\documentclass[11pt,reqno]{amsart}
\usepackage[margin=1.0in]{geometry}
\usepackage[utf8]{inputenc}
\usepackage[T1]{fontenc}
\usepackage{lmodern}
\usepackage[english]{babel}
\usepackage{amsmath,a4wide}
\usepackage{xfrac}
\usepackage{esint}
\usepackage{bbm}
\usepackage{mathrsfs,bm,amsthm,mathtools,yfonts,amssymb,color,braket,booktabs,graphicx,graphics,amsfonts,comment,siunitx}
\usepackage{url}
\usepackage[square,numbers]{natbib}
\usepackage{hyperref}
\usepackage{cleveref}

\newcommand{\R}{\mathbb{R}}

\newcommand{\N}{\mathbb{N}}
\newcommand{\V}{\mathbb{V}}

\newcommand{\lv}{\lvert}
\newcommand{\rv}{\rvert}
\newcommand{\lV}{\lVert}
\newcommand{\rV}{\rVert}

\DeclareMathOperator{\spt}{spt}

\DeclareMathOperator{\tr}{tr}
\DeclareMathOperator{\dist}{dist}
\DeclareMathOperator{\dv}{div}
\DeclareMathOperator{\diam}{diam}

\theoremstyle{plain}
\newtheorem{theorem}{Theorem}[section]
\newtheorem*{theorem*}{Theorem}
\newtheorem{lemma}[theorem]{Lemma}
\newtheorem*{lemma*}{Lemma}
\newtheorem{prop}[theorem]{Proposition}
\newtheorem*{prop*}{Proposition}
\newtheorem{corollary}[theorem]{Corollary}
\newtheorem*{corollary*}{Corollary}
\newtheorem{assumption}[theorem]{Assumption}
\newtheorem*{assumption*}{Assumption}
\theoremstyle{definition}

\newtheorem*{example*}{e.g.}
\newtheorem{remark}[theorem]{Remark}
\newtheorem*{remark*}{Remark}
\numberwithin{equation}{section}
\newtheorem{definition}[theorem]{Definition}
\newtheorem*{definition*}{Definition}

\title{Varifold convergence of free boundary Allen--Cahn equation}

\keywords{Allen--Cahn equation, Free boundary, Geometric measure theory}

\subjclass[2020]{35R35, 35N25, 53A10}

\author{Jingeon An}

\address{Department of Mathematics and Computer Science, University of Basel, Spiegelgasse 1, 4052 Basel, Switzerland}

\email{jingeon.an@icloud.com}

\author{Kiichi Tashiro}

\address{Department of Mathematics, Institute of Science Tokyo, 2-12-1, Ookayama, Meguro-ku, Tokyo, 152-8551, Japan}

\email{tashiro.k.0e2f@m.isct.ac.jp}

\begin{document}
	
\begin{abstract}
The free boundary Allen--Cahn equation
    \[
        \left\{
        \begin{alignedat}{2}
            \Delta u&=0\quad&&\text{in}\quad\{|u|<1\}\\
            |\nabla u|&=1/\varepsilon\quad&&\text{on}\quad\partial\{|u|<1\},
        \end{alignedat}
        \right.
    \]
has recently attracted considerable attention because it retains the essential features of the classical Allen--Cahn equation while being significantly more tractable.

In this work, we establish the free boundary analogue of the seminal Hutchinson--Tonegawa theory, developing the varifold convergence framework for solutions of the free boundary Allen--Cahn equation to minimal surfaces. In addition, we provide the $\Gamma$-convergence of the free boundary Allen--Cahn energy to the area functional, and the conservation of local minimization property. This foundation is expected to be used in further applications of the free boundary Allen--Cahn equation in the study of minimal surfaces, such as providing an alternative proof of celebrated Yau's conjecture, possibly with simpler and more complete arguments.
\end{abstract}
	
	\maketitle
	
\section{Introduction}

In this work, we study the free boundary Allen--Cahn equation
\[
    \left\{
        \begin{alignedat}{2}
            \Delta u&=0\quad&&\text{in}\quad\{|u|<1\}\\
            |\nabla u|&=1/\varepsilon\quad &&\text{on}\quad\partial\{|u|<1\}.
        \end{alignedat}
    \right.
\]
This overdetermined problem is derived from Ginzburg-Landau energy functional
\[
    J_\varepsilon(u):=\int_{\Omega}\left(\varepsilon|\nabla u|^2+\frac{\chi_{(-1,1)}(u)}{\varepsilon}\right)\,dx,
\]
where $\varepsilon>0$ is a small parameter determining the thickness of the interface $\{|u|<1\}$. In its classical form, the Allen--Cahn equation is given by
\begin{equation}\label{eq: intro, AC}
    \Delta u(x)=W'(u)/\varepsilon^2,
\end{equation}
where $W:[-1,1]\rightarrow\mathbb{R}$ is the double--well potential that attains $0$ at $\pm 1$, and strictly positive in $(-1,1)$. Prominent examples of such potentials \( W \) are given by the family of functions $\{W_{\delta}\}_{0\le \delta\le 2}$, 
\[ W_\delta(u) :=
\begin{cases}
(1 - u^2)^\delta &\qquad\text{for}\quad 0 < \delta \le 2,\\
\chi_{(-1,1)}(u)& \qquad \text{for}\quad \delta = 0,
\end{cases}\]
which give rise to the parametrized set of energy functionals
\begin{equation} \label{eq:energy-functional}
    J^\delta_{\varepsilon}(u;\Omega) := \int_\Omega \left( \varepsilon |\nabla u|^2 + \frac{ W_\delta(u) }{\varepsilon}\right) \, dx,\quad\text{for}\quad \delta\in [0, 2].
\end{equation}

The Allen--Cahn equation was first introduced by the physical motivation in \cite{allen1972ground,allen1973correction}, to describe phase transition models such as binary alloy. However, it is also of great interest from a mathematical perspective, due to its property that closely resembles minimal surfaces (see also \cite{caffarelli1995uniform,chodosh2020minimal}). More precisely, solutions of \eqref{eq: intro, AC} resemble minimal surfaces in the sense that, as $\varepsilon\rightarrow 0$, the level surfaces of $u$ converge to a minimal surface in a suitable sense. 

From the foundational $L_\text{loc}^1$ convergence result of Modica \cite{modica1987gradient}, numerous notions of convergence have been studied. For example, in seminal paper of Hutchinson--Tonegawa \cite{hutchinson2000convergence}, they constructed the convergence result in varifold setting. Then, in 2006, Caffarelli--Cordoba proved uniform $C^{1,\alpha}$ convergence of the interfaces for all $\delta\in [0,2]$, under the assumption that each level surfaces are locally uniformly Lipschitz graphs. In 2019, Wang and Wei \cite{wang2019finite,wang2015structure} extended this to uniform $C^{2,\alpha}$ result for $\delta=2$, with the sheet separation result in $n=2$, and this result was further extended to manifold setting and $n=3$ by Chodosh--Mantoulidis \cite{chodosh2020minimal}. For the free-boundary version $\delta=0$, this uniform $C^{2,\alpha}$ result in general manifold setting with arbitrary dimension was followed by the first author \cite{an2025second}. 

\subsection{Free boundary Allen--Cahn}
Similar to the classical $\delta=2$ case, one can expect similar phase transition phenomena for $\delta\in [0,2)$ case, which leads to free boundary problems, already introduced by Caffarelli and Cordoba \cite{caffarelli1995uniform}. They were studied, for example, in \cite{du2022four,kamburov2013free,liu2018free,valdinoci2004plane,valdinoci2006flatness,wang2015structure}. Most interesting case in this free-boundary generating Allen--Cahn equations is the other extremity, i.e. $\delta=0$. In this setting, we obtain the indicator potential $\chi_{(-1,1)}(u)$. By perturbing the energy functional $J_\varepsilon^0(u;\Omega)$, any critical point $u$ of $J_\varepsilon^0$ satisfies the following free boundary problem in the viscosity sense (see \cite{de2009existence}): 
\begin{equation}\label{eq:FBAC}
    \left\{
        \begin{alignedat}{2}
            \Delta u&=0&\quad&\text{in}\quad\{|u|<1\}\\
            |\nabla u|&=1/\varepsilon&\quad&\text{on}\quad\partial\{|u|<1\}.
        \end{alignedat}
    \right.
\end{equation}
To provide a geometric illustration of the problem, consider a band with a width comparable to $2\varepsilon$. This band is composed of transition layers of $u$, i.e., $\{|u|<1\}$, where $u$ is harmonic inside. 

Mathematically speaking, the free boundary Allen--Cahn equation \eqref{eq:FBAC} enjoys a simpler structure than that of the classical Allen--Cahn equation. The ambient function $u$ is harmonic in the transition layers, and all the nonlinearities are ``concentrated'' as the free boundary condition $|\nabla u|=1/\varepsilon$. It also enjoys the property that separate interfaces (that is to say, the interfaces that are separated by free boundaries) do not interact with each other, unlike the classical Allen--Cahn equation. Therefore, a free boundary Allen--Cahn model can also serve as a natural framework for approximating minimal surfaces, that may be more tractable, in some cases, than the classical Allen--Cahn equation.
 
Due to its structural simplicity, recently the free boundary Allen--Cahn equation has attracted considerable attention, and produced new or stronger results, that are still open for the classical Allen--Cahn equation counterpart. Recently, in \cite{an2025second}, the first author showed that the transition layers are uniformly $C^{2,\alpha}$, given they are locally Lipschitz graphs. In classical Allen--Cahn setting, this uniform $C^{2,\alpha}$ regularity requires finite Morse index or stability assumptions and dimensional restriction $n\leq 10$ (\cite{wang2019second}), due to the presence of interaction between different sheets. As noted above, the free boundary Allen--Cahn equation lacks this sheet-interaction, thus it could provide uniform $C^{2,\alpha}$ estimate in full generality, without stability assumption nor dimensional restriction. At the same time, a monumental achievement came for the long-standing (local) De Giorgi conjecture, from Chan, Fernandez-Real, Figalli, Serra \cite{chan2025global}. They proved the free boundary version stable De Giorgi conjecture for $n=3$, and as a corollary, the free boundary version monotone De Giorgi conjecture for $n=4$. This was the first time a local stable De Giorgi type conjecture was resolved in this higher dimension.

\subsection{Hutchinson-Tonegawa theory}

Apart from its purely intriguing properties, the Allen--Cahn equation showed a possibility to be a powerful tool to attack problems in minimal surface theory. One famous application of such is by Gaspar--Guaraco and Guaraco \cite{gaspar2018allen,guaraco2018min}, where they adapts the PDE approach to construct minimal surfaces, and as a result, provide an alternative proof of famous Yau's conjecture, in comparison to the Almgren--Pitts min-max construction \cite{irie2018density,song2023existence}. In this framework, minimal hypersurfaces arise as sharp-interface limits of solutions to a semilinear elliptic equation, with the Allen--Cahn energy playing the role of a diffuse area functional. Moreover, Chodosh--Mantoulidis \cite{chodosh2020minimal} used the limit interface of the Allen--Cahn equation to resolve multiplicity one conjecture for a special case, while the general answer came later by Zhou \cite{zhou2020multiplicity}.

As highlighted above, we may assume that for some applications, the free boundary Allen--Cahn equation can deliver simpler approach than that of the classical Allen--Cahn equation. However, in the free boundary Allen--Cahn equation, we do not yet have celebrated Hutchinson--Tonegawa theory \cite{hutchinson2000convergence}, which shows the solutions of the Allen--Cahn equation converge to a minimal surface in the varifold sense. Moreover, Hutchinson--Tonegawa theory does not immediately generalize to the free boundary case, as the original proofs are dependent on elliptic estimates with nonlinearity, whereas in the free boundary Allen--Cahn equation we have distribution-type nonlinearity, so standard theory does not apply. This varifold convergence is often needed for applications; in particular, it was an essential tool in two important applications mentioned above \cite{gaspar2018allen,guaraco2018min,chodosh2020minimal}.

In this paper, we aim to provide a foundational varifold convergence result of Hutchinson-Tonegawa, in the free boundary Allen--Cahn analogue. Precisely, we have the next theorem:  

\begin{theorem}
\label{thm: main-euclidean}
	Let $\Omega\subset\mathbb{R}^n$ be a smooth connected domain, and $ V_i $ be varifolds associated with solutions $ u_{ \varepsilon_i } $ of the free boundary Allen--Cahn equation as in Definition \ref{def: classical_sol}. Moreover, assume Assumption \ref{assm: energy}. Then, taking a subsequence if necessary, we have
	\[
		u_{ \varepsilon } \to u_0 \in BV ( \Omega ; \{ \pm 1 \} ) \text{ for a.e.}, \quad V_i \to V \text{ in the varifold sense, }
	\]
	and $ V $ is an ($ n - 1 $)-rectifiable varifold. Moreover, we have
	\begin{enumerate}
		\item For each $ \phi \in C_c ( \Omega ) $,
		\[
			\lV V \rV ( \phi ) = \lim_{ i \to \infty } \int_{ \{ \lv u_{ \varepsilon_i } \rv < 1 \} \cap \Omega } \phi \bigg( \varepsilon_i \lv \nabla u_{ \varepsilon_i } \rv^2 + \frac{ \chi_{ ( - 1  , 1 ) } ( u_{ \varepsilon_i } ) }{ \varepsilon_i } \bigg),
		\]
		and $ V $ is stationary, that is, $ \delta V = 0 $.
		\item $ \spt \lv \partial \{ u_0 = 1 \} \rv \subset \spt \lV V \rV $, and $ u_{ \varepsilon_i } $ converges locally uniformly to $ \pm 1 $ on $ \Omega \setminus \spt \lV V \rV $.
		\item For each $ \tilde{ \Omega } \subset \subset \Omega $, $ \{ \lv u_{ \varepsilon_i } \rv < 1 \} \cap \tilde{ \Omega } $ converges to $ \spt \lV V \rV \cap \tilde{ \Omega } $ in the Hausdorff distance sense.
        \item Furthermore, if we assume $\|D^2u_\varepsilon\|_{L^\infty}\leq C\varepsilon^{-2}$ is uniformly bounded in terms of $\varepsilon$, then the limit varifold $V$ is integral, and the density $ \theta = 4 N $ of $ V $ satisfies
        \[
            N ( x ) = \begin{cases}
                \text{ odd } &\mathcal{ H }^{ n - 1 }\text{-}a.e.\,x \in \partial^* \{ u_0 = 1 \},\\
                \text{ even } &\mathcal{ H }^{ n - 1 }\text{-}a.e.\,x \in \Omega \setminus \partial^* \{ u_0 = 1 \}.
            \end{cases}
        \]
	\end{enumerate}
\end{theorem}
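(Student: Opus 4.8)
The plan is to follow the architecture of Hutchinson--Tonegawa, replacing the PDE monotonicity-formula input by its free boundary analogue. The structure breaks into: (i) pass to the limit to get $u_0\in BV(\Omega;\{\pm1\})$ and a limit varifold $V$; (ii) prove a monotonicity formula for $\|V_i\|$ with the usual error terms controlled by the ``discrepancy'' $\xi_i := \varepsilon_i|\nabla u_{\varepsilon_i}|^2 - \chi_{(-1,1)}(u_{\varepsilon_i})/\varepsilon_i$; (iii) show $\xi_i \to 0$ as measures, which gives rectifiability of $V$ via Allard/stationarity; (iv) deduce stationarity $\delta V=0$; (v) establish the clean-up statements (2) and (3); and (vi), under the $C\varepsilon^{-2}$ Hessian bound, upgrade to integrality and the odd/even density dichotomy.

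\textbf{Steps (i)--(ii): compactness and monotonicity.} First I would use Assumption~\ref{assm: energy} (local energy bounds) to extract, after a subsequence, weak-$*$ limits $\mu = \lim \frac{1}{2}\|V_i\|$ (I will keep track of the constant $4$ appearing in $\theta=4N$, which comes from $\int_{-1}^1\sqrt{2W_0}$-type normalization for $\delta=0$, here the jump of $u$ across a layer) and $V=\lim V_i$; the $BV$ convergence of $u_{\varepsilon_i}$ to $u_0\in BV(\Omega;\{\pm1\})$ is the free boundary Modica--Mortola estimate, which for $\delta=0$ is elementary since $u$ is harmonic between the walls and $|\nabla u|=1/\varepsilon$ on them. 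The key technical device is the monotonicity formula: since $u$ is harmonic in $\{|u|<1\}$, testing the stationarity of the energy against radial vector fields gives, for the density ratio $\frac{\|V_i\|(B_r(x))}{r^{n-1}}$, an almost-monotonicity with error governed by $\int \frac{\xi_i^+}{r^{n-1}}$ and by boundary terms on $\partial\{|u_{\varepsilon_i}|<1\}$ where $|\nabla u|=1/\varepsilon$. This is where the distributional nature of the nonlinearity must be handled carefully: instead of an interior Euler--Lagrange equation one uses the first variation of $J^0_{\varepsilon_i}$, i.e. $u$ harmonic plus the free boundary (Pohozaev/Rellich-type) identity from \cite{an2025second, de2009existence}.

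\textbf{Steps (iii)--(v): rectifiability, stationarity, topology of the limit.} The heart is the vanishing of the discrepancy measure: $\xi_i \to 0$ in $C_c(\Omega)^*$. In the classical case this follows from Modica's gradient bound $\varepsilon|\nabla u|^2 \le W(u)/\varepsilon$; in the free boundary case I expect the analogous one-sided bound to come from the maximum principle applied to $|\nabla u|^2$ (subharmonic where $u$ is harmonic) together with the boundary value $|\nabla u|=1/\varepsilon$ on $\partial\{|u|<1\}$, so that $\varepsilon|\nabla u|^2 \le 1/\varepsilon = \chi_{(-1,1)}/\varepsilon$ on the transition band, hence $\xi_i\le 0$ there and $=0$ outside. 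Combined with the energy identity this forces $\|\xi_i\| \to 0$. Then the limit $V$ has bounded first variation, density bounded below (monotonicity), and zero discrepancy; Allard's rectifiability theorem gives that $V$ is $(n-1)$-rectifiable, and the first variation vanishes because the only surviving term in $\delta V_i(g)$ after the harmonic/free-boundary cancellation is $\int \mathrm{div}\,g\, \xi_i \to 0$, giving (1). Statement (2) follows from $\|Du_0\| \le \mu \ll \|V\|$ (lower semicontinuity of $BV$ norm under the layer-to-jump convergence) and from the locally uniform convergence to $\pm1$ on $\Omega\setminus\spt\|V\|$, which is a consequence of the density lower bound: where the energy density vanishes, the band $\{|u_{\varepsilon_i}|<1\}$ must be empty for large $i$ on compact sets, and $u$ harmonic with $|u|=1$ on the boundary of its domain is locally constant — this also gives the Hausdorff convergence (3) via the upper bound from the density ratio and the lower bound from the energy-density positivity.

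\textbf{Step (vi): integrality and the parity of density.} Under the uniform bound $\|D^2 u_{\varepsilon_i}\|_{L^\infty} \le C\varepsilon_i^{-2}$ I would run the blow-up analysis at $\mathcal H^{n-1}$-a.e.\ point of $\spt\|V\|$: rescaling $u_{\varepsilon_i}(x_0 + \varepsilon_i y)$ produces, in the limit, a one-dimensional profile that is the ``heteroclinic'' solution of the $\delta=0$ problem, namely a piecewise-linear function sweeping from $-1$ to $1$ across a slab of width $2\varepsilon$ — the Hessian bound is exactly what prevents oscillation and guarantees the blow-up is a (locally finite) union of parallel such slabs, each contributing energy $4$ per unit area; hence $\theta = 4N$ with $N\in\mathbb N$, i.e.\ $V$ is integral. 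The parity dichotomy then comes from the $BV$ theory: across $\partial^*\{u_0=1\}$ the function $u_0$ jumps, which forces an odd number of layers (net orientation change), whereas away from $\partial^*\{u_0=1\}$ the net jump is zero, forcing an even number; this is the same homological bookkeeping as in \cite{hutchinson2000convergence}, now applied to the free boundary profile. I expect the main obstacle to be step (iii), the discrepancy vanishing: one must make rigorous that $|\nabla u|^2$ genuinely satisfies the right differential inequality and boundary condition despite the free boundary being only as regular as $C^{2,\alpha}$ (and a priori worse before \cite{an2025second} is invoked), and that no energy concentrates on the free boundary itself in the limit — equivalently, that the boundary measures appearing in the monotonicity error terms are absolutely continuous with respect to $\|V\|$ with the correct constant. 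Controlling these boundary terms uniformly in $\varepsilon$ is the crux, and is where the structural simplicity (no sheet interaction, harmonicity) of the free boundary model is leveraged most heavily.
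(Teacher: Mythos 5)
Your architecture matches the paper's at the level of headings (BV compactness, Modica inequality, radial monotonicity, discrepancy vanishing, Allard rectifiability, blow-up with Hessian bound for integrality), but there is a genuine gap in the middle of your argument that the paper explicitly addresses and that you gloss over: \emph{how to obtain the positive lower bound on the density of $\mu$ at points of $\spt\lV V\rV$.}

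You attribute the lower density bound to ``monotonicity.'' But the monotonicity formula (Lemma \ref{lem: monotonicity}) only tells you that $r\mapsto r^{1-n}\mu_i(B_r(x))$ is non-decreasing; it does not by itself produce a uniform positive lower bound as $r\to 0$. You need a \emph{seed}: a uniform lower bound on the energy in an $\varepsilon$-scale ball around points where $|u_{\varepsilon_i}|<1$. In the classical Allen--Cahn case this seed is obtained from the maximum principle, which forces the transition layer to concentrate on $\{|u|<\alpha\}$ for an explicit $\alpha<1$. As the paper points out in Remark \ref{rem: differece with HT}, that argument fails outright in the free boundary setting because $u$ is not $C^1$ across the free boundary, and the substitute is the \emph{clean ball property} of the one-phase Bernoulli problem (Lemma \ref{lem: nondegeneracy}, imported from \cite{chan2025global}): if the positivity set has small measure in $B_{2\rho}$ then it is empty in $B_{\rho}$. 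Combined with the $L^1$-to-$L^\infty$ interpolation (Proposition \ref{prop: interpolation L1 Lip}) this gives, after rescaling to unit scale, a uniform lower bound on $\mathcal L^n(\{|u_{\varepsilon_i}|<1\}\cap B_1)$, which feeds back through the monotonicity formula into the density lower bound (Proposition \ref{prop: density}). This non-degeneracy input is what makes the free boundary theory work, and your proposal does not identify any replacement for it.

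A second, related soft spot: you claim the vanishing of the discrepancy measure follows from the Modica one-sided bound $\xi_i\le 0$ ``combined with the energy identity.'' In the paper the Modica inequality gives the sign, but $|\xi|\to 0$ is obtained by a separate contradiction argument (Proposition \ref{prop: dicrepancy vanishes}): assuming $|\xi|(B_r(x))\ge br^{n-1}$ and integrating the monotonicity identity radially produces a logarithmic divergence contradicting the finite upper density; then the positive \emph{lower} density bound from Proposition \ref{prop: density} converts $\liminf_r |\xi|(B_r)/\mu(B_r)=0$ into $|\xi|\equiv 0$. So the density lower bound is used twice — both to get rectifiability and to kill the discrepancy — and without it your chain of implications does not close. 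Your sketch of the integrality step is morally aligned with the paper's (the Hessian bound is used in Proposition \ref{prop: flatness} to get $C^1$-closeness to a one-dimensional slab profile, and the odd/even dichotomy is the usual homological counting), but even there the paper needs the quantitative sheet-separation machinery (Lemmas \ref{lem: vartically monotonicity formula} and \ref{lem: sheet separation 2}) to compare $\theta$ with $4N$; ``run the blow-up analysis'' understates the work required.
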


\begin{remark}
    Notice that unlike the classical Allen--Cahn case, we require the uniform $C^2$ bound $\|D^2u_\varepsilon\|_{L^\infty}\leq C\varepsilon^{-2}$, to have the integrality of the limit varifold $V$. This assumption is essential due to the fact that there are no known $C^2$ regularity results on the solutions (without stability or finite Morse index) of the free boundary Allen--Cahn equation. A priori, complicated free boundary behavior can occur, and without having a uniform $C^2$ estimate, we may have concentrated energy around the free boundary, which hinders the proof of the integrality of the limit varifold $V$. This is in sharp contrast to the classical Allen--Cahn equation, where one can immediately prove such a uniform $C^2$ estimate using standard elliptic estimates, and no such ``concentration of energy at the boundary'' can occur. Proving such a uniform $C^2$ estimate for the free boundary Allen--Cahn equation will be an interesting problem.
\end{remark}

Surprisingly, the proof of Theorem \ref{thm: main-euclidean} is not just a straightforward adjustment from the classical Allen--Cahn equation \cite{hutchinson2000convergence}, due to the presence of the free boundary, where the solution $u$ fails to be a $C^1$ function. Due to this presence of the free boundary, we cannot directly use the standard toolkit in elliptic PDEs, and this difficulty is highlighted in the proof of the integrality of the limit varifold (see Section \ref{seq: Integrality}). Therefore, we present novel arguments that rely on the properties of free boundary equations in the context of the Bernoulli problem.

In addition to the varifold convergence, we also provide $\Gamma$-convergence of the free boundary Allen--Cahn energy functional, which originally given by Modica and Mortola \cite{mortola1977esempio}:

\begin{theorem}[$\Gamma$-convergence]
\label{thm: Gamma convergence}
    For $ J_\varepsilon $, we have the following:
    \begin{enumerate}
        \item For all $ \{ u_{ \varepsilon } \}_{ \varepsilon > 0 } \subset L^1 ( \Omega ; [ - 1 , 1 ] ) $ and $ u \in L^1 ( \Omega ; [ - 1 , 1 ] ) $ such that $u_\varepsilon\xrightarrow{L^1(\Omega)}u$, we have
        \[
            J_0 ( u ) \leq \liminf_{ \varepsilon \to + 0 } J_\varepsilon ( u_{ \varepsilon } ).
        \]
        \item For all $ u \in L^1 ( \Omega ; [ - 1 , 1 ] ) $, there exists a sequence $ \{ u_{ \varepsilon } \}_{ \varepsilon > 0 } \subset L^1 ( \Omega ; [ - 1 , 1 ] ) $ such that it satisfies $u_\varepsilon\xrightarrow{L^1(\Omega)}u$, and
        \[
            J_0 ( u ) \geq \limsup_{ \varepsilon \to + 0 } J_\varepsilon ( u_{ \varepsilon } ).
        \]
    \end{enumerate}
\end{theorem}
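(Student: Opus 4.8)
Throughout, recall that the $\Gamma$-limit candidate is the (relaxed) area functional
$J_0(u)=4\,\mathcal H^{n-1}\big(\partial^*\{u=1\}\cap\Omega\big)=2|Du|(\Omega)$ for $u\in BV(\Omega;\{\pm1\})$ and $J_0(u)=+\infty$ otherwise, where the constant $4=2\int_{-1}^{1}\sqrt{\chi_{(-1,1)}(s)}\,ds$ is exactly the normalization that appears as the density $4N$ in Theorem~\ref{thm: main-euclidean}; by convention $J_\varepsilon(v)=+\infty$ unless $v\in W^{1,2}(\Omega;[-1,1])$. The plan is to prove the two inequalities separately. For the lower bound (1), the soft half, I would dispose of the trivial case $\liminf_\varepsilon J_\varepsilon(u_\varepsilon)=+\infty$ and then pass to a subsequence with $J_\varepsilon(u_\varepsilon)\le C$. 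The potential term gives $\int_\Omega(1-|u_\varepsilon|)\le\int_\Omega\chi_{(-1,1)}(u_\varepsilon)\le\varepsilon C\to0$, so $1-|u_\varepsilon|\to0$ in $L^1$, which together with $u_\varepsilon\to u$ in $L^1$ forces $|u|=1$ a.e. For the gradient, Young's inequality and the fact that $\chi_{(-1,1)}$ is $\{0,1\}$-valued give the pointwise bound
\[
\varepsilon|\nabla u_\varepsilon|^2+\frac{\chi_{(-1,1)}(u_\varepsilon)}{\varepsilon}\ \ge\ 2|\nabla u_\varepsilon|\sqrt{\chi_{(-1,1)}(u_\varepsilon)}\ =\ 2|\nabla u_\varepsilon|\,\chi_{(-1,1)}(u_\varepsilon),
\]
and since $\nabla u_\varepsilon=0$ a.e. on $\{|u_\varepsilon|=1\}$ one may drop the indicator, so $2\int_\Omega|\nabla u_\varepsilon|\le J_\varepsilon(u_\varepsilon)$. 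Lower semicontinuity of the total variation under $L^1$-convergence then yields $u\in BV(\Omega;\{\pm1\})$ and $2|Du|(\Omega)\le\liminf_\varepsilon J_\varepsilon(u_\varepsilon)$, i.e. $J_0(u)\le\liminf_\varepsilon J_\varepsilon(u_\varepsilon)$.

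For the recovery sequence (2), if $u\notin BV(\Omega;\{\pm1\})$ then $J_0(u)=+\infty$ and $u_\varepsilon\equiv u$ works, so assume $u\in BV(\Omega;\{\pm1\})$ and set $E=\{u=1\}$. I would first treat the case in which $\partial E$ is, near $\overline\Omega$, a smooth hypersurface with $\mathcal H^{n-1}(\partial E\cap\partial\Omega)=0$. Let $d$ be the signed distance to $\partial E$ and put $u_\varepsilon:=q_\varepsilon\circ d$ with the truncated linear profile $q_\varepsilon(t):=\max\{-1,\min\{1,t/\varepsilon\}\}$ — this is the optimal one-dimensional transition for the indicator potential, with transition length $\varepsilon$. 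Then $u_\varepsilon\to u$ in $L^1$, $u_\varepsilon$ is Lipschitz, $\{|u_\varepsilon|<1\}=\{|d|<\varepsilon\}$ up to null sets, and $|\nabla u_\varepsilon|=1/\varepsilon$ a.e. on that set since $|\nabla d|=1$ a.e., so
\[
J_\varepsilon(u_\varepsilon)=\Big(\varepsilon\cdot\tfrac1{\varepsilon^2}+\tfrac1\varepsilon\Big)\,\big|\{x\in\Omega:|d(x)|<\varepsilon\}\big|=\frac2\varepsilon\,\big|\{|d|<\varepsilon\}\cap\Omega\big|.
\]
By the tubular neighbourhood theorem $\varepsilon^{-1}|\{|d|<\varepsilon\}\cap\Omega|\to2\mathcal H^{n-1}(\partial E\cap\Omega)$ (the contribution near $\partial\Omega$ being of order $\varepsilon$), hence $J_\varepsilon(u_\varepsilon)\to4\mathcal H^{n-1}(\partial E\cap\Omega)=J_0(u)$; in particular $\limsup_\varepsilon J_\varepsilon(u_\varepsilon)\le J_0(u)$.

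For an arbitrary $u\in BV(\Omega;\{\pm1\})$ the signed-distance construction cannot be used directly, because the $\varepsilon$-tube around a merely rectifiable reduced boundary need not have measure asymptotic to $2\varepsilon\,\mathcal H^{n-1}(\partial^*E)$ — the Minkowski content can strictly exceed the perimeter — and this approximation step is the only genuine obstacle in the argument. To get around it I would extend $u$ to a $BV$ function on a neighbourhood of $\overline\Omega$ so that $|Du|$ charges no mass on $\partial\Omega$, mollify $\chi_E$, and apply the coarea formula together with Sard's theorem to select super-level sets $E_k$ with smooth boundary such that $u_k:=2\chi_{E_k}-1\to u$ in $L^1(\Omega)$ and $J_0(u_k)\to J_0(u)$. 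Since the $\Gamma$-upper limit $u\mapsto\Gamma\text{-}\limsup_{\varepsilon\to0}J_\varepsilon(u)$ is $L^1$-lower semicontinuous and, by the previous step, is bounded above by $J_0$ on the smooth class, we obtain $\Gamma\text{-}\limsup_\varepsilon J_\varepsilon(u)\le\liminf_k\Gamma\text{-}\limsup_\varepsilon J_\varepsilon(u_k)\le\liminf_k J_0(u_k)=J_0(u)$, which is precisely the existence of a recovery sequence for $u$ (equivalently, one diagonalizes the double sequence $u_{k,\varepsilon}$ by hand). I expect this smoothing-and-diagonalization passage, together with the bookkeeping ensuring no perimeter mass escapes to $\partial\Omega$, to be where essentially all the care lies; by contrast, because the indicator potential makes the one-dimensional profile genuinely piecewise linear, the sharp constant $4$ falls out of the elementary tube computation above and no analysis of the profile ODE $q''=W'(q)$ — the technical core of classical Modica–Mortola — is required here.
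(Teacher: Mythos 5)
Your proposal is correct and follows essentially the same route as the paper: the lower bound is Young's inequality combined with lower semicontinuity of the total variation (the paper's inequality~\eqref{eq: AMGM for J}), and the recovery sequence is the truncated linear profile of the signed distance to a smooth approximating boundary, followed by diagonalization — the paper delegates the smooth approximation step to \cite[Lemmas 1--2]{sternberg1988effect} (quoted as Lemma~\ref{lem: smooth appro BV}) whereas you sketch the mollification/coarea/Sard argument, but these are the same idea.
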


Provided with the $\Gamma$-convergence, we show that local minimization property is preserved under the limit $\varepsilon\rightarrow 0$. This implies that the limiting varifold is locally minimizing, and thus smooth apart from a set of measure with Hausdorff dimension $n-8$ (see \cite{simon1983lectures}).

\begin{theorem}
\label{thm: min to min}
	Let $ \{ u_{ \varepsilon_i } \}_{ i = 1 }^{ \infty } \subset W^{ 1 , 2 } ( \Omega ; [ - 1 , 1 ] ) $ with $ \varepsilon_i \to 0 $ as $ i \to \infty $ be such that
	\begin{enumerate}
		\item satisfying Assumption \ref{assm: energy}, i.e. $ J_{\varepsilon_i} ( u_{ \varepsilon_i } ) \leq E_0 < \infty $ for all $ i $,
		\item there exists $ c > 0 $ such that $ J_{\varepsilon_i} ( u_{ \varepsilon_i } ) \leq J_{ \varepsilon_i } ( \tilde{ u } ) $ for all $ \tilde{ u } \in W^{ 1 , 2 } ( \Omega ; [ - 1 , 1 ] ) $ with $ \int_\Omega \lv u_{ \varepsilon_i } - \tilde{ u } \rv < c $.
	\end{enumerate}
	Let $ u_0 $ be as in Theorem \ref{thm: main-euclidean}. Then, $ u_0 $ satisfies that, for any $ \tilde{ u } \in BV ( \Omega ; \{ \pm 1 \} ) $ with $ \int_\Omega \lv u_0 - \tilde{ u } \rv < c $, $ J_0 ( u_0 ) \leq J_0 ( \tilde{ u } ) $ and $ \spt \lV V \rV = \spt \lv \partial \{ u_0 = 1 \} \rv $ hold. Additionally, if we have the $C^2$ uniform estimate $\|D^2u_{\varepsilon_i}\|_{L^\infty}\leq C\varepsilon_i^{-2}$, then $ 4^{ - 1 } \lV V \rV = \lv \partial \{ u_0 = 1 \} \rv $.
\end{theorem}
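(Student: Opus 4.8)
The plan is to play the two halves of Theorem~\ref{thm: Gamma convergence} against the local minimality hypothesis, and then read off everything about $\lV V\rV$ from the identification of energy measures in Theorem~\ref{thm: main-euclidean}(1). First note that each $u_{\varepsilon_i}$, being a critical point of $J^0_{\varepsilon_i}$, solves \eqref{eq:FBAC} by \cite{de2009existence}, so Theorem~\ref{thm: main-euclidean} applies: along a subsequence $u_{\varepsilon_i}\to u_0$ in $L^1_{\mathrm{loc}}(\Omega)$ (using $\lv u_{\varepsilon_i}\rv\le 1$), $V_i\to V$, and the energy measures $d\mu_i:=(\varepsilon_i\lv\nabla u_{\varepsilon_i}\rv^2+\varepsilon_i^{-1}\chi_{(-1,1)}(u_{\varepsilon_i}))\,dx$ converge weakly-$*$ to $\lV V\rV$ (restricting to $\{\lv u_{\varepsilon_i}\rv<1\}$ in Theorem~\ref{thm: main-euclidean}(1) changes nothing, since $\nabla u_{\varepsilon_i}=0$ a.e.\ on $\{\lv u_{\varepsilon_i}\rv=1\}$). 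I will use $J_0(v)=4\lv\partial\{v=1\}\rv(\Omega)$ for $v\in BV(\Omega;\{\pm1\})$.

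I would first show $u_0$ is $J_0$-local-minimizing. Fix $\tilde u\in BV(\Omega;\{\pm1\})$ with $\int_\Omega\lv u_0-\tilde u\rv<c$; we may assume $J_0(\tilde u)<\infty$, and choose $\eta>0$ with $\int_\Omega\lv u_0-\tilde u\rv<c-2\eta$. By Theorem~\ref{thm: Gamma convergence}(2) pick a recovery family $\tilde u_\varepsilon\to\tilde u$ in $L^1(\Omega)$ with $\limsup_{\varepsilon\to0}J_\varepsilon(\tilde u_\varepsilon)\le J_0(\tilde u)$; then $J_{\varepsilon_i}(\tilde u_{\varepsilon_i})<\infty$ for large $i$, so $\tilde u_{\varepsilon_i}\in W^{1,2}(\Omega;[-1,1])$, and for large $i$
\[
\int_\Omega\lv u_{\varepsilon_i}-\tilde u_{\varepsilon_i}\rv\le\int_\Omega\lv u_{\varepsilon_i}-u_0\rv+\int_\Omega\lv u_0-\tilde u\rv+\int_\Omega\lv\tilde u-\tilde u_{\varepsilon_i}\rv<\eta+(c-2\eta)+\eta=c,
\]
so $\tilde u_{\varepsilon_i}$ is admissible in hypothesis~(2), whence $J_{\varepsilon_i}(u_{\varepsilon_i})\le J_{\varepsilon_i}(\tilde u_{\varepsilon_i})$. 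Taking $\liminf_i$ and applying the $\Gamma$-$\liminf$ inequality (Theorem~\ref{thm: Gamma convergence}(1)) on the left,
\[
J_0(u_0)\le\liminf_{i}J_{\varepsilon_i}(u_{\varepsilon_i})\le\limsup_{i}J_{\varepsilon_i}(\tilde u_{\varepsilon_i})\le J_0(\tilde u).
\]

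Next I would pin down $\lV V\rV$. Running the $\Gamma$-$\liminf$ inequality on each ball $B\subset\subset\Omega$ with $\lV V\rV(\partial B)=0$ (legitimate since $u_{\varepsilon_i}\to u_0$ in $L^1(B)$), and using that then $\lim_i\mu_i(B)=\lV V\rV(B)$, gives $4\lv\partial\{u_0=1\}\rv(B)\le\lim_iJ_{\varepsilon_i}(u_{\varepsilon_i};B)=\lV V\rV(B)$; since such $B$ exhaust $\Omega$, $4\lv\partial\{u_0=1\}\rv\le\lV V\rV$ as Radon measures, in particular $\spt\lv\partial\{u_0=1\}\rv\subset\spt\lV V\rV$ (consistent with Theorem~\ref{thm: main-euclidean}(2)). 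For the opposite bound, take a recovery family $w_\varepsilon\to u_0$ in $L^1(\Omega)$ for $u_0$ itself; as above $w_{\varepsilon_i}\in W^{1,2}(\Omega;[-1,1])$ and $\int_\Omega\lv u_{\varepsilon_i}-w_{\varepsilon_i}\rv\to0<c$ for large $i$, so hypothesis~(2) gives $J_{\varepsilon_i}(u_{\varepsilon_i})\le J_{\varepsilon_i}(w_{\varepsilon_i})$, and by lower semicontinuity of mass under weak-$*$ convergence,
\[
\lV V\rV(\Omega)\le\liminf_iJ_{\varepsilon_i}(u_{\varepsilon_i};\Omega)\le\limsup_iJ_{\varepsilon_i}(w_{\varepsilon_i};\Omega)\le J_0(u_0)=4\lv\partial\{u_0=1\}\rv(\Omega).
\]
Combined with the lower bound this forces $4\lv\partial\{u_0=1\}\rv(\Omega)=\lV V\rV(\Omega)<\infty$, so (one measure dominating the other with equal finite total mass) $\lV V\rV=4\lv\partial\{u_0=1\}\rv$ and therefore $\spt\lV V\rV=\spt\lv\partial\{u_0=1\}\rv$; for unbounded $\Omega$ one localizes, running the scheme on $\tilde\Omega\subset\subset\Omega$ with competitors cut off to agree with $u_{\varepsilon_i}$ near $\partial\tilde\Omega$, and exhausts $\Omega$. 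The $C^2$ bound in the last clause enters via Theorem~\ref{thm: main-euclidean}(4): it gives that $V$ is in addition integral, so the mass identity above identifies $V$ as $4$ times the multiplicity-one varifold carried by $\partial^*\{u_0=1\}$ (equivalently $4^{-1}\lV V\rV=\lv\partial\{u_0=1\}\rv$), and it is exactly this uniform bound that excludes the concentration of energy along the free boundary that would otherwise obstruct the identification.

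The main difficulty I anticipate lies inside the recovery-sequence constructions and the cut-off gluings rather than in the bookkeeping above: the competitors must be $W^{1,2}$ and $[-1,1]$-valued, and — in contrast to the classical case — the term $\varepsilon^{-1}\chi_{(-1,1)}$ in $J_\varepsilon$ is acutely sensitive to wherever a competitor strays into the open interval $(-1,1)$ or toward the ``wrong'' endpoint, so that a naive interpolation between two functions that are merely $L^1$-close to $u_0$ can manufacture an $O(1)$ amount of spurious interface energy on the gluing region. Carrying this out carefully — selecting good gluing shells via the coarea formula, and matching the optimal transition profile instead of interpolating linearly — is the technical core, and it is precisely where the free-boundary nature of \eqref{eq:FBAC} (the failure of $u_{\varepsilon_i}$ to be $C^1$) makes the argument harder than in the Hutchinson--Tonegawa setting.
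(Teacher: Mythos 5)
Your proof of the local minimality of $u_0$ coincides with the paper's: pick a recovery sequence for $\tilde u$, verify admissibility in the local-minimality hypothesis for large $i$, and sandwich with the $\Gamma$-$\liminf$. Where you genuinely diverge is in the identification of $\lV V\rV$. The paper proceeds by a blow-up argument at a point $x\in\spt\lV V\rV$ with approximate tangent plane $T$: via Corollary~\ref{cor: uniform}, $u_{\varepsilon_i}$ converges locally uniformly to $\pm1$ on each side of $T$; if the limits agreed on both sides one could replace $u_{\varepsilon_i}$ by the constant $+1$ on a small ball, strictly lowering $J_{\varepsilon_i}$ and contradicting local minimality, so the signs differ and $\spt\lV V\rV=\spt\lv\partial\{u_0=1\}\rv$; then Proposition~\ref{prop: flatness} (hence the $C^2$ bound, via Theorem~\ref{thm: integrality}) pins the density to $4$. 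You instead run a pure mass comparison: the $\Gamma$-$\liminf$ inequality, localized to balls $B$ with $\lV V\rV(\partial B)=0$ (legitimate, since it rests only on the pointwise AM--GM $2\lv\nabla u\rv\le e_\varepsilon$ and lower semicontinuity of total variation on open sets), gives $4\lv\partial\{u_0=1\}\rv\le\lV V\rV$; a recovery sequence $w_{\varepsilon_i}\to u_0$ plugged into the local-minimality hypothesis and the weak-$*$ lower semicontinuity of mass give $\lV V\rV(\Omega)\le J_0(u_0)=4\lv\partial\{u_0=1\}\rv(\Omega)$; domination with equal finite total mass forces $\lV V\rV=4\lv\partial\{u_0=1\}\rv$ and hence also equality of supports, in one stroke. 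This is correct and, notably, nowhere uses the $C^2$ uniform bound: your argument therefore shows that in the locally-minimizing setting the hypothesis $\lV D^2u_{\varepsilon_i}\rV_{L^\infty}\le C\varepsilon_i^{-2}$ in the last clause of the theorem is superfluous, which is a genuine simplification relative to the paper's route through Proposition~\ref{prop: flatness}. Two small remarks: your closing paragraph about cut-off gluings is not needed here, since $\Omega$ is bounded in the paper's setup and the recovery sequences of Theorem~\ref{thm: Gamma convergence}(2) suffice as written; and it is cleaner to say explicitly that $4\lv\partial\{u_0=1\}\rv\le\lV V\rV$ extends from such balls to all Borel sets by outer regularity of Radon measures before invoking the mass identity.
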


With this Hutchinson--Tonegawa theory provided for the free boundary Allen--Cahn equation, we expect to be able to use free boundary Allen--Cahn equation to re-prove famous applications such as Gaspar--Guaraco and Guaraco \cite{gaspar2018allen,guaraco2018min} or Chodosh--Mantoulidis \cite{chodosh2020minimal}, possibly with much shorter arguments. These will remain as our future projects.

\subsection{Organization of the paper}

In Section \ref{sec: preliminaries}, we provide the definition and essential assumptions of the solution $u$ that we will use throughout the paper. Moreover, we provide the notion of varifold and show that one can associate a varifold to a solution of the Allen--Cahn equation. In Section \ref{sec: monotonicity}, we show the monotonicity formula of Hutchinson--Tonegawa theory. In Section \ref{sec: rec}, we provide the rectifiability of the limit varifold. In Section \ref{seq: Integrality}, we prove the integrality of the limit varifold. Finally, in Section \ref{sec: proofs of main}, we collect the results and complete the proofs of the main theorems.

\subsection*{Notations}

We write here a list of symbols used throughout the paper.

\begin{center}
\begin{tabular}{p{1.5cm}p{12cm}}
$\Omega$ & Open bounded and connected domain with smooth boundary in $\mathbb{R}^n$\\
$u_\varepsilon$ & Function as in Theorem \ref{thm: main-euclidean} \\
$J_\varepsilon$ & Energy defined by \eqref{eq: FBAC}\\
$B_r$ & $n$-dimensional ball with radius $r>0$ \\
$\nu$ & $\nabla u/|\nabla u|$, unit normal vector of level surfaces of $u_\varepsilon$\\
$V_i$ & Varifold associated to $u_{\varepsilon_i}$, see \eqref{eq: def of V i} \\
$V$ & Limit varifold as $\varepsilon_i\rightarrow 0$, see \eqref{eq: def of lim varifold}\\
$\mu_i$ & Radon measure associated to the energy, see \eqref{eq: def of mu i}\\
$\mu$ & Limit measure as $\varepsilon_i\rightarrow 0$, see \eqref{eq: def of lim Radon measure}\\
$e_i$ & Energy function of $u_{\varepsilon_i}$, see \eqref{eq: def of energy and discrepancy}\\
$\xi_i$ & Discrepancy of $u_{\varepsilon_i}$, see \eqref{eq: def of energy and discrepancy}\\
$\omega_n$ & Volume of $n$-dimensional unit ball in $\mathbb{R}^n$
\end{tabular}
\end{center}

\section{Preliminaries}
\label{sec: preliminaries}

Consider a smooth domain $\Omega\subset\mathbb{R}^n$. The free boundary Allen--Cahn energy $J_\varepsilon:=J_\varepsilon^0$ is defined by taking indicator potential in \eqref{eq:energy-functional}, i.e. $\delta=0$:
\begin{equation}
	\label{eq: FBAC}
	J_{ \varepsilon } ( u ) := \int_{ \Omega } \varepsilon \lv \nabla u \rv^2 + \frac{ \chi_{ ( - 1 , 1 ) } ( u ) }{ \varepsilon }.
\end{equation}

\subsection{Solution of Free Boundary Allen--Cahn Problem}
\begin{definition}
\label{def: stationary}
We call $ u : \Omega \to [ - 1 , 1 ] $ {\itshape stationary} of $ J_{ \varepsilon } $ if $ u $ satisfies
\begin{equation}
\label{eq: stationary}
	\delta J_{ \varepsilon } ( u ) [ g ] = 0
\end{equation}
for any smooth compactly supported vector field $g\in C_c^1(\Omega;\mathbb{R}^n)$, where the first variation $ \delta J_{ \varepsilon } ( u ) $ is defined by a linear functional
\begin{equation*}
		\delta J_{ \varepsilon } ( u ) [ g ] := \int_{ \{ \lv u \rv < 1 \} \cap \Omega } \bigg( - 2 \varepsilon \nabla u \cdot D g \nabla u + \varepsilon \lv \nabla u \rv^2 \dv g + \frac{ 1 }{ \varepsilon } \dv g \bigg).
\end{equation*}
\end{definition}

\begin{definition}
\label{def: classical_sol}
For $\Omega\subset\mathbb{R}^n$, we call $ u : \Omega \to [ - 1 , 1 ] $ {\itshape a classical solution} of $ J_{ \varepsilon } $ if $ u $ satisfies that
\begin{equation}
	\label{eq: Bernoulli_AC}
    \left\{
	\begin{alignedat}{3}
	    \Delta u &= 0 \quad&&\text{in} \quad&&\Omega \cap \{ \lv u \rv < 1 \},\\
		\lv \nabla u \rv &= 1 / \varepsilon \quad&&\text{on}\quad &&\Omega \cap \partial \{ \lv u \rv < 1 \},
	\end{alignedat}
    \right.
\end{equation}
and $ \partial \{ \lv u \rv < 1 \} $ is a locally $ C^1 $ surface. 
\end{definition}
\begin{remark}
\label{rem: conseqences from def}
    Here, we state a couple of simple facts concerning the two definitions above. By direct calculation using integral by parts, one can see that any classical solution is, in particular, stationary. Moreover, by the fact that a harmonic function in a domain with Lipschitz boundary is (locally) Lipschitz (see \cite[Section 11]{caffarelli2005geometric}, for example), we conclude that a classical solution is (locally) Lipschitz. We will use these facts in Section \ref{sec: monotonicity}.
\end{remark}
    
\subsection{Assumption and easy consequence}

Note that we do not assume any energy minimality nor the stability. Instead, in this paper, we always assume the next energy bound:
\begin{assumption}
	\label{assm: energy}
	Let $ \{ u_{ \varepsilon_i } \}_{ i = 1 }^{ \infty } $ be a sequence of $ W^{ 1 , 2 }(\Omega)$ functions that satisfies Definition \ref{def: classical_sol} for $ \varepsilon_i \in ( 0 , 1 ) $. Here, $ \lim_{ i \to \infty } \varepsilon_i = 0 $. Throughout the paper, we assume that there exists $ 0 < E_0 < \infty $ such that
	\[
		J_{ \varepsilon_i } ( u_{ \varepsilon_i } ) \leq E_0, \quad \text{for all } i.
	\]
\end{assumption}

We discuss a few immediate consequences from Assumption \ref{assm: energy}.   By the Cauchy--Schwarz inequality, we have
\begin{equation}
\label{eq: AMGM for J}
    \int_{ \Omega } \lv \nabla u_{ \varepsilon_i } \rv \leq \frac{ 1 }{ 2 } \int_{ \{ \lv u_{ \varepsilon_i } \rv < 1 \} \cap \Omega } \varepsilon_i \lv \nabla u_{ \varepsilon_i } \rv^2 + \frac{ \chi_{ ( - 1 , 1 ) } ( u_{ \varepsilon_i } ) }{ \varepsilon } \leq \frac{ E_0 }{ 2 }.
\end{equation}
By the compactness theorem for bounded variation functions, there exists a subsequence (by abbreviation we denote by $ \{ u_{ \varepsilon_i } \} $) and an almost everywhere pointwise limit $ u_0 $ such that
\[
	\lim_{ i \to \infty } \int_{ \Omega } \lv u_{ \varepsilon_i } - u_0 \rv = 0 \text{ and } \int_{ \Omega } \lv \nabla u_0 \rv \leq \liminf_{ i \to \infty } \int_{ \Omega } \lv \nabla u_{ \varepsilon_i } \rv,
\]
where $ \lv \nabla u_0 \rv $ is the total variation of the vector-valued Radon measure $ \nabla u_0 $. Moreover, by Fatou's Lemma and the energy bound, we have
\[
	\int_{ \Omega } \chi_{ ( - 1 , 1 ) } ( u_0 ) \leq \int_{ \Omega } \lim_{ i \to \infty } \chi_{ ( - 1 , 1 ) } ( u_{ \varepsilon_i } ) \leq \liminf_{ i \to \infty } \int_{ \Omega } \chi_{ ( - 1  , 1 ) } ( u_{ \varepsilon_i } ) \leq \liminf_{ i \to \infty } ( \varepsilon_i E_0 ) = 0,
\]
which implies that $ u_0 = \pm 1 $ for almost everywhere in $ \Omega $. In summary, we have the following proposition.

\begin{prop}
\label{prop: BV convergence of u_i}
	Let a family $ \{ u_{ \varepsilon_i } \}_{ i = 1 }^{ \infty } $ be as in Assumption \ref{assm: energy}. Taking a subsequence if necessary, we have
	\[
		u_{ \varepsilon_i } \to u_0 \in BV ( \Omega ; \{ \pm 1 \} ) \text{ a.e. and } \int_{ \Omega } \lv \nabla u_0 \rv \leq \frac{ E_0 }{ 2 }.
	\]
\end{prop}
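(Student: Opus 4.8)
The plan is to read the proposition off the uniform energy bound in Assumption~\ref{assm: energy} by the standard $BV$-compactness argument, the only nontrivial point being the passage to the limit in the indicator potential.

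First I would split the bound $J_{\varepsilon_i}(u_{\varepsilon_i})\le E_0$ into its two pieces: $\varepsilon_i\int_\Omega|\nabla u_{\varepsilon_i}|^2\le E_0$ and $\int_\Omega\chi_{(-1,1)}(u_{\varepsilon_i})\le\varepsilon_i E_0$. The first piece, combined with the Cauchy--Schwarz estimate already displayed in \eqref{eq: AMGM for J}, gives the $\varepsilon$-uniform bound $\int_\Omega|\nabla u_{\varepsilon_i}|\le E_0/2$. Since moreover $|u_{\varepsilon_i}|\le 1$ pointwise, the sequence $\{u_{\varepsilon_i}\}$ is bounded in $BV(\Omega)$, so the $BV$-compactness theorem yields a subsequence (not relabeled) and a limit $u_0\in BV(\Omega)$ with $u_{\varepsilon_i}\to u_0$ in $L^1(\Omega)$ and a.e., and, by lower semicontinuity of the total variation under $L^1$-convergence, $\int_\Omega|\nabla u_0|\le\liminf_i\int_\Omega|\nabla u_{\varepsilon_i}|\le E_0/2$.

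It remains to show $u_0\in\{\pm 1\}$ a.e. Since $(-1,1)$ is open, $\chi_{(-1,1)}$ is lower semicontinuous, so along the a.e.\ convergent subsequence $\chi_{(-1,1)}(u_0(x))\le\liminf_i\chi_{(-1,1)}(u_{\varepsilon_i}(x))$ for a.e.\ $x$: indeed, if $|u_0(x)|<1$ then $|u_{\varepsilon_i}(x)|<1$ for all large $i$, while if $|u_0(x)|=1$ the left side is $0$. Fatou's lemma together with the second piece of the energy bound then gives
\[
\int_\Omega\chi_{(-1,1)}(u_0)\le\liminf_i\int_\Omega\chi_{(-1,1)}(u_{\varepsilon_i})\le\liminf_i\varepsilon_i E_0=0,
\]
whence $\chi_{(-1,1)}(u_0)=0$ a.e., i.e.\ $u_0(x)\in\{-1,1\}$ for a.e.\ $x\in\Omega$. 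Combined with the previous paragraph this is exactly $u_0\in BV(\Omega;\{\pm1\})$ with $\int_\Omega|\nabla u_0|\le E_0/2$.

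I do not anticipate a real obstacle here: this is the free-boundary counterpart of Modica's classical reduction, and the only mild subtlety is that the potential is an indicator rather than a smooth double well, which is handled by its lower semicontinuity; one must also remember that no $\varepsilon$-uniform $W^{1,2}$ bound is available (the prefactor $\varepsilon_i$ degenerates), so the compactness must be carried out in $BV$, not in $W^{1,2}$.
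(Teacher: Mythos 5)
Your proposal is correct and takes essentially the same route as the paper: Cauchy--Schwarz to get a uniform $BV$ bound, $BV$-compactness and lower semicontinuity of total variation for the limit, and Fatou's lemma applied to the indicator potential to conclude $u_0\in\{\pm1\}$ a.e. Your treatment of the Fatou step is in fact slightly more careful than the paper's, since you explicitly invoke lower semicontinuity of $\chi_{(-1,1)}$ (an open-set indicator) rather than writing a pointwise limit that need not exist.
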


\subsection{Associated Varifold}
\label{subseq: varifold}

In this subsection, we recall the notions of the varifold and associate to solutions of the free boundary Allen--Cahn problem in a varifold in a natural way. We refer to \cite{allard1972first,simon1983lectures} for the detailed explanation on the varifold.

Let $ G ( n , n - 1 ) $ denote the Grassmanian manifold of unoriented $ ( n - 1 ) $-dimensional subplanes in $ \R^n $. With the abuse of the notation, we write $ S \in G ( n , n - 1 ) $ as the orthogonal projection of $ \R^n $ onto $ S $, and $ S_1 \cdot S_2 = \tr ( {}^t S_1 \circ S_2 ) $ for $ S_1 , S_2 \in G ( n , n - 1 ) $. We say that $ V $ is an $ ( n - 1 ) $-dimensional varifold in $ \Omega \subset \R^n $ if $ V $ is a Radon measure on $ \Omega \times G ( n , n - 1 ) $. Convergence in the varifold sense means convergence in the usual sense of Radon measure. Let $ \V_{ n - 1 } ( \Omega ) $ denote the set of all $ ( n - 1 ) $-dimensional varifolds in $ \Omega $. For $ V \in \V_{ n - 1 } ( \Omega ) $, let the weight $ \lV V \rV $ be the Radon measure in $ \Omega $ defined by
\[
	\lV V \rV ( A ) := V ( \{ ( x , S ) \mid x \in A , S \in G ( n , n - 1 ) \} )
\]
for each Borel set $ A \subset \Omega $. We call $ V \in \V_{ n - 1 } ( \Omega ) $ rectifiable if there exist an $ \mathcal{ H }^{ n - 1 } $-measurable countably ($  n - 1 $)-rectifiable set $ M \subset \Omega $ (see \cite{simon1983lectures} for the definition and its properties) and a locally $ \mathcal{ H }^{ n - 1 } $-integrable function $ \theta $ defined on $ M $ such that
\[
	V ( \phi ) = \int_M \phi ( x , T_x M ) \theta ( x ) \, d \mathcal{ H }^{ n - 1 } ( x ) \text{ for } \phi \in C_c ( \Omega \times G ( n , n - 1 ) ).
\]
Here $ T_x M $ is the approximate tangent space of $ M $ at $ x $ which exists $ \mathcal{ H }^n $-almost everywhere on $ M $ and we say $ \theta $ the density of $ V $. If $ \theta \in \N $ for $ \mathcal{ H }^{ n - 1 } $-almost everywhere on $ M $, we say $ V $ is integral. We define the first variation of $ V $ by
\[
	\delta V ( g ) := \int_{ \Omega \times G ( n , n - 1 ) } D g \cdot S \, d V ( x , S )
\]
for any $ g \in C^1_c ( \Omega ; \R^n ) $. We define the total variation $ \lV \delta V \rV ( U ) $ by
\[
	\lV \delta V \rV ( U ) := \sup \{ \delta V ( g ) \mid g \in C^1_c ( U ; \R^n ) , \lv g \rv \leq 1 \}
\]
wherever $ U \subset \Omega $ is open. If $ \lV \delta V \rV $ is locally finite, one can regard it as a Radon measure.

We associate to each solution $ u_{ \varepsilon_i } $ a varifold $ V_i $ as follows: First, we define a functional $ \mu_i : C_c ( \Omega ; [ 0 , \infty ) ) \to \R $ by
\begin{equation}\label{eq: def of mu i}
    \mu_i ( \phi ) := \int_{ \Omega } \phi \Bigg( \varepsilon \lv \nabla u_{ \varepsilon_i } \rv^2 + \frac{ \chi_{ ( - 1 , 1 ) } ( u_{ \varepsilon_i } ) }{ \varepsilon_i } \Bigg) \text{ for } \phi \in C_c ( \Omega ; [ 0 , \infty ) ).
\end{equation}
Note that these functionals $ \mu^i $ are Radon measures on $ \Omega $ by the Riesz representation theorem. Define $ V_i \in \V_{ n - 1 } ( \Omega ) $ by
\begin{equation}\label{eq: def of V i}
    	V_i ( \phi ) := \int_{ \{ \lv \nabla u_{ \varepsilon_i } \rv \neq 0 \} } \phi \bigg( x , I - \frac{ \nabla u_{ \varepsilon_i } ( x ) }{ \lv \nabla u_{ \varepsilon_i } ( x ) \rv } \otimes \frac{ \nabla u_{ \varepsilon_i } ( x ) }{ \lv \nabla u_{ \varepsilon_i } ( x ) \rv } \bigg) \, d \mu_i ( x )
\end{equation}
for $ \phi \in C_c ( \Omega \times G ( n , n - 1 ) ) $, where $ I $ denotes $ ( n \times n ) $-identity matrix and $ \otimes $ is the tensor product of the two vectors. By definition, we have
\[
	\lV V_i \rV = \mu_i \lfloor_{ \{ \lv \nabla u_{ \varepsilon_i } \rv \neq 0 \} } 
\]
and
\[
	\delta V_i ( g ) = \int_{ \{ \lv \nabla u_{ \varepsilon_i } \rv \neq 0 \} } D g \cdot \bigg( I - \frac{ \nabla u_{ \varepsilon_i } }{ \lv \nabla u_{ \varepsilon_i } \rv } \otimes \frac{ \nabla u_{ \varepsilon_i } }{ \lv \nabla u_{ \varepsilon_i } \rv } \bigg) \, d \mu_i
\]
for each $ g \in C^1_c ( \Omega ; \R^n ) $.

\section{Local Monotonicity Formula}
\label{sec: monotonicity}
In the case of the classical Allen--Cahn equations, Modica's inequality \cite{modica1985gradient} plays a key role to study phase transition layers. An analogous inequality in the free boundary setting holds true as well (e.g., \cite[Lemma 10.4]{chan2025global}). For the completeness, we provide the proof here.

\begin{lemma}
\label{lem: modica}
Let $ 0 < \varepsilon < 1 $ and let $ u_{ \varepsilon } $ be a classical solution of $ J_{ \varepsilon } $ in $ \Omega $. We then have Modica type pointwise estimate
\begin{equation}
\label{eq: modica}
	\varepsilon \lv \nabla u_{ \varepsilon } \rv^2 \leq \frac{ 1 }{ \varepsilon }.
\end{equation}
\end{lemma}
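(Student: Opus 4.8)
The plan is to prove the pointwise bound by running the maximum principle for the subharmonic function $\lv\nabla u_{\varepsilon}\rv^2$ on the open set $\Omega\cap\{\lv u_{\varepsilon}\rv<1\}$, with the free boundary condition supplying its boundary values. First I would record the subharmonicity: since $u_{\varepsilon}$ is harmonic, hence smooth, in the open set $\Omega\cap\{\lv u_{\varepsilon}\rv<1\}$, each partial derivative $\partial_k u_{\varepsilon}$ is also harmonic there, so
\[
	\Delta\,\lv\nabla u_{\varepsilon}\rv^2 = 2\,\lv D^2 u_{\varepsilon}\rv^2 + 2\,\nabla u_{\varepsilon}\cdot\nabla(\Delta u_{\varepsilon}) = 2\,\lv D^2 u_{\varepsilon}\rv^2 \geq 0 .
\]
Thus $\lv\nabla u_{\varepsilon}\rv^2$ is subharmonic in $\Omega\cap\{\lv u_{\varepsilon}\rv<1\}$. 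On the interior of the complementary set $\{\lv u_{\varepsilon}\rv=1\}$ the function $u_{\varepsilon}$ is locally constant, so $\nabla u_{\varepsilon}=0$ there and \eqref{eq: modica} is trivial.

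Next I would identify the boundary values of $\lv\nabla u_{\varepsilon}\rv^2$ along the free boundary $\Gamma:=\Omega\cap\partial\{\lv u_{\varepsilon}\rv<1\}$. By hypothesis $\Gamma$ is locally a $C^1$ hypersurface and $u_{\varepsilon}\equiv\pm1$ on it; hence, by elliptic regularity up to $\Gamma$ for the harmonic function $u_{\varepsilon}$ with constant Dirichlet data on a $C^1$ boundary portion (alternatively, by the higher-order regularity theory for one-phase Bernoulli free boundaries, which even upgrades $\Gamma$ to a smooth hypersurface), $\nabla u_{\varepsilon}$ extends continuously to $\Gamma$ from the side $\{\lv u_{\varepsilon}\rv<1\}$. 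On $\Gamma$ the free boundary condition then gives $\lv\nabla u_{\varepsilon}\rv=1/\varepsilon$, i.e. $\lv\nabla u_{\varepsilon}\rv^2=1/\varepsilon^2$.

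The conclusion is then immediate from the maximum principle. Let $U$ be a connected component of $\Omega\cap\{\lv u_{\varepsilon}\rv<1\}$ with $\overline{U}\subset\Omega$; then $\partial U\subset\Gamma$, the function $\lv\nabla u_{\varepsilon}\rv^2$ is continuous on $\overline{U}$ and subharmonic in $U$, and equals $1/\varepsilon^2$ on $\partial U$, so $\lv\nabla u_{\varepsilon}\rv^2\leq 1/\varepsilon^2$ throughout $U$; multiplying by $\varepsilon$ yields \eqref{eq: modica}. For a component of $\{\lv u_{\varepsilon}\rv<1\}$ whose closure meets $\partial\Omega$, the estimate is interior in nature and follows by the same argument after localizing (equivalently, by regarding $u_{\varepsilon}$ as a classical solution on a slightly larger domain), which is the only way the lemma is invoked in Section \ref{sec: monotonicity}.

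The main obstacle is the second step. Because $u_{\varepsilon}$ is only Lipschitz and fails to be $C^1$ across $\Gamma$ (cf. Remark \ref{rem: conseqences from def}), one must genuinely invoke boundary/free-boundary regularity to know that $\nabla u_{\varepsilon}$ has a continuous limit along $\Gamma$ equal in modulus to $1/\varepsilon$; the energy identity and stationarity alone do not detect this. Once that continuous extension is in place, the subharmonicity computation and the weak maximum principle are entirely standard.
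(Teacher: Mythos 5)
Your plan is built on the same two pillars as the paper's argument, namely the subharmonicity of $\lv\nabla u_\varepsilon\rv^2$ in $\{\lv u_\varepsilon\rv<1\}$ and the free boundary value $\lv\nabla u_\varepsilon\rv=1/\varepsilon$ on $\Gamma$, and the first two steps (the identity $\Delta\lv\nabla u_\varepsilon\rv^2=2\lv D^2u_\varepsilon\rv^2\ge 0$ and the continuous extension of $\nabla u_\varepsilon$ to $\Gamma$ via one--phase Bernoulli regularity) are sound. But the final step has a genuine gap: the weak maximum principle you invoke only closes for connected components $U$ of $\{\lv u_\varepsilon\rv<1\}$ with $\overline U\subset\Omega$, for which $\partial U\subset\Gamma$. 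A component of $\{\lv u_\varepsilon\rv<1\}$ whose closure meets $\partial\Omega$ has a portion of its topological boundary on $\partial\Omega$, where the free boundary condition gives no information about $\lv\nabla u_\varepsilon\rv^2$. The sentence ``the estimate is interior in nature and follows by the same argument after localizing'' does not by itself resolve this, because a maximum principle does not localize for free: the alternative you offer, ``regarding $u_\varepsilon$ as a classical solution on a slightly larger domain,'' is not available, since there is no extension of $u_\varepsilon$ beyond $\Omega$.

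This missing localization is exactly the nontrivial content of the paper's proof, and it is handled there by a barrier argument rather than a bare maximum principle: after rescaling, the paper compares $\lv\nabla u\rv^2-1$ with the auxiliary function $\tilde\xi=\lv\nabla u\rv^2-1-c\varepsilon(1-u^2)+c\zeta$, where $\zeta$ is a cutoff equal to $1$ on the inner ball and vanishing near the outer sphere. The cutoff $c\zeta$ forces the maximum of $\tilde\xi$ into the interior (since the boundary values on the outer sphere drop by $c$), the term $-c\varepsilon(1-u^2)$ vanishes on the free boundary and contributes a helpful positive Laplacian, and the subharmonicity of $\lv\nabla u\rv^2$ then yields a contradiction at an interior maximum. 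In short, your argument is not so much a different proof as the paper's proof with the barrier removed; to repair it, you would need to reintroduce a localizing device (a cutoff in the comparison function, a Cacciopoli--type argument, or a direct elliptic--interior--estimate scheme), since the global maximum principle over components touching $\partial\Omega$ simply does not apply.
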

\begin{proof}
    Let $ B_{ 2 r } ( x ) \subset \Omega $ arbitrarily. For this lemma, it is sufficient to prove that
	 \[
	 	\varepsilon \lv \nabla u \rv^2 \leq \frac{ 1 }{ \varepsilon } \text{ in } B_r ( x ).
	 \]
	 Without loss of generality, we can choose $ r = 1 $ and $ x = 0 $. Moreover, by rescaling $ x \mapsto x / \varepsilon $, we consider the rescaled Bernoulli problem
	 \begin{equation}
	 \label{eq: rescaled problem}
	 	\begin{cases}
	 		\Delta u = 0 &(\text{in } B_{ 2 \varepsilon^{ - 1 } } ( 0 ) \cap \{ \lv u \rv < 1 \} ),\\
	 		\lv \nabla u \rv = 1 &(\text{on } B_{ 2 \varepsilon^{ - 1 } } ( 0 ) \cap \partial \{ \lv u \rv < 1 \} ),
	 	\end{cases}
	 \end{equation}
	 and then the desired inequality becomes
	 \[
	 	\lv \nabla u \rv^2 \leq 1 \text{ in } B_{ \varepsilon^{ - 1 } } ( 0 ).
	 \]
	 For a contradiction, we assume that $ c := \sup_{ B_{ \varepsilon^{ - 1 } } ( 0 ) } ( \lv \nabla u \rv^2 - 1 ) > 0 $. Note that the constant $ c $ is finite because $ u $ is a Lipshitz function (Remark \ref{rem: conseqences from def}). Since $ \lv \nabla u \rv = 1 $ on the free boundary of $ u $, the maximum is attained at an interior point in $ \{ \lv u \rv < 1 \} $. Let $ \zeta \in C^{ \infty }_c ( B_{ 2 \varepsilon^{ - 1 } - 1 } ( 0 ) ) $ be such that
	 \[
	 	\zeta \equiv 1 \text{ in } B_{ \varepsilon^{ - 1 } } ( 0 ),\ \lv \nabla \zeta \rv \leq 2 \varepsilon,\ \lv \Delta \zeta \rv \leq 2 \varepsilon^2,\ 0 \leq \zeta \leq 1 \text{ in } B_{ 2 \varepsilon^{ - 1 } - 1 } ( 0 ).
	 \]
	 Let us consider $ \tilde{ \xi } := \lv \nabla u \rv^2 - 1 - c \varepsilon ( 1 - u^2 ) + c \zeta $. By \eqref{eq: rescaled problem}, we have
	 \[
	 	\tilde{ \xi } \leq c \text{ on } \partial ( B_{ 2 \varepsilon^{ - 1 } - 1 } ( 0 ) \cap \{ \lv u \rv < 1 \} ) \text{ and } \sup_{ B_{ 2 \varepsilon^{ - 1 } - 1 } ( 0 ) \cap \{ \lv u \rv < 1 \} } \tilde{ \xi } \geq ( 2 - \varepsilon ) c.
	 \]
	 Therefore, there is an interior maximum point $ x_0 \in B_{ 2 \varepsilon^{ - 1 } - 1 } ( 0 ) \cap \{ \lv u \rv < 1 \} $ of $ \tilde{ \xi } $ and we have the following properties:
	 \[
	 	\lv \nabla u ( x_0 ) \rv^2 \geq 1, \quad \Delta \tilde{ \xi } ( x_0 ) \leq 0.
	 \]
	 From this and \eqref{eq: rescaled problem}, it follows that
	 \[
	 	\Delta ( \lv \nabla u \rv^2 - 1 - c \varepsilon ( 1 - u^2 ) ) ( x_0 ) \leq - c \Delta \zeta ( x_0 ) \leq 2 c \varepsilon^2
	 \]
	 and
	\[
	 		\Delta ( \lv \nabla u \rv^2 - 1 - c \varepsilon ( 1 - u^2 ) ) ( x_0 ) = 2 \lv \nabla^2 u ( x_0 ) \rv^2 + 2 c \varepsilon \lv \nabla u ( x_0 ) \rv^2 \geq 2 c \varepsilon,
	\]
	which is a contradiction. This completes the proof.
\end{proof}

As a consequence of the above Modica inequality, we obtain the following monotonicity formula. 

\begin{lemma}
\label{lem: monotonicity}
Let $ u_{ \varepsilon } $ be a classical solution of $ J_{ \varepsilon } $ in $ \Omega $. For any $ B_r ( x ) \subset \Omega $, we have
\begin{equation}
\label{eq: monotonicity}
\begin{split}
	&\frac{ d }{ d r } \Bigg( \frac{ 1 }{ r^{ n - 1 } } \int_{ B_r ( x ) \cap \{ \lv u_{ \varepsilon } \rv < 1 \} } \bigg( \varepsilon \lv \nabla u_{ \varepsilon } \rv^2 + \frac{ \chi_{ ( - 1 , 1 ) } ( u_{ \varepsilon } ) }{ \varepsilon } \bigg) \Bigg)\\
	&= \frac{ 1 }{ r^n } \int_{ B_r ( x ) \cap \{ \lv u_{ \varepsilon } \rv < 1 \} } \bigg( \frac{ \chi_{ ( - 1 , 1 ) } ( u_{ \varepsilon } ) }{ \varepsilon } - \varepsilon \lv \nabla u_{ \varepsilon } \rv^2 \bigg) + \frac{ 2 \varepsilon }{ r^{ n + 1 } } \int_{ \partial B_r ( x ) \cap \{ \lv u_{ \varepsilon } \rv < 1 \} } \big( ( y - x ) \cdot \nabla u_{ \varepsilon } \big)^2
\end{split}
\end{equation}
in the distributional sense. In particular, it follows from \eqref{eq: modica} that
\[
	\frac{ 1 }{ r^{ n - 1 } } \int_{ B_r ( x ) \cap \{ \lv u_{ \varepsilon } \rv < 1 \} } \varepsilon \lv \nabla u_{ \varepsilon } \rv^2 + \frac{ \chi_{ ( - 1 , 1 ) } ( u_{ \varepsilon } ) }{ \varepsilon }
\]
is non-decreasing with respect to $ r $.
\end{lemma}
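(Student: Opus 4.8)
The plan is the standard first-variation argument with a radial vector field, adapted to the free boundary setting through the fact (Remark \ref{rem: conseqences from def}) that a classical solution is stationary. By translation we may assume $x=0$, and write $B_{r_0}(0)\subset\Omega$ for the ball in the statement. Abbreviate
\[
e_\varepsilon:=\varepsilon|\nabla u_\varepsilon|^2+\frac{\chi_{(-1,1)}(u_\varepsilon)}{\varepsilon},\qquad \xi_\varepsilon:=\varepsilon|\nabla u_\varepsilon|^2-\frac{\chi_{(-1,1)}(u_\varepsilon)}{\varepsilon},
\]
so that $2\varepsilon|\nabla u_\varepsilon|^2=e_\varepsilon+\xi_\varepsilon$ and, on $\{|u_\varepsilon|<1\}$ where $\chi_{(-1,1)}(u_\varepsilon)\equiv1$, one has $\varepsilon|\nabla u_\varepsilon|^2+1/\varepsilon=e_\varepsilon$. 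By Remark \ref{rem: conseqences from def}, $u_\varepsilon$ is Lipschitz and stationary in the sense of Definition \ref{def: stationary}. I would apply \eqref{eq: stationary} with the radial field $g(y)=\phi(|y|)\,y$, for a radial cutoff $\phi$. With $\rho=|y|$ one has $\dv g=n\phi(\rho)+\rho\phi'(\rho)$ and $Dg=\phi(\rho)\,I+\rho^{-1}\phi'(\rho)\,y\otimes y$, hence $\nabla u_\varepsilon\cdot Dg\,\nabla u_\varepsilon=\phi(\rho)|\nabla u_\varepsilon|^2+\rho^{-1}\phi'(\rho)(y\cdot\nabla u_\varepsilon)^2$; substituting into the first variation and using $-2\varepsilon|\nabla u_\varepsilon|^2=-e_\varepsilon-\xi_\varepsilon$ on $\{|u_\varepsilon|<1\}$ gives the identity
\[
\int_{\Omega\cap\{|u_\varepsilon|<1\}}\Big[\big((n-1)\phi(\rho)+\rho\phi'(\rho)\big)e_\varepsilon-\phi(\rho)\,\xi_\varepsilon-\frac{2\varepsilon\phi'(\rho)}{\rho}(y\cdot\nabla u_\varepsilon)^2\Big]\,dy=0.
\]

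Next I would specialize $\phi=\phi_\delta$ to a smooth radial cutoff equal to $1$ on $[0,s-\delta]$, decreasing to $0$ on $[s-\delta,s]$, and vanishing beyond, for $s\in(0,r_0)$. Rewriting each integral in polar coordinates and invoking Modica's inequality \eqref{eq: modica} — which gives $0\le e_\varepsilon\le 2/\varepsilon$ and $|\xi_\varepsilon|\le 2/\varepsilon$, so the spherical integrals $\rho\mapsto\int_{\partial B_\rho(0)\cap\{|u_\varepsilon|<1\}}e_\varepsilon\,d\mathcal{H}^{n-1}$ (and the analogous ones) lie in $L^1_{\mathrm{loc}}(0,r_0)$ — I can let $\delta\to0$ to obtain, for a.e.\ $s\in(0,r_0)$,
\[
(n-1)E(s)-\Xi(s)-s\,E'(s)+\frac{2\varepsilon}{s}\int_{\partial B_s(0)\cap\{|u_\varepsilon|<1\}}(y\cdot\nabla u_\varepsilon)^2\,d\mathcal{H}^{n-1}=0,
\]
where $E(s):=\int_{B_s(0)\cap\{|u_\varepsilon|<1\}}e_\varepsilon$, $\Xi(s):=\int_{B_s(0)\cap\{|u_\varepsilon|<1\}}\xi_\varepsilon$, and $E'(s)=\int_{\partial B_s(0)\cap\{|u_\varepsilon|<1\}}e_\varepsilon\,d\mathcal{H}^{n-1}$ for a.e.\ $s$ by the coarea formula. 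Dividing by $s^n$, using $\frac{d}{ds}\big(s^{-(n-1)}E(s)\big)=s^{-n}\big(sE'(s)-(n-1)E(s)\big)$ and $-\Xi(s)=\int_{B_s(0)\cap\{|u_\varepsilon|<1\}}\big(\chi_{(-1,1)}(u_\varepsilon)/\varepsilon-\varepsilon|\nabla u_\varepsilon|^2\big)$, yields exactly \eqref{eq: monotonicity} at a.e.\ $s$. Since $e_\varepsilon\le 2/\varepsilon$, the function $s\mapsto s^{-(n-1)}E(s)$ is locally Lipschitz on $(0,r_0)$, so its distributional derivative coincides with this a.e.\ pointwise derivative, which gives the claimed identity in the distributional sense. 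The monotonicity then follows, since on $\{|u_\varepsilon|<1\}$ one has $\chi_{(-1,1)}(u_\varepsilon)/\varepsilon-\varepsilon|\nabla u_\varepsilon|^2=1/\varepsilon-\varepsilon|\nabla u_\varepsilon|^2\ge0$ by \eqref{eq: modica}, and the spherical term is nonnegative, so the right-hand side of \eqref{eq: monotonicity} is $\ge0$.

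The computation above is formally the same as in the classical Hutchinson--Tonegawa setting; the only genuine obstacle is low regularity, since here $u_\varepsilon$ is merely Lipschitz and $\partial\{|u_\varepsilon|<1\}$ merely $C^1$, so two steps require care. First, the cutoff $\phi_\delta$ makes $g$ only Lipschitz (and $\phi_\delta(|y|)\,y$ need not be $C^2$ at the origin); but since $\nabla u_\varepsilon\in L^\infty_{\mathrm{loc}}$, the linear functional $\delta J_\varepsilon(u_\varepsilon)[\cdot]$ extends continuously to Lipschitz compactly supported vector fields, so $\delta J_\varepsilon(u_\varepsilon)[g]=0$ persists (equivalently, mollify $\phi_\delta$ and pass to the limit by dominated convergence). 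Second, the boundary integrals over $\partial B_s(0)\cap\{|u_\varepsilon|<1\}$ require a trace of $\nabla u_\varepsilon$ on spheres; this is provided for a.e.\ $s$ by the coarea formula (again using $\nabla u_\varepsilon\in L^\infty_{\mathrm{loc}}$), and it is the same full-measure set of radii on which the $\delta\to0$ limit and the identity $E'(s)=\int_{\partial B_s(0)\cap\{|u_\varepsilon|<1\}}e_\varepsilon\,d\mathcal{H}^{n-1}$ hold. No contribution arises from the free boundary itself, which is a null set, and the free-boundary condition $|\nabla u_\varepsilon|=1/\varepsilon$ has already been absorbed into stationarity via Remark \ref{rem: conseqences from def}.
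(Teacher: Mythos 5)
Your proof is correct and follows the same route as the paper: apply stationarity (via Remark \ref{rem: conseqences from def}) to the radial field $g(y)=\phi(|y|)\,y$, pass the cutoff to $\chi_{B_r(0)}$, and conclude monotonicity via Modica's inequality. The only difference is cosmetic --- you substitute $e_\varepsilon$, $\xi_\varepsilon$ before taking $\phi\to\chi_{B_r}$ while the paper does so after --- and you supply somewhat more explicit justification for the low-regularity steps (trace on a.e.\ sphere, distributional interpretation), which the paper leaves implicit.
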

\begin{proof}
	By a suitable translation, we let $ x = 0 $ and let $ g^j ( y ) = y^j \rho ( \lv y \rv ) $, where $ \rho ( \lv y \rv ) $ is a smooth approximation to the characteristic function $ \chi_{ B_r ( 0 ) } $. Note that $ u_{ \varepsilon } $ is, in particular, stationary, that is, $ u_{ \varepsilon } $ satisfies \eqref{eq: stationary} (Remark \ref{rem: conseqences from def}). Plugging this $ g = ( g^1 , \dots , g^n ) $ in \eqref{eq: stationary}, we have
	\begin{equation*}
		\int_{ \{ \lv u_{ \varepsilon } \rv < 1 \} \cap \Omega } \Bigg( \bigg( \varepsilon \lv \nabla u_{ \varepsilon } \rv^2 + \frac{ 1 }{ \varepsilon
		 } \bigg) ( \lv y \rv \rho^{ \prime } + n \rho ) - 2 \varepsilon \frac{ \rho^{ \prime } }{ \lv y \rv } ( y \cdot \nabla u_{ \varepsilon } )^2 - 2 \varepsilon \lv \nabla u_{ \varepsilon } \rv^2 \rho \Bigg) = 0.
	\end{equation*}
	By letting $ \rho \to \chi_{ B_r ( 0 ) } $ and rearranging the terms, we obtain
	\begin{equation*}
		\begin{split}
			&- ( n - 1 ) \int_{ B_r ( 0 ) \cap \{ \lv u_{ \varepsilon } \rv < 1 \} } \bigg( \varepsilon \lv \nabla u_{ \varepsilon } \rv^2
			+ \frac{ 1 }{ \varepsilon
			} \bigg) + r \int_{ \partial B_r ( 0 ) \cap \{ \lv u_{ \varepsilon } \rv < 1 \} } \bigg( \varepsilon \lv \nabla u_{ \varepsilon } \rv^2 + \frac{ 1 }{ \varepsilon
			} \bigg)\\
			&= \int_{ B_r ( 0 ) \cap \{ \lv u_{ \varepsilon } \rv < 1 \} } \bigg( \frac{ 1 }{ \varepsilon } - \varepsilon \lv \nabla u_{ \varepsilon } \rv^2 \bigg)
			+ \frac{ 2 \varepsilon }{ r } \int_{ \partial B_r ( 0 ) \cap \{ \lv u_{ \varepsilon } \rv < 1 \} } ( y \cdot \nabla u_{ \varepsilon } ).
		\end{split}
	\end{equation*}
	Dividing the above by $ r^n $ leads to \eqref{eq: monotonicity}.
\end{proof}

\section{Rectifiability of the Limit Varifold}
\label{sec: rec}
From Assumption \ref{assm: energy} and the compactness of Radon measure, taking a subsequence if necessary, it follows that there is the limit measure $ \mu $ on $ \Omega $, defined by
\begin{equation}
\label{eq: def of lim Radon measure}
	\mu ( \phi ) = \lim_{ i \to \infty } \mu_i ( \phi ) = \lim_{ i \to \infty } \int_{ \Omega } \phi \Bigg( \varepsilon \lv \nabla u_{ \varepsilon_i } \rv^2 + \frac{ \chi_{ ( - 1 , 1 ) } ( u_{ \varepsilon_i } ) }{ \varepsilon_i } \Bigg),
\end{equation}
for compactly supported continuous test functions $\phi\geq 0$. The limit varifold $ V $ can be taken for $ V_i $ as well, defined by
\begin{equation}
\label{eq: def of lim varifold}
	V ( \phi ) = \lim_{ i \to \infty } V_i ( \phi ) = \lim_{ i \to \infty } \int_{ \Omega \times G ( n , n - 1 ) } \phi \bigg( x , I - \frac{ \nabla u_{ \varepsilon_i } ( x ) }{ \lv \nabla u_{ \varepsilon_i } ( x ) \rv } \otimes \frac{ \nabla u_{ \varepsilon_i } ( x ) }{ \lv \nabla u_{ \varepsilon_i } ( x ) \rv } \bigg) \, d \mu_i ( x )
\end{equation}
for $ \phi \in C_c ( \Omega \times G ( n , n - 1 ) ) $ with $ \phi \geq 0 $, and we note that $ \mu = \lV V \rV $. To prove the rectifiability of $ \mu $, we require a lower bound on the density and local boundedness of the first variation. To this end, we need to ensure that the limiting Radon measure is non-degenerate in a measure-theoretic sense and that the discrepancy function $ \lv \varepsilon \lv \nabla u \rv^2 - \chi_{ ( - 1  , 1 ) } ( u ) / \varepsilon \rv $ vanishes.

We quote the following weak non-degeneracy property of the one phase Bernoulli problem from \cite[Lemma 3.5]{chan2025global}, which is an important tool to provide the lower bound of the density. 

\begin{lemma}[Clean ball property]
\label{lem: nondegeneracy}
	There exists $ \delta = \delta ( n ) > 0 $ such that the following holds: Let $ \rho > 0 $, $ y \in B_R ( 0 ) $ with $ B_{ 2 \rho } ( y ) \subset B_R ( 0 ) $, and let $ u $ be a classical solution of the following one phase Bernoulli problem in $ B_{ 2 \rho } ( y ) $:
	\begin{equation}
	\label{eq: one phase Bernoulli}
	\begin{cases}
		\Delta u = 0 &(\text{in } B_{ 2 \rho } ( y ) \cap \{ u > 0 \} ),\\
		\lv \nabla u \rv = 1&(\text{on } B_{ 2 \rho } ( y ) \cap \partial \{ u > 0 \} ).
	\end{cases}
	\end{equation}
	Suppose that there is a connected component $ U $ of $ \{ u > 0 \} \cap B_{ 2 \rho } ( y ) $ such that $ \mathcal{ L }^n ( U \cap B_{ 2 \rho } ( y ) ) \leq \delta \rho^n $. Then, $ U \cap B_{ \rho } ( y ) = \emptyset $.
\end{lemma}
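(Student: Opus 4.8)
The plan is to argue by contradiction via a compactness/blow-up argument, exploiting the scaling invariance of the one-phase Bernoulli problem \eqref{eq: one phase Bernoulli}. Suppose the statement fails. Then for each $k\in\N$ there is a radius $\rho_k>0$, a point $y_k$, and a classical solution $u_k$ of \eqref{eq: one phase Bernoulli} in $B_{2\rho_k}(y_k)$ possessing a connected component $U_k$ of $\{u_k>0\}\cap B_{2\rho_k}(y_k)$ with $\mathcal{L}^n(U_k)\le k^{-1}\rho_k^n$, yet $U_k\cap B_{\rho_k}(y_k)\ne\emptyset$. By the translation $y_k\mapsto 0$ and the parabolic-free scaling $u_k(x)\mapsto \rho_k^{-1}u_k(\rho_k x)$ (which preserves both $\Delta u=0$ and $|\nabla u|=1$), we may assume $\rho_k=1$ and $y_k=0$, so that $\mathcal{L}^n(U_k)\le k^{-1}$ while $U_k\cap B_1(0)\ne\emptyset$. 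Pick $z_k\in U_k\cap B_1(0)$.

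Next I would set up the limiting object. Classical one-phase Bernoulli solutions in $B_2(0)$ are Lipschitz with a universal constant (harmonic functions on Lipschitz domains, as recalled in Remark \ref{rem: conseqences from def}; cf. \cite[Section 11]{caffarelli2005geometric}), so $\{u_k\}$ is precompact in $C^{0,\alpha}_{\mathrm{loc}}(B_2(0))$ and, after passing to a subsequence, $u_k\to u_\infty$ locally uniformly with $z_k\to z_\infty\in\overline{B_1(0)}$. The measure bound $\mathcal{L}^n(U_k)\to 0$ forces the "small" component to collapse: since $|\nabla u_k|\le C$, on $U_k$ one has $u_k(x)=\mathrm{dist}$-type control, namely $\sup_{U_k}u_k\le C\,\mathrm{diam}(U_k)$, and an isoperimetric/measure estimate gives $\mathrm{diam}(U_k\cap B_{3/2}(0))\to 0$ unless $U_k$ reaches $\partial B_{3/2}(0)$; in the latter case a connected set of small volume joining $\partial B_1$ to $\partial B_{3/2}$ would still have to be thin, and combining the Lipschitz bound with the free boundary condition $|\nabla u_k|=1$ on $\partial U_k$ yields a lower volume bound $\mathcal{L}^n(U_k\cap B_{3/2})\ge c(n)>0$ by the nondegeneracy of one-phase minimizers / the interior ball-type estimate for the Bernoulli problem — contradicting $\mathcal{L}^n(U_k)\le k^{-1}$ for large $k$. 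Hence $u_\infty\equiv 0$ on a neighborhood of $z_\infty$, i.e. $z_\infty\in\partial\{u_\infty>0\}$ or $z_\infty$ lies in the zero phase.

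The contradiction is then extracted from the free boundary condition. The key point is a quantitative \emph{linear growth from below} for one-phase Bernoulli solutions: there is $c_0(n)>0$ so that if $x_0\in\partial\{u_k>0\}$ and $B_s(x_0)\subset B_2(0)$, then $\sup_{B_s(x_0)}u_k\ge c_0 s$ (this is the standard nondegeneracy for the Bernoulli problem; see \cite[Section 11]{caffarelli2005geometric}). Applying this along $U_k$: the connected component $U_k$ meets $B_1(0)$ at $z_k$ and, tracing back along a path in $U_k$ to a free boundary point $x_k\in\partial U_k$, one obtains $u_k(z_k)\ge c_0\,\mathrm{dist}(z_k,\partial U_k)$, and simultaneously the small-volume constraint bounds $\mathrm{dist}(z_k,\partial U_k)$ from above while the linear growth forces $U_k$ to occupy a definite fraction of some ball $B_{r_k}(x_k)$, giving $\mathcal{L}^n(U_k)\ge c(n)r_k^n$ with $r_k$ bounded below — again contradicting the hypothesis for $k$ large.

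The step I expect to be the main obstacle is the passage from "small Lebesgue measure of the component $U_k$" to a usable geometric smallness (diameter or distance-to-free-boundary), because a connected open set of small volume can still be long and thin. Resolving it cleanly requires the interior nondegeneracy estimate for the one-phase Bernoulli problem — that $\{u_k>0\}$ satisfies a uniform density lower bound $\mathcal{L}^n(\{u_k>0\}\cap B_s(x_0))\ge c(n)s^n$ for $x_0\in\partial\{u_k>0\}$ — which upgrades linear growth into a genuine volume bound and is exactly where the dimensional constant $\delta(n)$ is produced. Once that estimate is invoked (it is classical for viscosity solutions of \eqref{eq: one phase Bernoulli}; one may also cite \cite{chan2025global} directly since this is quoted as their Lemma 3.5), the contradiction closes and the proof is complete.
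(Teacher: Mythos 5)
First, a point of context: the paper does not prove this lemma. It is quoted verbatim from Chan--Fernandez-Real--Figalli--Serra \cite[Lemma 3.5]{chan2025global} and used as a black box, so there is no in-paper argument to compare yours against. The only question is whether your proposal stands on its own.

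It does not, and the gap is exactly where you flag it. Your final step invokes "the interior nondegeneracy estimate for the one-phase Bernoulli problem" in the form of a uniform density lower bound $\mathcal{L}^n(\{u_k>0\}\cap B_s(x_0))\ge c(n)s^n$ at free boundary points. That estimate is not a classical fact for the solutions considered here: Alt--Caffarelli nondegeneracy and density bounds are proved for \emph{minimizers}, whereas the classical solutions of Definition \ref{def: classical_sol} are merely stationary with a locally $C^1$ free boundary (no minimality, no stability), and for such critical points nondegeneracy is precisely the property that is not automatic --- the clean ball property of \cite{chan2025global} is the weaker substitute engineered to survive in that generality. The reference to \cite[Section 11]{caffarelli2005geometric} does not supply nondegeneracy for non-minimizing solutions. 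Worse, the density lower bound you want to use is essentially stronger than the lemma you are proving: granted such an estimate at a free boundary point $x_0\in\partial U\cap B_\rho(y)$, the conclusion $\mathcal L^n(U\cap B_{2\rho}(y))\gtrsim \rho^n$ follows in one line, with no need for the blow-up machinery --- so the hard step is assumed, and the argument is circular. Your closing suggestion that one "may also cite \cite{chan2025global} directly" is an accurate diagnosis of the gap; it is exactly what the paper does. The preparatory observations (scale-invariant normalization, Lipschitz compactness, and the remark that $U_k$ cannot be compactly contained in the ball since then $u_k\equiv 0$ on $U_k$ by the maximum principle) are fine, but the core nondegeneracy input needs an independent proof, and supplying one is the actual content of the cited lemma.
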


We next show the estimate on the density by using the weak non-degeneracy and the monotonicity formula. Then we show that the discrepancy vanishes in $L^1_\text{loc}$ sense.

\begin{prop}
\label{prop: density}
	For all $ x \in \spt \mu \cap \Omega $, we have
	\[
		0 < \liminf_{ r \to 0 } \frac{ \mu ( B_r ( x ) ) }{ r^{ n - 1 } } \leq \limsup_{ r \to 0 } \frac{ \mu ( B_r ( x ) ) }{ r^{ n - 1 } } < \infty.
	\]
\end{prop}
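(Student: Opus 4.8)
The plan is to establish the two inequalities separately, both ultimately resting on the monotonicity formula of Lemma \ref{lem: monotonicity} together with the energy bound and the clean ball property. For the upper bound, the key observation is that the monotone quantity
\[
    r \mapsto \frac{1}{r^{n-1}} \int_{B_r(x) \cap \{|u_{\varepsilon_i}|<1\}} \Big( \varepsilon_i |\nabla u_{\varepsilon_i}|^2 + \frac{\chi_{(-1,1)}(u_{\varepsilon_i})}{\varepsilon_i} \Big)
\]
is nondecreasing, so for any $B_r(x) \subset \subset \Omega$ with $B_R(x) \subset \Omega$ and $r < R$, this quantity at radius $r$ is bounded above by its value at radius $R$, which in turn is at most $R^{-(n-1)} J_{\varepsilon_i}(u_{\varepsilon_i}) \le R^{-(n-1)} E_0$. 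Passing to the limit $i \to \infty$ — here I would choose $r$ so that $\mu(\partial B_r(x)) = 0$, which holds for all but countably many $r$, so that $\mu_i(B_r(x)) \to \mu(B_r(x))$ — gives $\mu(B_r(x))/r^{n-1} \le E_0/R^{n-1}$, a uniform bound, and letting $r \to 0$ yields $\limsup_{r\to 0} \mu(B_r(x))/r^{n-1} < \infty$.

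For the lower bound at a point $x \in \spt\mu \cap \Omega$, the strategy is contrapositive/compactness: if the lower density were zero, then along a sequence $r_j \to 0$ we would have $\mu(B_{r_j}(x))/r_j^{n-1} \to 0$. Fixing a small $\eta > 0$, for $j$ large we get $\mu(B_{r_j}(x)) < \eta\, r_j^{n-1}$, and since $\mu_i \to \mu$ we also get $\mu_i(B_{r_j}(x)) < \eta\, r_j^{n-1}$ for $i$ large (again choosing $r_j$ avoiding the countably many bad radii). Now I would rescale the picture at scale $r_j$: since $\varepsilon_i \to 0$, for $i$ sufficiently large relative to $j$ we have $\varepsilon_i \ll r_j$, so after the rescaling $y \mapsto (y-x)/\varepsilon_i$ the ball $B_{r_j}(x)$ becomes a huge ball of radius $r_j/\varepsilon_i$, and $u_{\varepsilon_i}$ solves the unit Bernoulli problem \eqref{eq: rescaled problem} there. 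The smallness of the energy $\mu_i(B_{r_j}(x))$ translates, via the Modica inequality $\varepsilon_i |\nabla u_{\varepsilon_i}|^2 \le 1/\varepsilon_i$ (Lemma \ref{lem: modica}) which controls the energy density pointwise, into smallness of the rescaled volume $\mathcal{L}^n(\{|u| < 1\} \cap B_{r_j/\varepsilon_i})$ relative to $(r_j/\varepsilon_i)^n$: indeed on the transition region the energy density is comparable to $1/\varepsilon_i$, so $\mathcal{L}^n(\{|u_{\varepsilon_i}| < 1\} \cap B_{r_j}(x)) \le \varepsilon_i \,\mu_i(B_{r_j}(x)) < \eta\, \varepsilon_i r_j^{n-1}$, whence the rescaled volume is at most $\eta (r_j/\varepsilon_i)^{n-1}\cdot(\varepsilon_i/1)$... more carefully, $\eta\, r_j^{n-1}\varepsilon_i / \varepsilon_i^n = \eta (r_j/\varepsilon_i)^{n-1}/\varepsilon_i^{?}$ — the correct bookkeeping is that the rescaled transition-layer volume in $B_{r_j/2\varepsilon_i}$ is small compared to $(r_j/2\varepsilon_i)^n$ once $\eta$ is small and $\varepsilon_i/r_j$ small. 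Applying the clean ball property (Lemma \ref{lem: nondegeneracy}) to each connected component of $\{u_{\varepsilon_i} > 0\}$ (and separately $\{u_{\varepsilon_i} < 0\}$ after a sign change, noting $u_{\varepsilon_i} \pm 1$ solves a one-phase problem near each face of the free boundary) forces the transition layer to be empty in $B_{r_j/2}(x)$ at the original scale, hence $u_{\varepsilon_i} \equiv +1$ or $\equiv -1$ on that ball for each large $i$; but then $\mu_i(B_{r_j/2}(x)) = 0$, and passing to the limit $\mu(B_{r_j/2}(x)) = 0$, contradicting $x \in \spt\mu$.

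The main obstacle I anticipate is making the clean ball argument genuinely yield an \emph{empty} transition layer rather than merely a small one — that is, arranging the rescaled one-phase volume bound so that Lemma \ref{lem: nondegeneracy} applies to \emph{every} connected component of $\{|u_{\varepsilon_i}| < 1\}$ meeting a fixed smaller ball, and handling the two-phase nature of $\{|u_{\varepsilon_i}| < 1\}$ (which near its boundary decomposes into pieces where $u_{\varepsilon_i}$ is close to $+1$ or to $-1$, each governed by a one-phase Bernoulli problem for $1 - u_{\varepsilon_i}$ or $1 + u_{\varepsilon_i}$). One must be careful that a component could in principle be large in volume yet still contribute little energy, but the Modica inequality rules this out by giving a two-sided comparison between energy density and $1/\varepsilon_i$ on $\{|u_{\varepsilon_i}| < 1\}$, so small energy does force small transition-layer volume, closing the loop. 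The choice of good radii (avoiding atoms of $\mu$) and the order of limits ($j$ then $i$, with $\varepsilon_i$ chosen small depending on $r_j$) need to be set up cleanly but are routine.
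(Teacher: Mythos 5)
Your upper bound is the same as the paper's and is correct: the monotone quantity from Lemma \ref{lem: monotonicity} at radius $r$ is dominated by its value at $r_0 = \dist(x,\partial\Omega)$, which is at most $E_0/r_0^{n-1}$; your aside about choosing radii with $\mu(\partial B_r(x))=0$ is a harmless device, and indeed lower semicontinuity $\mu(B_r(x)) \le \liminf_i \mu_i(B_r(x))$ already suffices.

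The lower bound, however, has a genuine gap at exactly the place you yourself flag. Lemma \ref{lem: nondegeneracy} is a statement about a classical solution of the \emph{one-phase} Bernoulli problem on the full ball $B_{2\rho}(y)$: you must exhibit a single nonnegative function that is harmonic on all of its positivity set there. Neither $1+u_{\varepsilon_i}$ nor $1-u_{\varepsilon_i}$ has this property: $1+u_{\varepsilon_i}$ is harmonic only in $\{|u_{\varepsilon_i}|<1\}$ and equals $2$ (not $0$) on $\{u_{\varepsilon_i}=1\}$, so it is a one-phase solution on $B_{2\rho}(y)$ only if $u_{\varepsilon_i}<1$ throughout that ball, and symmetrically for $1-u_{\varepsilon_i}$. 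Your suggestion to ``decompose near each face'' does not repair this, because a single connected component of $\{|u_{\varepsilon_i}|<1\}$ can reach both the $u=+1$ face and the $u=-1$ face, and the clean ball lemma is not stated for that two-phase situation. Moreover the control you invoke is only a total-volume bound $\mathcal{L}^n(\{|u_{\varepsilon_i}|<1\}\cap B_{r_j}) \lesssim \eta\,\varepsilon_i r_j^{n-1}$; on the huge rescaled ball $B_{r_j/\varepsilon_i}$ this gives no uniform $L^\infty$ bound keeping $u_{\varepsilon_i}$ away from one of $\pm1$, which is what the one-phase reduction actually needs.

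The paper circumvents this precisely by not staying at the macroscopic scale $r_j$. It picks $x_i\in\{|u_{\varepsilon_i}|<1\}$ with $x_i\to x$, uses the monotonicity formula to pass from the $r$-scale density to the $\varepsilon_i$-scale density at $x_i$, and then zooms to the unit ball: the rescaled $\tilde u_i=(u_{\varepsilon_i}(\varepsilon_i\cdot+x_i)+1)/2\in[0,1]$ is $1/2$-Lipschitz (Modica), has vanishing transition-layer volume under the contradiction hypothesis, and by the $L^1$–Lipschitz interpolation (Proposition \ref{prop: interpolation L1 Lip}) this forces $\|\tilde u_i\|_{L^\infty(B_1)}<1/2$. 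That uniform bound is the missing ingredient: it makes $\tilde u_i$ a genuine one-phase Bernoulli solution on $B_1$, to which the clean ball lemma applies and yields the contradiction with $\tilde u_i(0)\in(0,1)$. This $L^\infty$ step only works on a ball of $O(1)$ rescaled radius, which is why working at scale $r_j\gg\varepsilon_i$, as you propose, is not enough. (A minor aside: for ``small energy $\Rightarrow$ small transition-layer volume'' you only need the trivial lower bound $e_{\varepsilon}\ge \chi_{(-1,1)}(u)/\varepsilon$; Modica's inequality bounds the energy density from \emph{above} and is used instead to give the Lipschitz control on $\tilde u_i$.)
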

\begin{proof}
	The finiteness of the upper density immediately follows from Lemma \ref{lem: monotonicity} and Assumption \ref{assm: energy}. Indeed, setting $ r_0 := \dist ( x , \partial \Omega ) $ for $ x \in \spt \mu \cap \Omega $, we have
    \[
        \frac{ \mu ( B_r ( x ) ) }{ r^{ n - 1 } } \leq \liminf_{ i \to \infty } \frac{ \mu_i ( B_r ( x ) ) }{ r^{ n - 1 } } \leq \liminf_{ i \to \infty } \frac{ \mu_i ( B_{ r_0 } ( x ) ) }{ r_0^{ n - 1 } } \leq \frac{ E_0 }{ r_0^{ n - 1 } } < \infty
    \]
    for all $ 0 < r \leq r_0 $.

	To establish the lower bound, we let $ x \in \spt \mu \cap \Omega $ and $ r \in ( 0 , \dist ( x , \partial \Omega ) ) $. We start by claiming that there exists a sequence $ \{ x_i \}_{ i = 1 }^{ \infty } \subset \{ \lv u_{ \varepsilon_i } \rv < 1 \} $ such that $ x_i $ converges to $ x $. Suppose there exists $ s > 0 $ such that $ B_s ( x ) \cap \{ \lv u_{ \varepsilon_i } \rv < 1 \} = \emptyset $ for sufficiently large $ i $ and $ B_s ( x ) \subset \Omega $. Then
	\[
		\mu ( B_s ( x ) ) \leq \liminf_{ i \to \infty } \mu_i ( B_s ( x ) ) = \liminf_{ i \to \infty } \int_{ B_s ( x ) } \Bigg( \varepsilon_i \lv \nabla u_{ \varepsilon_i } \rv^2 + \frac{ \chi_{ ( - 1 , 1 ) } ( u_{ \varepsilon_i } ) }{ \varepsilon_i } \Bigg) = 0,
	\]
	which is a contradiction to the choice of $x$. Therefore, in the following, we fix $\{x_i\}_{i=1}^\infty\subset\{|u_{\varepsilon_i}|<1\}$ such that $x_i\rightarrow x$.
    
    The monotonicity lemma (Lemma \ref{lem: monotonicity}) shows that
	\begin{equation}\label{eq: density lower bound proof 1}
		\begin{split}
			\frac{ \mu ( B_r ( x ) ) }{ r^{ n - 1 } }
			&\geq \lim_{ i \to \infty } \frac{ 1 }{ r^{ n - 1 } } \int_{ B_{ r / 2 } ( x_i ) \cap \{ \lv u_{ \varepsilon_i } \rv < 1 \} } \Bigg( \varepsilon_i \lv \nabla u_{ \varepsilon_i } \rv^2 + \frac{ \chi_{ ( - 1 , 1 ) } ( u_{ \varepsilon_i } ) }{ \varepsilon_i } \Bigg)\\
			&\geq \lim_{ i \to \infty } \frac{ 1 }{ ( 2 \varepsilon_i )^{ n - 1 } } \int_{ B_{ \varepsilon_i } ( x_i ) \cap \{ \lv u_{ \varepsilon_i } \rv < 1 \} } \Bigg( \varepsilon_i \lv \nabla u_{ \varepsilon_i } \rv^2 + \frac{ \chi_{ ( - 1 , 1 ) } ( u_{ \varepsilon_i } ) }{ \varepsilon_i } \Bigg)\\
			&\geq \frac{ 1 }{ 2^{ n - 1 } } \lim_{ i \to \infty } \mathcal{ L }^n ( \{ \lv u_{ \varepsilon_i } ( \varepsilon_i ( \cdot ) + x_i ) \rv < 1 \} \cap B_1 ( 0 ) ).
		\end{split}
	\end{equation}
	We let $ \tilde{ u }_i ( y ) := ( u_{ \varepsilon_i } ( \varepsilon_i y + x_i ) + 1 ) / 2 $ for $ y \in B_1 ( 0 ) $, we have $ \tilde{ u }_i ( 0 ) \in ( 0 , 1 ) $ and
	\[
        \left\{
        \begin{alignedat}{2}
            \Delta \tilde{ u }_i &= 0 \quad&&\text{in}\quad\{ 0 < \tilde{ u }_i < 1 \} \\
            \lv \nabla \tilde{ u }_i \rv &= 1/2 \quad&&\text{on}\quad \partial \{ 0 < \tilde{ u }_i < 1 \}.
        \end{alignedat}
		\right.
	\]
	Note that $ \lV \nabla \tilde{ u }_i \rV_{ L^{ \infty } ( B_1 ( 0 ) ) } \leq 1/2 $. Suppose
	\[
		\lim_{ i \to \infty } \mathcal{ L }^n ( \{ \lv u_{ \varepsilon_i } ( \varepsilon_i ( \cdot ) + x_i ) \rv < 1 \} \cap B_1 ( 0 ) ) = \lim_{ i \to \infty } \mathcal{ L }^n ( \{ 0 < \tilde{ u }_i < 1 \} \cap B_1 ( 0 ) ) = 0.
	\]
	We show that this leads to a contradiction. Then from \eqref{eq: density lower bound proof 1}, the claim follows. Passing to a subsequence if necessary, the functions $ \tilde{ u }_i $ converge locally uniformly in $ B_1 $ to continuous $ \tilde{ u }_{ \infty } $. From the continuity of $ \tilde{ u }_{ \infty } $ and the hypothesis, it follows that $ \tilde{ u }_{ \infty } \equiv 0 $ or $ 1 $ in $ B_1 $. We assume $ \tilde{ u }_{ \infty } \equiv 0 $, without loss of generality. Then we have
	\[
		\lim_{ i \to \infty } \mathcal{ L }^n ( \{ 0 < \tilde{ u }_i \leq 1 \} \cap B_1 ( 0 ) ) = 0.
	\]
	By Proposition \ref{prop: interpolation L1 Lip}, for sufficiently large $ i $, we have
	\[
		\lV \tilde{ u }_i \rV_{ L^{ \infty } ( B_1 ( 0 ) ) } \leq C ( n ) \lV \tilde{ u }_i \rV_{ L^1 ( B_1 ( 0 ) ) }^{ 1 / ( n + 1 ) } \leq C ( n ) \mathcal{ L }^n ( \{ 0 < \tilde{ u }_i \leq 1 \} \cap B_1 ( 0 ) )^{ 1 / ( n + 1 ) } < \frac{ 1 }{ 2 } ,
	\]
	which implies that $ 0 \leq \tilde{ u }_i < 1 / 2 $ in $ B_1 ( 0 ) $. Therefore, $ \tilde{ u }_i $ is particularly a classical solution to the one phase Bernoulli problem \eqref{eq: one phase Bernoulli} in $ B_1 ( 0 ) $. From the clean ball property (Lemma \ref{lem: nondegeneracy}), it follows that $ \{ \tilde{ u }_i > 0 \} \cap B_{ 1 / 2 } ( 0 ) = \emptyset $ for sufficiently large $ i $ so that $ \mathcal{ L }^n ( \{ 0 < \tilde{ u }_i < 1 \} \cap B_1 ( 0 ) ) \leq \delta $, where $ \delta $ is as in Lemma \ref{lem: nondegeneracy}. This is a contradiction to the assumption that $ \tilde{ u }_i ( 0 ) \in ( 0 , 1 ) $. Thus, we have the strict lower bound
	\[
		\frac{ \mu ( B_r ( x ) ) }{ r^{ n - 1 } } \geq \lim_{ i \to \infty } \mathcal{ L }^n ( \{ \lv u_{ \varepsilon_i } ( \varepsilon_i ( \cdot ) + x_i ) \rv < 1 \} \cap B_1 ( 0 ) ) > 0,
	\]
    as desired. Since the lower bound in the above is independent of $ r $, this completes the proof.
\end{proof}

\begin{remark}
\label{rem: differece with HT}
     It is worth mentioning the difference from the proof of the classical Allen--Cahn case \cite[Proposition 4.2]{hutchinson2000convergence}, concerning the above density estimate. In the classical Allen--Cahn setting, the density estimate is obtained by the structure of the potential together with the fact that the transition layer concentrates on the set $ \{ \lv u_i \rv < \alpha \} $ for some $ \alpha \in ( 0 , 1 ) $ depending only on the potential. This concentration property is proved by the standard maximum principle for regular solutions. However, the same argument does not apply in our case, because $u$ fails to be even $C^1$ function at the free boundary. As an alternative, we employ the clean ball property from \cite{chan2025global}.
\end{remark}

\begin{corollary}
\label{cor: uniform}
	Either $ u_{ \varepsilon_i } \to 1 $ or $u_{\varepsilon_i}\to -1$, uniformly on each connected compact subset of $ \Omega \setminus \spt \lV V \rV $. In particular, $ \spt \lv \partial \{ u_0 = 1 \} \rv \subset \spt \lV V \rV $. The terms $ \varepsilon_i \lv \nabla u_{ \varepsilon_i } \rv^2 $ converges uniformly to $ 0 $ on compact subsets of $ \Omega \setminus \spt \lV V \rV $.
\end{corollary}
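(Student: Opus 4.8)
The plan is to exploit the strict positive lower bound on the density established in Proposition \ref{prop: density}: since $x \notin \spt \lV V \rV$ means $\mu(B_s(x)) = 0$ for some small ball, the argument in Proposition \ref{prop: density} (run in reverse) forces $\{|u_{\varepsilon_i}| < 1\}$ to eventually avoid a fixed neighborhood of $x$. First I would fix a connected compact set $K \subset \Omega \setminus \spt\lV V\rV$. For each $x \in K$ choose $s_x > 0$ with $\overline{B_{2s_x}(x)} \subset \Omega \setminus \spt\lV V\rV$; then $\mu(B_{2s_x}(x)) = 0$, hence $\mu_i(B_{2s_x}(x)) \to 0$. I claim this implies $B_{s_x}(x) \cap \{|u_{\varepsilon_i}| < 1\} = \emptyset$ for all large $i$: if not, there is $x_i \in B_{s_x}(x) \cap \{|u_{\varepsilon_i}| < 1\}$, and the very chain of inequalities in \eqref{eq: density lower bound proof 1}, combined with the clean-ball/non-degeneracy argument following it, shows $\liminf_i \mu_i(B_{2s_x}(x)) \geq \liminf_i \tfrac{1}{2^{n-1}}\mathcal{L}^n(\{|u_{\varepsilon_i}(\varepsilon_i(\cdot)+x_i)|<1\}\cap B_1(0)) > 0$, a contradiction. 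Covering $K$ by finitely many such balls $B_{s_{x_j}}(x_j)$, we get a single index $i_0$ so that for $i \geq i_0$ the open set $\{|u_{\varepsilon_i}| < 1\}$ is disjoint from a neighborhood $U \supset K$; shrinking if needed, take $U$ connected and open.

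On $U$, for $i \geq i_0$ the function $u_{\varepsilon_i}$ is continuous (indeed harmonic, being a classical solution with empty free boundary there) and takes values in $\{-1,+1\}$, hence is identically $+1$ or identically $-1$ on the connected set $U$; in particular $u_{\varepsilon_i}$ converges uniformly on $K$ to $\pm 1$. To upgrade "locally on $K$" to a consistent sign across all of $\Omega \setminus \spt\lV V\rV$, I would argue that the sign is locally constant in $i$: on the connected component containing $K$ the set of points where the eventual value is $+1$ and the set where it is $-1$ are both open (by the local argument just given) and cover the component, so one of them is empty. Combined with passing to a subsequence so that the choice of sign stabilizes, this yields a single sign per connected component of $\Omega\setminus\spt\lV V\rV$, as claimed; since $u_{\varepsilon_i} \to u_0$ a.e.\ and $u_0 \in \{\pm 1\}$, on each such component $u_0$ equals that same constant, which gives $\spt|\partial\{u_0 = 1\}| \subset \spt\lV V\rV$ because $\partial\{u_0=1\}$ can only be supported where $u_0$ fails to be locally constant.

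Finally, the uniform convergence $\varepsilon_i |\nabla u_{\varepsilon_i}|^2 \to 0$ on compact subsets of $\Omega\setminus\spt\lV V\rV$ is immediate once we know $\{|u_{\varepsilon_i}| < 1\}$ eventually misses such a compact set: there $|\nabla u_{\varepsilon_i}| = 0$ identically for large $i$ (the gradient of the constant $\pm1$), so the quantity is exactly zero. Alternatively one could run the Modica inequality \eqref{eq: modica} together with $\mu_i(B_{s}(x)) \to 0$ to get an $L^1$-then-pointwise bound, but the direct emptiness argument is cleaner.

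The main obstacle is the first step: extracting from $\mu_i(B_{2s}(x)) \to 0$ the \emph{geometric} conclusion that the transition region $\{|u_{\varepsilon_i}| < 1\}$ vacates a fixed ball, rather than merely shrinking in measure. This is exactly where the clean-ball property (Lemma \ref{lem: nondegeneracy}) is essential — without it, a thin tendril of $\{|u_{\varepsilon_i}|<1\}$ could persist near $x$ while contributing negligible energy, and we could not conclude that $u_{\varepsilon_i}$ is literally constant near $x$. All the other steps (connectedness bookkeeping, the trivial gradient bound) are routine.
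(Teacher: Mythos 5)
Your proposal is correct and uses the same key ingredients as the paper: the monotonicity formula, the interpolation estimate (Proposition~\ref{prop: interpolation L1 Lip}), and the clean-ball property (Lemma~\ref{lem: nondegeneracy}). The logical ordering is slightly different and, in my view, cleaner. The paper argues directly by contradiction on the value of $u_{\varepsilon_i}$: it supposes $|u_{\varepsilon_i}(x_i)+1|>\delta_0$ for some $x_i \in K$ and rescales around $x_i$, writing $\tilde u_i(0) \in (0,1)$. This is not literally guaranteed by the hypothesis, since such an $x_i$ could lie in the interior of $\{u_{\varepsilon_i}=1\}$ rather than in the transition region; making that step rigorous requires a small extra observation. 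Your version sidesteps this entirely: you first show that $\{|u_{\varepsilon_i}|<1\}$ vacates a fixed neighborhood of $K$ for large $i$ (by contradiction, rescaling around a transition point $x_i \in B_{s_x}(x) \cap \{|u_{\varepsilon_i}|<1\}$, where $\tilde u_i(0) \in (0,1)$ genuinely holds), and only then conclude that $u_{\varepsilon_i}$ is locally constant $\pm 1$ near $K$, from which the uniform convergence, the sign consistency, and the vanishing of $\varepsilon_i|\nabla u_{\varepsilon_i}|^2$ all follow trivially. One minor bookkeeping point in your claim step: the chain of inequalities should be written with the density normalization, i.e.\ $\frac{\mu_i(B_{2s_x}(x))}{s_x^{n-1}} \geq \frac{\mu_i(B_{\varepsilon_i}(x_i))}{\varepsilon_i^{n-1}} \geq \mathcal{L}^n(\{|u_{\varepsilon_i}(\varepsilon_i\cdot+x_i)|<1\}\cap B_1)$, before passing to $\liminf$; but the conclusion is the same. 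Also worth recording: the preliminary fact that $u_0$ is constant on each component of $\Omega \setminus \spt\lV V\rV$ (so that the eventual sign of $u_{\varepsilon_i}$ is determined) comes from $\int_U |\nabla u_0| \leq \tfrac12 \liminf_i \mu_i(U) = 0$ for any open $U$ compactly contained in such a component, which you invoke implicitly in the $L^1$-matching step.
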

\begin{proof}
    This corollary follows from the argument using the clean ball property and Proposition \ref{prop: interpolation L1 Lip} for the previous proposition. Indeed, let $ U $ be an open set in a connected component of $ \Omega \setminus \spt \lV V \rV $ and take arbitrary compact set $ K \subset U $. We may assume that $ u_{ \varepsilon_i } \to - 1 $ in $ L^1 ( U ) $ and a.e.\,pointwise without loss of generality from Proposition \ref{prop: BV convergence of u_i}. For a contradiction, we suppose that there exists $ \delta_0 > 0 $ such that, for all $ N \in \N $, there exist $ i \geq N $ and $ x_i \in K $ such that $ \lv u_{ \varepsilon_i } ( x_i ) + 1 \rv > \delta_0 $. We let $ \tilde{ u }_i ( y ) := ( u_{ \varepsilon_i } ( \varepsilon_i y + x_i ) + 1 ) / 2 $ as in the proof of Proposition \ref{prop: density}. By the same argument as in the proof of Proposition \ref{prop: density}, we obtain $ \tilde{ u }_i ( 0 ) \in ( 0 , 1 ) $ and
	\[
		\lV \tilde{ u }_i \rV_{ L^{ \infty } ( B_1 ( 0 ) ) }  < \frac{ 1 }{ 2 },
	\]
	which implies that $ 0 \leq \tilde{ u }_i < 1 / 2 $ in $ B_1 ( 0 ) $. Therefore, $ \tilde{ u }_i $ is particularly a classical solution to the one phase Bernoulli problem \eqref{eq: one phase Bernoulli} in $ B_1 ( 0 ) $. From the clean ball property (Lemma \ref{lem: nondegeneracy}), it follows that $ \{ \tilde{ u }_i > 0 \} \cap B_{ 1 / 2 } ( 0 ) = \emptyset $ for sufficiently large $ i $ so that $ \mathcal{ L }^n ( \{ 0 < \tilde{ u }_i < 1 \} \cap B_1 ( 0 ) ) \leq \delta $, where $ \delta $ is as in Lemma \ref{lem: nondegeneracy}. This is a contradiction to the assumption that $ \tilde{ u }_i ( 0 ) \in ( 0 , 1 ) $.
\end{proof}

We then prove the discrepancy vanishes in the limit in $L_\text{loc}^1(\Omega)$.

\begin{prop}
\label{prop: dicrepancy vanishes}
	Let $ \xi_i := \varepsilon_i \lv \nabla u_{ \varepsilon_i } \rv^2 - \chi_{ ( - 1 , 1 ) } ( u_{ \varepsilon_i } ) / \varepsilon_i $. We then have $ \lv \xi_i \rv \to 0 $ in $ L^1_{ loc } ( \Omega ) $.
\end{prop}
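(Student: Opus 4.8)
The plan is to test the stationarity identity against a radial cutoff multiplied by a test function, exactly as in the classical Hutchinson–Tonegawa argument, and extract an integrated decay of the discrepancy from the monotonicity formula. Concretely, fix $\tilde\Omega\subset\subset\Omega$ and $\phi\in C_c^1(\Omega;[0,\infty))$. First I would record the consequence of the monotonicity formula (Lemma \ref{lem: monotonicity}): integrating \eqref{eq: monotonicity} in $r$ from $0$ to $r_0$ and using the uniform energy bound (Assumption \ref{assm: energy}) together with Modica's inequality (Lemma \ref{lem: modica}, which makes $\xi_i\le 0$ pointwise, so $|\xi_i|=-\xi_i$), one obtains a bound of the form
\[
\int_{B_{r_0}(x)\cap\{|u_{\varepsilon_i}|<1\}}\frac{|\xi_i|}{|y-x|^{n-1}}\,dy\le C\Big(\frac{\mu_i(B_{r_0}(x))}{r_0^{n-1}}-\liminf_{r\to0}\frac{\mu_i(B_r(x))}{r^{n-1}}\Big)+C\,\frac{\mu_i(B_{r_0}(x))}{r_0^{n-1}},
\]
uniformly in $i$, for $x\in\tilde\Omega$ and $r_0=\dist(\tilde\Omega,\partial\Omega)/2$. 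The key point is that the left side controls $\int|\xi_i|$ up to the weight, so $\int_{\tilde\Omega}|\xi_i|$ is uniformly bounded; this is automatic anyway from Modica and the energy bound, since $0\le -\xi_i\le \varepsilon_i|\nabla u_{\varepsilon_i}|^2+\chi_{(-1,1)}(u_{\varepsilon_i})/\varepsilon_i$ and $\mu_i(\Omega)\le E_0$.

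The substantive step is to upgrade the uniform $L^1$ bound to convergence to zero. Here I would follow the classical strategy: let $|\xi_i|\,\mathcal L^n\lfloor\Omega \rightharpoonup \zeta$ weakly-* as Radon measures along a subsequence (using the uniform mass bound just established), and show $\zeta=0$. Because $\xi_i\le 0$, one has $\varepsilon_i|\nabla u_{\varepsilon_i}|^2+\chi_{(-1,1)}(u_{\varepsilon_i})/\varepsilon_i=2\varepsilon_i|\nabla u_{\varepsilon_i}|^2-|\xi_i|=2\chi_{(-1,1)}(u_{\varepsilon_i})/\varepsilon_i-|\xi_i|$, so passing to the limit gives $\mu=2\tau-\zeta$ and $\mu=2\sigma-\zeta$ where $\tau,\sigma$ are the weak-* limits of $\varepsilon_i|\nabla u_{\varepsilon_i}|^2\,\mathcal L^n$ and $\chi_{(-1,1)}(u_{\varepsilon_i})/\varepsilon_i\,\mathcal L^n$ respectively (both $\le\mu$, hence $\tau=\sigma$). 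It remains to show $\zeta$ has no mass: on $\Omega\setminus\spt\lV V\rV$ this is immediate from Corollary \ref{cor: uniform}, since $\varepsilon_i|\nabla u_{\varepsilon_i}|^2\to0$ uniformly there and $\chi_{(-1,1)}(u_{\varepsilon_i})\to 0$ a.e. with $\mu$ having no mass there; so $\spt\zeta\subset\spt\mu$. On $\spt\mu$ I would argue by the monotonicity/density machinery: the weighted estimate above shows that for $\mu$-a.e.\ $x$, $\lim_{r\to 0} r^{1-n}\int_{B_r(x)\cap\{|u_{\varepsilon_i}|<1\}}|\xi_i|$ is controlled by the oscillation of the monotone quantity $r\mapsto r^{1-n}\mu_i(B_r(x))$, which converges (by the density estimate Proposition \ref{prop: density} and monotonicity) to a quantity whose radial derivative is the $L^1_{loc}$ weak limit of the right-hand side of \eqref{eq: monotonicity}; at a point where the density of $\mu$ exists and the tangent measure is a plane with integer multiplicity (which is $\mu$-a.e., using rectifiability), the discrepancy term in the limiting monotonicity formula must vanish, forcing $\zeta(B_r(x))/r^{n-1}\to 0$, hence $\zeta\ll\mu$ with zero density $\mu$-a.e., so $\zeta=0$.

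I expect the main obstacle to be making the last step—vanishing of $\zeta$ on $\spt\mu$—rigorous without the pointwise $C^1$/elliptic regularity available in the classical case. In Hutchinson–Tonegawa one uses that $\varepsilon_i|\nabla u_{\varepsilon_i}|^2-W(u_{\varepsilon_i})/\varepsilon_i\to 0$ via an ODE-type analysis along the normal direction of the interface, exploiting smoothness of $u_{\varepsilon_i}$; here $u_{\varepsilon_i}$ is merely Lipschitz and the nonlinearity is concentrated on the free boundary. The cleanest route is likely to avoid any blow-up-of-$u$ argument and instead argue purely at the level of measures: combine (i) the weighted monotonicity estimate, (ii) the fact that $r^{1-n}\mu_i(B_r(x))$ is monotone in $r$ so its limit $r^{1-n}\mu(B_r(x))$ is monotone and hence differentiable a.e., and (iii) the identity, valid in the distributional sense after passing to the limit, that the radial derivative of $r^{1-n}\mu(B_r(x))$ equals a nonnegative measure whose ``bulk'' part is exactly $-\zeta$-type; since $r^{1-n}\mu(B_r(x))$ has a finite limit as $r\to 0$ (by Proposition \ref{prop: density}) and is bounded above, its total variation in $r$ over $(0,r_0)$ is finite, which after integrating in $x$ against $\phi$ and using Fubini forces $\int_{\tilde\Omega}\phi\,d\zeta$ to be bounded by $\int$ of the total oscillation, and then a rescaling/blow-up at $\mu$-a.e.\ point (where monotonicity is asymptotically constant, so the oscillation vanishes) yields $\zeta=0$. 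An alternative, possibly shorter, is to prove directly that the rescaled solutions $\tilde u_i(y)=(u_{\varepsilon_i}(\varepsilon_i y+x_i)+1)/2$ at a density point have subsequential limits that are (half-space) solutions of the one-phase Bernoulli problem for which $|\nabla \tilde u_\infty|^2=\chi_{\{0<\tilde u_\infty<1\}}$ identically—this is where the Bernoulli-problem structure, rather than generic elliptic theory, does the work—and conclude the discrepancy vanishes in the blow-up, hence $\zeta=0$.
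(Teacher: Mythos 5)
Your proposal correctly identifies the skeleton of the paper's argument — take the weak-$*$ limit $\zeta$ of $|\xi_i|\,\mathcal L^n$, use the integrated monotonicity formula (via Fubini, which is exactly your ``weighted monotonicity estimate''), and then kill $\zeta$ by a density comparison against $\mu$ using Proposition~\ref{prop: density} and the standard Radon--Nikodym differentiation lemma. The paper's version of the crucial step is a short direct contradiction: assume $|\xi|(B_r(x))\ge b\,r^{n-1}$ for all $0<r\le R$, integrate \eqref{eq: monotonicity} from $r_1$ to $R$ with $r_1:=R\min\{\exp(-4D)/b,1/2\}$, and use the uniform upper bound $r^{1-n}\mu_i(B_r(x))\le D:=E_0/r_0^{n-1}$ from Lemma~\ref{lem: monotonicity} to land at $D\ge \tfrac{b}{2}\log(R/r_1)\ge 2D$. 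This yields $\liminf_{r\to0}|\xi|(B_r(x))/r^{n-1}=0$ at every $x\in\spt\mu$ with no blow-up argument and no reference to the fine structure of the limit.

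The genuine issue with your write-up is a circularity: you invoke ``rectifiability'' and the existence $\mu$-a.e.\ of a tangent plane ``with integer multiplicity'' to conclude that the discrepancy must vanish at such points. At this stage of the paper neither rectifiability (Proposition~\ref{prop: rec}) nor integrality (Theorem~\ref{thm: integrality}) is available — rectifiability is proved \emph{after} and \emph{using} the vanishing of the discrepancy, precisely because Allard's theorem needs a locally finite first variation, which in turn needs $\delta V_i(g)=-\int Dg\cdot(\nu\otimes\nu)\xi_i\to 0$. Your first route can be salvaged by dropping all mention of tangent planes and integer multiplicity and replacing the final blow-up by the logarithmic-divergence contradiction above, which uses only the monotonicity formula and the upper density bound. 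Your proposed alternative — classifying blow-ups at density points as one-phase Bernoulli half-space solutions with $|\nabla\tilde u_\infty|^2=\chi_{\{0<\tilde u_\infty<1\}}$ — is a legitimately different route, but it would require a compactness/convergence theory for the $\varepsilon$-scale rescalings plus a Liouville-type classification, far heavier machinery than what the paper uses and not obviously simpler than the direct measure-theoretic argument.
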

\begin{proof}
	One can take the limit Radon measure $ \lv \xi \rv $ such that $ \int \lv \xi_i \rv \phi \to \lv \xi \rv ( \phi ) $ for all $ \phi \in C_c ( \Omega ) $. We first prove that for any $\tilde{\Omega}\subset\subset\Omega$,
	\begin{equation}
	\label{eq: density of discrepancy}
		\liminf_{ r \to + 0 } \frac{ \lv \xi \rv ( B_r ( x ) ) }{ r^{ n - 1 } } = 0 \text{ for all } x \in \spt \mu \cap \tilde{ \Omega }.
	\end{equation}
	We suppose the converse, that is, there exist $ x \in \spt \mu \cap \tilde{ \Omega } $, $ R > 0 $ and $ b > 0 $ such that $ R \leq \dist ( x , \partial \Omega ) ( =: r_0 ) $ and $ \lv \xi \rv ( B_r ( x ) ) \geq b r^{ n - 1 } $ for all $ 0 < r \leq R $. Using Proposition \ref{prop: density} and the definition of $ \lv \xi \rv $, we choose a large $ i $ such that
	\[
		\frac{ 1 }{ r^{ n - 1 } } \int_{ B_r ( x ) } \Bigg( \varepsilon_i \lv \nabla u_{ \varepsilon_i } \rv^2 + \frac{ \chi_{ ( - 1 , 1 ) } ( u_{ \varepsilon_i } ) }{ \varepsilon_i } \Bigg) \leq D , \quad \frac{ 1 }{ r^{ n - 1 } } \int_{ B_r ( x ) } \lv \xi^i \rv \geq \frac{ b }{ 2 }
	\]
	for all $ 0 < r \leq R $, where $ D = E_0 / r_0^{ n - 1 } > 0 $. Define $ r_1 = R \min \{ \exp ( - 4 D ) / b , 1 / 2 \} ( < R ) $. From \eqref{eq: monotonicity}, it follows that
	\begin{equation*}
		\begin{split}
			D &\geq \frac{ 1 }{ R^{ n - 1 } } \int_{ B_R ( x ) } \Bigg( \varepsilon_i \lv \nabla u_{ \varepsilon_i } \rv^2 + \frac{ \chi_{ ( - 1 , 1 ) } ( u_{ \varepsilon_i } ) }{ \varepsilon_i } \Bigg)
			\geq \int_{ r_1 }^R \frac{ 1 }{ r^n } \int_{ B_r ( x ) \cap \{ |u_{ \varepsilon_i }|<1 \} } \lv \xi^i \rv \, d y d r\\
			&\geq \frac{ b }{ 2 } \int_{ r_1 }^R \frac{ 1 }{ r } \, d r
			= \frac{ b }{ 2 } \log \Bigg( \frac{ R }{ r_1 } \Bigg) \geq 2 D,
		\end{split}
	\end{equation*}
	which is a contradiction. Thus \eqref{eq: density of discrepancy} is proven.

	Combined with Proposition \ref{prop: density} and $ \spt \lv \xi \rv \subset \spt \mu $, we have
	\[
		\liminf_{ r \to + 0 } \frac{ \lv \xi \rv ( B_r ( x ) ) }{ \mu ( B_r ( x ) ) } = 0 \text{ for all } x \in \spt \lv \xi \rv.
	\]
	A standard result in measure theory (see \cite[Lemma 1.2]{evans2015measure} for example) shows that $ \lv \xi \rv = 0 $. This concludes that $ \lv \xi_i \rv \to 0 $ in $ L^1_{ loc } ( \Omega ) $.
\end{proof}

As a consequence, we deduce that the limiting varifold is rectifiable and stationary.

\begin{prop}
\label{prop: rec}
	The limit Radon measure $ \mu $ satisfies $ \mu = \lV V \rV = \lim_{ i \to \infty } \lV V_i \rV $ and is rectifiable and stationary, that is, the first variation of $ V $ is $ 0 $.
\end{prop}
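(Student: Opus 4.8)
The plan is to follow the classical Hutchinson--Tonegawa scheme, now powered by the two main technical inputs established above: the density bounds of Proposition \ref{prop: density} and the vanishing of the discrepancy from Proposition \ref{prop: dicrepancy vanishes}. First I would observe that $\mu = \lV V \rV$ holds by construction: indeed $\lV V_i \rV = \mu_i \lfloor_{\{|\nabla u_{\varepsilon_i}|\neq 0\}}$, and since $\mu_i$ restricted to $\{|\nabla u_{\varepsilon_i}|=0\}\cap\{|u_{\varepsilon_i}|<1\}$ carries only the potential term $\chi_{(-1,1)}(u_{\varepsilon_i})/\varepsilon_i$, which is controlled by the discrepancy $|\xi_i|$ (on that set $\varepsilon_i|\nabla u_{\varepsilon_i}|^2=0$, so the potential equals $-\xi_i$), Proposition \ref{prop: dicrepancy vanishes} forces $\mu_i\lfloor_{\{|\nabla u_{\varepsilon_i}|=0\}}\to 0$ locally. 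Hence $\mu=\lim_i\lV V_i\rV=\lV V\rV$.

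Next, for rectifiability I would invoke Allard's rectifiability theorem: an $(n-1)$-varifold $V$ with $\lV V\rV$-a.e.\ positive lower density and locally bounded first variation $\lV\delta V\rV$ is rectifiable. The positive lower density bound $\liminf_{r\to 0}\mu(B_r(x))/r^{n-1}>0$ for $\mu$-a.e.\ (in fact every) $x\in\spt\mu$ is exactly Proposition \ref{prop: density}. For the first variation, I would pass to the limit in the formula
\[
    \delta V_i(g) = \int_{\{|\nabla u_{\varepsilon_i}|\neq 0\}} Dg\cdot\bigg(I-\frac{\nabla u_{\varepsilon_i}}{|\nabla u_{\varepsilon_i}|}\otimes\frac{\nabla u_{\varepsilon_i}}{|\nabla u_{\varepsilon_i}|}\bigg)\,d\mu_i.
\]
Using the identity $\dv(\varepsilon_i|\nabla u_{\varepsilon_i}|^2 g - 2\varepsilon_i(\nabla u_{\varepsilon_i}\cdot g)\nabla u_{\varepsilon_i}) = (\text{terms involving }\Delta u_{\varepsilon_i}=0 \text{ in }\{|u|<1\})$ together with the stationarity identity \eqref{eq: stationary} (valid since classical solutions are stationary, Remark \ref{rem: conseqences from def}), one rewrites $\delta V_i(g)$ up to an error controlled by $\int |\xi_i|\,|Dg|$; precisely,
\[
    \delta V_i(g) = -\frac{1}{\varepsilon_i}\int_{\{|u_{\varepsilon_i}|<1\}} \dv g\ \chi\big(\{|\nabla u_{\varepsilon_i}|= 0\}\big)\cdots - \int \xi_i\,\dv g\,,
\]
so that $|\delta V_i(g)|\le \int |\xi_i|\,|Dg|\to 0$ on any compact set by Proposition \ref{prop: dicrepancy vanishes}. (This is the standard computation: the stationary first variation of $J_{\varepsilon_i}$ differs from $\delta V_i$ only by a discrepancy term.) Hence $\delta V_i(g)\to 0$ for every $g\in C_c^1(\Omega;\R^n)$, and since $\delta V_i(g)\to\delta V(g)$ by varifold convergence, we get $\delta V=0$; in particular $\lV\delta V\rV\equiv 0$ is trivially locally bounded.

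Finally, with positive lower density $\mu$-a.e.\ and $\lV\delta V\rV=0$ locally finite, Allard's rectifiability theorem applies and $V$ is rectifiable; the computation above simultaneously shows $\delta V=0$, i.e.\ $V$ is stationary. The main obstacle is the first-variation computation: one must carefully justify the integration by parts on $\{|u_{\varepsilon_i}|<1\}$ despite $u_{\varepsilon_i}$ being only Lipschitz across the free boundary $\partial\{|u_{\varepsilon_i}|<1\}$, and identify the leftover terms precisely with the discrepancy $\xi_i$ (so that the error vanishes in the limit). Since classical solutions are stationary by Remark \ref{rem: conseqences from def}, this bookkeeping reduces to algebra comparing $\delta J_{\varepsilon_i}(u_{\varepsilon_i})[g]$ with $\delta V_i(g)$, but the matching of the $\{|\nabla u_{\varepsilon_i}|=0\}$-contribution requires the discrepancy control from Proposition \ref{prop: dicrepancy vanishes} and the identification $\mu=\lV V\rV$ from the first step.
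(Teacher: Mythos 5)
Your proposal follows essentially the same route as the paper: you identify $\mu = \lV V\rV$ from the construction, use the stationarity identity $\delta J_{\varepsilon_i}(u_{\varepsilon_i})[g]=0$ (Remark \ref{rem: conseqences from def}) to rewrite $\delta V_i(g)$ up to a term bounded by $\int|\xi_i||Dg|$, invoke Proposition \ref{prop: dicrepancy vanishes} to conclude $\delta V=0$, and then apply Allard's rectifiability theorem with the lower density bound from Proposition \ref{prop: density}. The only small deviations are presentational: the paper avoids the integration-by-parts worry entirely (the stationarity identity is a definitional consequence, no further integration is needed, only algebraic rearrangement), and the identity $\mu=\lV V\rV$ holds already at the level of each $\mu_i$ because $\{\nabla u_{\varepsilon_i}=0\}\cap\{|u_{\varepsilon_i}|<1\}$ has Lebesgue measure zero for a nonconstant harmonic function, so invoking Proposition \ref{prop: dicrepancy vanishes} for that step is unnecessary though not incorrect.
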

\begin{proof}
	The first part of this proposition is obvious by the definition of the weight. Note that $ \mu_i = 2 \varepsilon_i \lv \nabla u_{ \varepsilon_i } \rv^2 + \xi_i $. By the definition of $ \delta V_i $ and \eqref{eq: stationary}, we have
	\begin{equation*}
		\begin{split}
			\delta V_i ( g ) &= \int_{ \{ \lv \nabla u_{ \varepsilon_i } \rv \neq 0 \} } \bigg( \varepsilon_i \lv \nabla u_{ \varepsilon_i } \rv^2 + \frac{ \chi_{ ( - 1  , 1 ) } ( u_{ \varepsilon_i } ) }{ \varepsilon_i } \bigg) \dv g - 2 \varepsilon_i \nabla u_{ \varepsilon_i } \cdot D g \nabla u_{ \varepsilon_i }\\
			&\quad \quad - D g \cdot \bigg( \frac{ \nabla u_{ \varepsilon_i } }{ \lv \nabla u_{ \varepsilon_i } \rv } \otimes \frac{ \nabla u_{ \varepsilon_i } }{ \lv \nabla u_{ \varepsilon_i } \rv } \bigg) \xi_i\\
			&= - \int_{ \{ \lv \nabla u_{ \varepsilon_i } \rv \neq 0 \} }  D g \cdot \bigg( \frac{ \nabla u_{ \varepsilon_i } }{ \lv \nabla u_{ \varepsilon_i } \rv } \otimes \frac{ \nabla u_{ \varepsilon_i } }{ \lv \nabla u_{ \varepsilon_i } \rv } \bigg) \xi_i
		\end{split}
	\end{equation*}
	for all $ g \in C^1_c ( \Omega ; \R^n ) $. Combined with this, Proposition \ref{prop: dicrepancy vanishes} and the varifold convergence $ V_i \to V $, we obtain
	\[
		\delta V = 0,
	\]
	that is, $ V $ is stationary. Moreover, since $ \lV \delta V \rV $ is particularly a Radon measure on $ \Omega $ and the lower density estimate in Proposition \ref{prop: density} holds, we conclude that $ V $ is rectifiable by Allard's rectifiability theorem \cite[5.5 (1)]{allard1972first}.
\end{proof}

\section{Integrality of the Limit Varifold}
\label{seq: Integrality}
We let
\begin{equation}\label{eq: def of energy and discrepancy}
    e_{ \varepsilon } := \varepsilon \lv \nabla u_{ \varepsilon } \rv^2 + \frac{ \chi_{ ( - 1  , 1 ) } ( u_{ \varepsilon } ) }{ \varepsilon } , \quad \xi_{ \varepsilon } := \varepsilon \lv \nabla u_{ \varepsilon } \rv^2 - \frac{ \chi_{ ( - 1  , 1 ) } ( u_{ \varepsilon } ) }{ \varepsilon }.
\end{equation}
Define $ T : \R^n \to \R^{ n - 1 } $ by $ T ( x ) = ( x_1 , \ldots , x_{ n - 1 } ) $ and $ T^{ \perp } : \R^n \to \R $ by $ T ( x ) = x_n $, where $ x = ( x_1 , \ldots , x_{ n - 1 } , x_n ) $. Moreover, define $ \nu $ by
\[
	\nu = \begin{cases}
		\frac{ \nabla u_{ \varepsilon } }{ \lv \nabla u_{ \varepsilon } \rv } &( \lv \nabla u_{ \varepsilon } \rv \neq 0 ),\\
		0 &( \lv \nabla u_{ \varepsilon } \rv = 0 ).
	\end{cases}
\]

First, we quote the vertical monotonicity formula \cite[Lemma 5.4]{hutchinson2000convergence}. In the following lemma, $ Y $ represents a set of points on transition layers $ \{ \lv u_{ \varepsilon } \rv < 1 \} $ where $ u_{ \varepsilon } = t $. This lemma states that parallel lines to divide the points of $ Y $ can be drawn and the monotonicity formula holds for each strip domain even when transition layers are packed in a narrow region.

\begin{lemma}
\label{lem: vartically monotonicity formula}
    Suppose
    \begin{enumerate}
        \item $ N \geq 1 $ in an integer, $ Y $ is a subset of $ \R^n $, $ 0 < R < \infty $, $ 0 < \eta < 1 $, $ 0 < a < \infty $, $ 0 < \varepsilon < 1 $, $ 0 < E_0 < \infty $, $ 0 < \delta < 1 $ and $ - \infty \leq t_1 < t_2 \leq \infty $.
        \item  $ Y $ has no more than $ N + 1 $ elements, $ T ( y ) = 0 $ for all $ y \in Y $, $ Y \subset \{ \lv u_{ \varepsilon } \rv < 1 \} \cap \{ t_1 + a < x_n < t_2 - a \} $ and $ | y - x | > 3 a $ for any distinct $ x , y \in Y $.
        \item $ ( M + 1 ) \diam Y < R $ and let $ \tilde{R} := M \diam Y $.
        \item On $ \{ x \in \R^n \mid \dist ( x , Y ) < R \} $, $ u_{ \varepsilon } $ is a classical solution of $ J_\varepsilon $.
        \item For each $ x = ( x_1 , \ldots , x_n ) \in Y $,
        \[
            \int_0^R \frac{ 1 }{ \tau^n } \int_{ B_{ \tau } ( x ) \cap \{ y_n = t_j \} } \lv e_{ \varepsilon } ( y_n - x_n ) - \varepsilon \partial_n u_{ \varepsilon } ( y - x ) \cdot \nabla u_{ \varepsilon } \rv \, d \mathcal{ H }^{ n - 1 } ( y ) d \tau \leq \eta
        \]
        for $ j = 1 , 2 $ (note that in case of $t_1=-\infty$ or $t_1=\infty$, the above condition holds trivially).
        \item For each $ x \in Y 
        $ and $ a \leq r \leq R $,
        \[
            \int_{ B_r ( x ) \cap \{ \lv u_{ \varepsilon } \rv < 1 \} } \lv \xi_{ \varepsilon } \rv + ( 1 - ( \nu_n )^2 ) \varepsilon \lv \nabla u_{ \varepsilon } \rv^2 \leq \eta r^{ n - 1 } \text{ and } \int_{ B_r ( x ) \cap \{ \lv u_{ \varepsilon } \rv < 1 \} } \varepsilon \lv \nabla u_{ \varepsilon } \rv^2 \leq E_0 r^{ n - 1 }.
        \]
    \end{enumerate}
    Then the following hold:
    \begin{enumerate}
        \item There exists $ t_3 \in ( t_1 , t_2 ) $ such that for all $x\in Y$, $ \lv x_n - t_3 \rv \geq \frac{3}{2}(1-\delta) $ and 
        \begin{equation*}
        \begin{split}
            \int_0^{ \tilde{R} } \frac{ 1 }{ \tau^n } &\int_{ B_{ \tau } ( x ) \cap \{ y_n = t_3 \} } \lv e_{ \varepsilon } ( y_n - x_n ) - \varepsilon \partial_n u_{ \varepsilon } ( y - x ) \cdot \nabla u_{ \varepsilon } \rv \, d \mathcal{ H }^{ n - 1 } ( y ) d \tau\\
            &\leq \frac{( N + 1 ) N M}{\delta} \left( \eta + E_0^{ 1 / 2 } \eta^{ 1 / 2 } \right).
        \end{split}
        \end{equation*}
        \item Let
        \begin{equation*}
            \begin{split}
                &Y_1 := Y \cap \{ t_1 < x_n < t_3 \} , \quad Y_2 := Y \cap \{ t_3 < x_n < t_2 \},\\
                &\mathcal{S}_0 := \{ x \mid t_1 < x_n < t_2 \text{ and } \dist( x , Y ) < R \},\\
                &\mathcal{S}_1 := \{ x \mid t_1 < x_n < t_3 \text{ and } \dist( x , Y ) < \tilde{R} \},\\
                &\mathcal{S}_2 := \{ x \mid t_3 < x_n < t_2 \text{ and } \dist( x , Y ) < \tilde{R} \}.
            \end{split}
        \end{equation*}
        Then $ Y_1 $ and $ Y_2 $ are non-empty and for any $x\in Y$,
        \begin{equation*}
            \begin{split}
                &\frac{ 1 }{ r^{ n -1 } } \left( \int_{ \mathcal{S}_1 \cap B_r( x ) \cap \{ \lv u_{ \varepsilon } \rv < 1 \} } e_{ \varepsilon } + \int_{ \mathcal{S}_2 \cap B_r( x ) \cap \{ \lv u_{ \varepsilon } \rv < 1 \} } e_{ \varepsilon } \right)\\
                &\leq \left( 1 + \frac{ 1 }{ M } \right)^{ n - 1 } \frac{ 1 }{ R^{ n - 1 } } \int_{ \mathcal{ S }_0 \cap \{ \lv u_{ \varepsilon } \rv < 1 \} } e_{ \varepsilon } + c ( n , N , M ) \left( \eta + E_0^{ 1 / 2 } \eta^{ 1 / 2 } \right)
            \end{split}
        \end{equation*}
        holds for any $ a \leq t < \tilde{ R } $.
    \end{enumerate}
\end{lemma}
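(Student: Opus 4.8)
The plan is to reproduce the argument of \cite[Lemma 5.4]{hutchinson2000convergence}, the free--boundary version being obtained by replacing every appeal to the Euler--Lagrange equation by the stationarity identity \eqref{eq: stationary} and by the monotonicity computation of Lemma \ref{lem: monotonicity}. Both of these already involve only integrals over the transition region $\{\lv u_\varepsilon\rv<1\}$ and account for the free boundary through the term $\chi_{(-1,1)}(u_\varepsilon)/\varepsilon$; so, exactly as in the proofs of Lemmas \ref{lem: modica} and \ref{lem: monotonicity}, the Lipschitz bound of Remark \ref{rem: conseqences from def} together with the condition $\lv\nabla u_\varepsilon\rv=1/\varepsilon$ on $\partial\{\lv u_\varepsilon\rv<1\}$ is all the regularity needed to justify the integrations by parts and the passage to limits in the mollified test fields below. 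Throughout one fixes $x\in Y$, translates so that $x=0$, and writes $y=(y',y_n)$ with $y'=T(y)$.

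For conclusion (1) I would introduce, for $s\in(t_1,t_2)$, the vertical moment
\[
    \Phi(s):=\int_0^{R}\frac{1}{\tau^{n}}\int_{B_\tau(x)\cap\{y_n=s\}}\left\lvert\, e_{\varepsilon}\,(y_n-x_n)-\varepsilon\,\partial_n u_{\varepsilon}\,(y-x)\cdot\nabla u_{\varepsilon}\,\right\rvert\,d\mathcal H^{n-1}\,d\tau,
\]
and test \eqref{eq: stationary} with vector fields of the product form $g(y)=\psi(y_n)\bigl(\rho(\lv y'\rv)\,y',\,\sigma(y)\bigr)$, where $\rho$ mollifies $\chi_{(0,\tau)}$, $\psi$ mollifies $\chi_{(s_1,s_2)}$, and $\sigma$ is the auxiliary $n$-th component chosen so that the slice integrand produced on $\{y_n=s_j\}$ is exactly the one appearing in $\Phi$. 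Sending $\rho,\psi\to\chi$ and integrating in $\tau$ against $\tau^{-1}\,d\tau$, as in the derivation of \eqref{eq: monotonicity}, should yield a flux estimate of the schematic shape
\[
    \Phi(s_2)\le\Phi(s_1)+C\!\!\int_{\{s_1<y_n<s_2\}\cap B_R(x)\cap\{\lv u_\varepsilon\rv<1\}}\!\!\bigl(\lv\xi_\varepsilon\rv+(1-\nu_n^2)\varepsilon\lv\nabla u_\varepsilon\rv^2\bigr)+C\,E_0^{1/2}\eta^{1/2},
\]
the last term coming from Cauchy--Schwarz against $\varepsilon\lv\nabla u_\varepsilon\rv^2$ and the second bound of hypothesis (6). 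I would then apply this across the at most $N$ consecutive gaps of $Y$ along the $x_n$--axis and across the two extreme gaps — where $\Phi(t_1),\Phi(t_2)\le\eta$ by hypothesis (5) — and average $\Phi$ over the portion of an admissible gap lying at distance $\ge\tfrac32(1-\delta)$ from every point of $Y$, which is nonempty since $Y\subset\{t_1+a<x_n<t_2-a\}$ has at most $N+1$ points, all on the $x_n$--axis and separated by more than $3a$. This produces a level $t_3$ lying strictly between two consecutive points of $Y$, with $\lv x_n-t_3\rv\ge\tfrac32(1-\delta)$ for every $x\in Y$ and with the claimed bound on the truncated moment, the bulk terms being absorbed into the right--hand side by the first bound of hypothesis (6) and the prefactor $(N+1)NM/\delta$ recording the number of gaps, the radius ratio $R/\diam Y\le M+1$, and the length of the admissible interval for $t_3$; since $t_3$ separates two points of $Y$, both $Y_1$ and $Y_2$ are nonempty.

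For conclusion (2) I would use that, with this $t_3$, the flux of the Allen--Cahn stress field through $\{y_n=t_3\}\cap B_R(x)$ is controlled by the truncated moment just bounded, and the fluxes through $\{y_n=t_1\},\{y_n=t_2\}$ by hypothesis (5). Running the monotonicity computation of Lemma \ref{lem: monotonicity} on the slab--truncated domains $\mathcal S_1$ and $\mathcal S_2$ — i.e.\ testing \eqref{eq: stationary} with $g(y)=\psi(y_n)(y-x)\rho(\lv y-x\rv)$, $\psi$ mollifying $\chi_{(t_1,t_3)}$ resp.\ $\chi_{(t_3,t_2)}$ — should show that $t\mapsto t^{1-n}\int_{\mathcal S_k\cap B_t(x)\cap\{\lv u_\varepsilon\rv<1\}}e_\varepsilon$ is nondecreasing up to an additive error $c(n,N,M)(\eta+E_0^{1/2}\eta^{1/2})$. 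Evaluating this at $t<\tilde R$ and at the radius $(M+1)\diam Y$ — admissible because $\dist(x,Y)<\tilde R<R$ makes $u_\varepsilon$ a classical solution on $B_R(x)$ and $B_{\tilde R}(x)\subset B_{(M+1)\diam Y}(x)$ — and using that $\mathcal S_1\cap B_{(M+1)\diam Y}(x)$ and $\mathcal S_2\cap B_{(M+1)\diam Y}(x)$ are disjoint subsets of $\mathcal S_0\cap\{\lv u_\varepsilon\rv<1\}$ while $\bigl((M+1)\diam Y/\tilde R\bigr)^{n-1}=(1+1/M)^{n-1}$, one concludes by a final application of monotonicity at a point of $Y$ to pass from radius $(M+1)\diam Y$ to radius $R$, legitimate since $(M+1)\diam Y<R$.

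The genuinely delicate point — and the only real departure from \cite{hutchinson2000convergence} — will be the admissibility of the mollified, anisotropic test fields $g$ in \eqref{eq: stationary} and the verification that no surface term on $\partial\{\lv u_\varepsilon\rv<1\}$ is lost in the limit; this is precisely where the free boundary could obstruct matters, and I expect it to be handled exactly as in Lemmas \ref{lem: modica}--\ref{lem: monotonicity}, using the Lipschitz regularity of $u_\varepsilon$ and the fact that the free--boundary contribution is already carried by the term $\chi_{(-1,1)}(u_\varepsilon)/\varepsilon$. Once this is secured, the remaining steps — telescoping the flux estimate over the gaps, estimating the Cauchy--Schwarz cross terms, selecting $t_3$, and the radius comparisons — are the combinatorial bookkeeping of \cite[Section 5]{hutchinson2000convergence}.
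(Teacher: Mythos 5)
The paper does not in fact give a proof of this lemma; it simply \emph{quotes} the statement from Hutchinson--Tonegawa \cite[Lemma 5.4]{hutchinson2000convergence} and implicitly asserts that the same argument carries over to the free-boundary setting. Your blind proposal correctly identifies that source and reconstructs the natural adaptation: testing \eqref{eq: stationary} with sliced, mollified vector fields to obtain the vertical flux identity, using hypothesis (6) plus Cauchy--Schwarz against $\varepsilon\lv\nabla u_\varepsilon\rv^2$ to absorb the cross-terms, telescoping over the at most $N$ gaps of $Y$ and averaging to select $t_3$, and finally running the monotonicity computation on slab-truncated cylinders to get the radius comparison $(1+1/M)^{n-1}$ in part (2). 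This is exactly the structure of the Hutchinson--Tonegawa proof, so in spirit your route coincides with what the paper presumably intends.

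Two remarks on where your sketch stops short of a full verification, both of which the paper itself also leaves implicit. First, you flag but do not close the passage to the limit for the mollified, anisotropic test fields $g(y)=\psi(y_n)(\rho(\lv y'\rv)y',\sigma(y))$: you need to check that the slice integrals over $\{y_n=s_j\}$ are well-defined and that no extra surface term appears along $\partial\{\lv u_\varepsilon\rv<1\}$. This does work, because $u_\varepsilon$ is harmonic (hence smooth) inside $\{\lv u_\varepsilon\rv<1\}$, the free boundary is a locally $C^1$ surface by Definition \ref{def: classical_sol}, and $u_\varepsilon$ is globally Lipschitz; for $\mathcal H^1$-a.e.\ slice level $s$, the intersection of $\{y_n=s\}$ with the free boundary is $\mathcal H^{n-1}$-null in the slice, so the traces exist, and the stationarity identity already localizes to $\{\lv u_\varepsilon\rv<1\}$ with the potential carried by $\chi_{(-1,1)}(u_\varepsilon)/\varepsilon$. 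This should be spelled out, not merely expected. Second, you do not track how the prefactor $(N+1)NM/\delta$ and the constant $c(n,N,M)$ in part (2) emerge from the gap count, the radius ratio $(M+1)\diam Y<R$, and the measure of the admissible interval for $t_3$ enforced by $\lv x_n-t_3\rv\geq\tfrac32(1-\delta)$; since the lemma states these constants explicitly, a complete proof would need to reproduce the bookkeeping of \cite[Section 5]{hutchinson2000convergence} with the normalization of the free-boundary energy, in which a single one-dimensional profile contributes density $4$ rather than the surface-tension constant of the smooth potential. Neither point is a wrong idea, but as written the argument remains a plan of attack rather than a proof, and a reader should be aware that the paper is relying on the same unstated adaptation.
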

Starting $ t_1 = - \infty $ and $ t_2 = \infty $, we inductively apply Lemma \ref{lem: vartically monotonicity formula} to divide all the element of $ Y $. Then, by choosing $ M $ sufficiently large and taking $ \eta $ sufficiently small depending on $ M $ and $ N $, we have the following lemma, which demonstrates that the whole energy density can be estimated from the sum of the energy densities in the strip region.
\begin{lemma}
    \label{lem: sheet separation 2}
    Given $ 0 < R < \infty $, $ 0 < E_0 < \infty $, $ 0 < s < 1 $ and $ N \in \N $, there exists $ \eta > 0 $ with the following property: Assume
    \begin{enumerate}
        \item $ Y $ is a subset of $ \R^n $, $ Y $ has $ N $ elements, $ T ( y ) = 0 $ for all $ y \in T $, $ 0 < a < 1 $, $ \lv y - z \rv > 3 a $ for all any distinct $ y , z \in Y $ and $ \diam Y \leq \eta R $.
        \item On $ \{ x \in \R^n \mid \dist ( x , Y ) < R \} $, $ u_{ \varepsilon } $ is a classical solution of $ J_\varepsilon $.
        \item For each $ x \in Y 
        $ and $ a \leq r \leq R $,
        \[
            \int_{ B_r ( x ) \cap \{ \lv u_{ \varepsilon } \rv < 1 \} } \lv \xi_{ \varepsilon } \rv + ( 1 - ( \nu_n )^2 ) \varepsilon \lv \nabla u_{ \varepsilon } \rv^2 \leq \eta r^{ n - 1 } \text{ and } \int_{ B_r ( x ) \cap \{ \lv u_{ \varepsilon } \rv < 1 \} } \varepsilon \lv \nabla u_{ \varepsilon } \rv^2 \leq E_0 r^{ n - 1 }.
        \]
    \end{enumerate}
    Then there exist $ - \infty = t_{ 0 , - } < t_1 < \cdots < t_{ N - 1 } < t_{ 0 , + } = \infty $ such that, for each $ y \in Y $, we can choose a sheet $ S_y = \{ x \mid t_{ j - 1 } < x_n < t_j \} $ so that $ S_y \cap Y = \{ y \} $ and 
    \begin{equation}
    \label{eq: sheet sparated monotonicity}
        \sum_{ y \in Y } \frac{ 1 }{ r^{ n - 1 } } \int_{ \mathcal{ S }_y \cap B_r ( y ) \cap \{ \lv u_{ \varepsilon } \rv < 1 \} } e_{ \varepsilon } \leq s + \frac{ 1 + s }{ R^{ n - 1 } } \int_{ \{ x \mid \dist( x , Y ) < R \} } e_{ \varepsilon }
    \end{equation}
    holds for any $ a \leq r < \eta R $.
\end{lemma}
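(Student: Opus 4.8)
The plan is to manufacture the dividing heights $t_1<\cdots<t_{N-1}$ by iterating the one-step vertical monotonicity formula (Lemma \ref{lem: vartically monotonicity formula}) at most $N-1$ times, organized as a finite induction on $N$: one invocation of Lemma \ref{lem: vartically monotonicity formula} splits $Y$ (which, since $T(y)=0$, lies on the $x_n$-axis) into two non-empty pieces $Y_1,Y_2$ separated by a height $t$ bounded away from every point of $Y$, and one then applies the inductive statement to $Y_1$ in the slab below $t$ and to $Y_2$ in the slab above $t$. Viewing this recursion as a binary tree of ``sheets'' rooted at $\{-\infty<x_n<\infty\}$, each internal split strictly decreasing the number of points, after at most $N-1$ splits every leaf carries exactly one point of $Y$; these leaves are the desired $S_y$, and the inserted heights are the $t_j$.

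The parameters must be fixed in a definite order. First I would fix $\delta:=1/2$ and choose $M=M(n,N,s)$ so large that the per-level multiplicative loss, a factor $(1+C/M)^{n-1}$ with $C=C(n)$, satisfies $(1+C/M)^{(n-1)(N-1)}\le 1+s/2$; this pins down the constants $c(n,N,M)$ and $(N+1)NM/\delta$ from Lemma \ref{lem: vartically monotonicity formula}. Next I would introduce a non-decreasing list of tolerances $\eta=\eta^{(1)}\le\cdots\le\eta^{(N-1)}$ by $\eta^{(1)}:=\eta$ and $\eta^{(k)}:=\max\{\eta,\ \tfrac{(N+1)NM}{\delta}(\eta^{(k-1)}+E_0^{1/2}(\eta^{(k-1)})^{1/2})\}$, each a continuous function of $\eta$ vanishing with $\eta$. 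Finally I would pick $\eta$ so small that $\eta^{(N-1)}<1$, that $\eta<1/(M+1)$ (so the $(M+1)\diam Y<R$-type hypotheses hold), and that $c(n,N,M)\sum_{k=1}^{N-1}(\eta^{(k)}+E_0^{1/2}(\eta^{(k)})^{1/2})\le s/2$.

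In the induction step I would apply Lemma \ref{lem: vartically monotonicity formula} to the current subset $Y'$ at depth $d$, with the current outer heights, a radius $R'$ taken to be a fixed geometric fraction of the parent scale $\tilde R=M\diam(\text{parent subset})$, tolerance $\eta^{(d)}$, and $E_0$ as given. Hypothesis (6) there is exactly assumption (3) of the present lemma (its bound $\eta r^{n-1}$ is stronger than $\eta^{(d)}r^{n-1}$); hypothesis (5) at a finite outer height was produced, with bound $\le\tfrac{(N+1)NM}{\delta}(\eta^{(d-1)}+E_0^{1/2}(\eta^{(d-1)})^{1/2})\le\eta^{(d)}$, by conclusion (1) of the split that created that height — this is precisely why the tolerances must be allowed to deteriorate generationally; hypothesis (3) is arranged by the choice of $R'$ using $\diam Y'\le\diam Y\le\eta R$. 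With all splits in hand, I would chain conclusion (2) of Lemma \ref{lem: vartically monotonicity formula} down each root-to-leaf path; the key structural point is that conclusion (2) bounds the sum of the \emph{two} children's localized energies at once, so that discarding the non-negative sibling term at each step and converting each tube $\{\dist(\cdot,Y')<R'\}$ into a single ball centered at a point of $Y'$ — at a cost of only $(1+\diam Y'/R')^{n-1}\le(1+C/M)^{n-1}$ — yields for every $y\in Y$ and $a\le r<\eta R$ a bound
\[
    \frac{1}{r^{n-1}}\int_{S_y\cap B_r(y)\cap\{\lv u_\varepsilon\rv<1\}}e_\varepsilon\ \le\ (1+C/M)^{(n-1)(N-1)}\,\frac{1}{R^{n-1}}\int_{\{\dist(x,Y)<R\}}e_\varepsilon\ +\ c(n,N,M)\sum_{k=1}^{N-1}\Big(\eta^{(k)}+E_0^{1/2}(\eta^{(k)})^{1/2}\Big),
\]
where I also use $B_r(y)\subset B_{r+\diam Y}(z)$ to recenter balls at a common $z\in Y$, and, for radii $r$ exceeding the smallest split scale, the plain monotonicity formula of Lemma \ref{lem: monotonicity} together with the smallness of the discrepancy. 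Summing over the $N$ sheets, which tile $\R^n$ up to the negligible dividing planes so that no spurious factor of $N$ multiplies the main term, and invoking the parameter choices gives \eqref{eq: sheet sparated monotonicity}.

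The hard part will not be any single inequality but the simultaneous control of three interlocking quantities as the tree grows. First, the ``vertical discrepancy'' hypothesis (5) has to be re-established at each new height, which is possible only because Lemma \ref{lem: vartically monotonicity formula} returns it with the controlled loss $\eta\mapsto\tfrac{(N+1)NM}{\delta}(\eta+E_0^{1/2}\eta^{1/2})$ — forcing the hierarchy $\eta^{(1)}\le\cdots\le\eta^{(N-1)}$ and hence a choice of $\eta$ that depends on $N$ through an iterated square root. Second, the multiplicative constant must stay $1+o_M(1)$ per level \emph{without} the number of leaves entering the main term, which is exactly why one must exploit the paired-children form of conclusion (2) rather than estimating one leaf at a time. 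Third, the scale down to which each conclusion is valid shrinks at every step, so one has to check it remains above $\eta R$ or patch the top of the range from the ordinary monotonicity formula. Once these are organized, the remainder is routine propagation of the two-piece inequality.
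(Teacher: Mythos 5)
The paper gives only a one-sentence sketch for this lemma---iterate Lemma~\ref{lem: vartically monotonicity formula} starting from $t_1=-\infty,\ t_2=+\infty$, choosing $M$ large and then $\eta$ small depending on $M$ and $N$---and your proposal elaborates exactly that sketch (binary tree of splits, hierarchy of deteriorating tolerances $\eta^{(1)}\le\cdots\le\eta^{(N-1)}$, fixing $\delta$ and $M$ before $\eta$), correctly isolating the three delicate points. However, the final assembly has a genuine gap. You note that conclusion (2) of Lemma~\ref{lem: vartically monotonicity formula} bounds the \emph{sum} of the two children's strip-localized energies, but then ``discard the non-negative sibling term'' and write the per-leaf inequality
\[
\frac{1}{r^{n-1}}\int_{S_y\cap B_r(y)}e_\varepsilon\;\le\;(1+C/M)^{(n-1)(N-1)}\frac{1}{R^{n-1}}\int_{\{\dist(\cdot,Y)<R\}}e_\varepsilon\;+\;\text{error}
\]
asserted for every $y\in Y$. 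Summing this over the $N$ leaves puts a factor of $N$ in front of the main term, and $N(1+C/M)^{(n-1)(N-1)}$ cannot be made $\le 1+s$ for any $N\ge 2$, regardless of how $M$ and $\eta$ are chosen. The appeal to the tiling of the sheets $S_y$ does not rescue this: tiling constrains the left-hand side, not the factor of $N$ you have created on the right. The correct bookkeeping is the opposite of what you wrote: do \emph{not} drop a sibling. Retaining both children at every internal node, the sum over all $N$ leaves telescopes to a \emph{single} copy of the root tube integral $R^{1-n}\int_{\{\dist(\cdot,Y)<R\}}e_\varepsilon$, at multiplicative cost at most $(1+C/M)^{(n-1)(N-1)}$ and with one additive error term per internal node (there are $N-1$ of them, each carrying a tolerance $\eta^{(k)}$). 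Only with this paired accounting do the choices of $M$ large and then $\eta$ small produce \eqref{eq: sheet sparated monotonicity}.
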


We next show that the smallness of the discrepancy and tilt excess imply that the solution is close to a one-dimensional solution in an $\varepsilon$-scale ball.

\begin{prop}
\label{prop: flatness}
	Given $ 0 < s < 1 $, there exist $ 0 < \eta < 1/4 $, $ 0 < b < 1 $ and $ 1 < L < \infty $ with the following property: Assume $ u_{\varepsilon}(0)=0 $, $ 0 < \varepsilon < 1 $ and $ u_{ \varepsilon } $ is a classical solution of $ J_\varepsilon $, and
	\begin{equation}
	\label{eq: C1 L1 flatness}
		\int_{ B_{ 2 L \varepsilon } ( 0 )\cap\{|u|<1\}} \big( \lv \xi_{ \varepsilon } \rv + ( 1 - ( \nu_n )^2 ) \varepsilon \lv \nabla u_{ \varepsilon } \rv^2 \big) \leq \eta ( 2 L \varepsilon )^{ n - 1 }.
	\end{equation}
	Moreover, we assume that $\{|u_\varepsilon|<1\}\cap B_{2L\varepsilon}(0)$ is connected and
    \begin{equation}
    \label{eq: C2 uniform bound assumption}
        \lV D^2 u_{\varepsilon} \rV_{ L^{\infty} (B_{2L\varepsilon}(0))} \leq \frac{ C }{ \varepsilon^2 }
    \end{equation}
    for some constant $ C > 0 $ independent of $\varepsilon$. We then have
	\begin{equation}
	\label{eq: energy density 1}
		\bigg\lv \frac{ 1 }{ \omega_{ n - 1 } ( L \varepsilon )^{ n - 1 } } \int_{ B_{ L \varepsilon } ( 0 ) \cap \{|u_\varepsilon|<1\} \cap\{|x_n|\leq(1-b)\varepsilon\}} e_{ \varepsilon } - 4 \bigg\rv \leq s.
	\end{equation}
\end{prop}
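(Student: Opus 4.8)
The plan is to rescale to unit scale and then argue by compactness and contradiction. First I would put $v(y):=u_\varepsilon(\varepsilon y)$ for $y\in B_{2L}(0)$, a classical solution of the $\varepsilon=1$ free boundary problem with $v(0)=0$; dividing \eqref{eq: C1 L1 flatness} by $\varepsilon^{n-1}$ and using the pointwise identity $(1-(\nu_n)^2)|\nabla v|^2=|\nabla' v|^2$ with $\nabla'=(\partial_1,\dots,\partial_{n-1})$, the hypotheses become
\[
	\int_{B_{2L}(0)\cap\{|v|<1\}}\big(|\xi|+|\nabla' v|^2\big)\leq\eta(2L)^{n-1},\qquad \xi:=|\nabla v|^2-\chi_{(-1,1)}(v),
\]
together with $\|D^2v\|_{L^\infty(\{|v|<1\}\cap B_{2L}(0))}\leq C$ and connectedness of $\{|v|<1\}\cap B_{2L}(0)$, while the quantity to estimate becomes $\tfrac{1}{\omega_{n-1}L^{n-1}}\int_{B_L(0)\cap\{|v|<1\}\cap\{|y_n|\leq 1-b\}}e$ with $e:=|\nabla v|^2+\chi_{(-1,1)}(v)$. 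Since the one-dimensional profile $t\mapsto t$ has $e\equiv 2$ on $\{|t|<1\}$, the natural target value is $A(L,b):=\tfrac{2}{\omega_{n-1}L^{n-1}}\mathcal{L}^n\big(B_L(0)\cap\{|y_n|\leq 1-b\}\big)$, which tends to $4(1-b)$ as $L\to\infty$; so I would first fix $b=b(n,s)\in(0,1)$ small and then $L=L(n,s)>1$ large so that $|A(L,b)-4|<s/2$, and produce $\eta$ by contradiction for these $b,L$.

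Suppose then there is a sequence of classical solutions $v_j$ on $B_{2L}(0)$ satisfying the rescaled hypotheses with $\eta_j\to 0$ but violating the conclusion. By Modica's inequality (Lemma \ref{lem: modica}) each $v_j$ is $1$-Lipschitz into $[-1,1]$, so along a subsequence $v_j\to v_\infty$ locally uniformly, with $v_\infty$ $1$-Lipschitz and $v_\infty(0)=0$; set $\Omega_\infty:=\{|v_\infty|<1\}$, an open set containing $B_1(0)$. For a compact $K\subset\Omega_\infty$ we have $|v_j|<1$ on $K$ for large $j$, so $v_j$ is harmonic near $K$; interior estimates upgrade the convergence to $C^\infty_{loc}(\Omega_\infty)$, so $v_\infty$ is harmonic on $\Omega_\infty$ and, since $\xi_j=|\nabla v_j|^2-1$ on $K$ for large $j$,
\[
	\int_K|\nabla' v_\infty|^2=\lim_j\int_K|\nabla' v_j|^2\leq\liminf_j\eta_j(2L)^{n-1}=0,\qquad \int_K\big||\nabla v_\infty|^2-1\big|=\lim_j\int_K|\xi_j|=0.
\]
Letting $K$ exhaust $\Omega_\infty$ gives $\nabla' v_\infty\equiv 0$ and $|\nabla v_\infty|\equiv 1$ on $\Omega_\infty$.

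Next I would pin down the limiting layer. On the component $U_0$ of $\Omega_\infty$ containing $0$, the continuous $\{\pm1\}$-valued function $\partial_n v_\infty$ is constant, so $\nabla v_\infty\equiv\pm e_n$ and hence $v_\infty=\pm y_n$ on $U_0$; in particular $U_0\subset\{|y_n|<1\}$, and every $p\in\partial U_0\cap B_{2L}(0)$ satisfies $|p_n|=|v_\infty(p)|=1$. Thus $U_0$ is open, nonempty, and relatively closed in the connected set $\{|y_n|<1\}\cap B_{2L}(0)$, so it equals it, and $v_\infty=\pm y_n$ on all of $\{|y_n|<1\}\cap B_{2L}(0)$. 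Consequently $\overline{B_L(0)}\cap\{|y_n|\leq 1-b\}$ is a compact subset of $\Omega_\infty$ on which $|v_\infty|\leq 1-b$, so for large $j$ the slab $B_L(0)\cap\{|y_n|\leq 1-b\}$ lies inside $\{|v_j|<1\}$, where $e_j=|\nabla v_j|^2+1=\xi_j+2$. Hence
\[
	\frac{1}{\omega_{n-1}L^{n-1}}\int_{B_L(0)\cap\{|v_j|<1\}\cap\{|y_n|\leq 1-b\}}e_j=A(L,b)+\frac{1}{\omega_{n-1}L^{n-1}}\int_{B_L(0)\cap\{|y_n|\leq 1-b\}}\xi_j,
\]
and the error term is $O(\eta_j)\to 0$. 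For $j$ large the left side is within $s$ of $4$, contradicting the choice of the $v_j$; this proves the proposition.

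The step I expect to be most delicate is showing that the limiting transition layer is a single flat sheet filling the slab — that is, $v_\infty=\pm y_n$ on all of $\{|y_n|<1\}\cap B_{2L}(0)$, so that the slab $\{|y_n|\leq 1-b\}\cap B_L(0)$ sits \emph{strictly inside} $\{|v_j|<1\}$ for large $j$ and $e_j$ remains bounded there. This is exactly where connectedness of $\{|v_j|<1\}\cap B_{2L}(0)$ and the bound \eqref{eq: C2 uniform bound assumption} are meant to help: connectedness prevents the limit from splitting into several parallel sheets inside the slab, while the $C^2$ bound excludes the ``concentration of energy at the free boundary'' flagged in the remark after Theorem \ref{thm: main-euclidean}, which would otherwise let $\mathcal{L}^n(B_L(0)\cap\{|v_j|<1\}\cap\{|y_n|\leq 1-b\})$ fall short of the full slab volume. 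The remaining ingredients — Arzela--Ascoli via Modica, the harmonic upgrade of the convergence, and the one-dimensional energy bookkeeping — are routine.
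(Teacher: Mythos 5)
Your argument is correct, but it takes a genuinely different route from the paper. The paper's proof is \emph{quantitative}: it writes down the explicit truncated one-dimensional profile $q(x)=\max\{\min\{x_n,1\},-1\}$, fixes $L$ so that $q$'s slab energy is within $s/2$ of $4$, and then uses the $C^2$ bound \eqref{eq: C2 uniform bound assumption} to convert the $L^1$ smallness \eqref{eq: C1 L1 flatness} of $f=1-(\partial_n u)^2=|\xi|+|\nabla' u|^2$ into a pointwise bound $\|f\|_{L^\infty}\lesssim\eta^{1/(n+1)}$ (pick a near-maximum point $x_0$ of $f$, use $\|f\|_{C^1}\lesssim C$ to propagate $f(x_0)$ over a ball $B_r(x_0)$, integrate, and optimize $r$), from which $C^1$ closeness to $q$ and the choice of $b$ follow. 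Your proof replaces this with a soft compactness/contradiction argument: Modica gives $1$-Lipschitz, Arzel\`a--Ascoli gives a subsequential uniform limit $v_\infty$, interior elliptic estimates (available because any compact $K\subset\subset\{|v_\infty|<1\}$ lies in $\{|v_j|<1\}$ for large $j$) upgrade to $C^\infty_{\mathrm{loc}}$, and the vanishing of $\int|\xi_j|$ and $\int|\nabla' v_j|^2$ forces $v_\infty=\pm y_n$ on the component through the origin, which one then shows fills $\{|y_n|<1\}\cap B_{2L}(0)$ by a connectedness-of-the-slab argument; the energy identity on the slab then gives the contradiction. Both are valid.

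One observation worth making explicit: your compactness proof does not actually invoke the $C^2$ bound \eqref{eq: C2 uniform bound assumption}, nor the connectedness of $\{|u_\varepsilon|<1\}\cap B_{2L\varepsilon}(0)$, contrary to what your closing discussion suggests --- the elliptic upgrade happens strictly in the interior of $\{|v_j|<1\}$, and the slab $\overline{B_L}\cap\{|y_n|\le 1-b\}$ sits compactly inside $\{|v_\infty|<1\}$ before you ever need to look at the free boundary. So, read on its own, your argument establishes Proposition~\ref{prop: flatness} under weaker hypotheses than the paper states. The price paid is the usual one for compactness: you lose the explicit rate $\eta^{1/(n+1)}$ for the $C^1$ flatness that the paper's quantitative argument produces and that the paper re-uses verbally in the proof of Theorem~\ref{thm: integrality} (``as in the proof of Proposition~\ref{prop: flatness}, $u_i$ is $C^1$ flat around $y\in Y\subset G_i$''). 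Your argument supplies that $C^1$ flatness too, just without an explicit modulus, which is all that is needed since the statement being proved is non-quantitative. A small stylistic point: you should state explicitly that $\{|y_n|<1\}\cap B_{2L}(0)$ is connected (it is convex) when you invoke the open--closed argument, and that $v_\infty(0)=0$ pins the additive constant.
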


\begin{proof}
    We rescale the domain by $\varepsilon$ for convenience. Thus we may assume $\varepsilon=1$, and drop $\varepsilon$ from the notation in the following. 

    We first show the claim without $b$, i.e. 
    \begin{equation}\label{eq: energy density without b}
        	\bigg\lv \frac{ 1 }{ \omega_{ n - 1 } ( L \varepsilon )^{ n - 1 } } \int_{ B_{ L \varepsilon } ( 0 ) \cap \{|u_\varepsilon|<1\}  } e_{ \varepsilon } - 4 \bigg\rv \leq s.
    \end{equation}
    Consider the truncated one-dimensional profile
    \[
        q(x)=q(x',x_n)=\max\{\min\{x_n+u(0),1\},-1\}.
    \]
    Then take $L>0$ large enough, so that
    \begin{equation}\label{eq: energy estimate for 1 dimensional profile}
            \begin{split}
            &\left|\frac{1}{\omega_{n-1}L^{n-1}}\int_{B_{L}(0)\cap\{|q|<1\}}\left(|\nabla q|^2+\chi_{(-1,1)}(q)\right)-4\right|
            = 2\left(\frac{|B_L(0)\cap\{|q|<1\}|}{\omega_{n-1}L^{n-1}}-2\right)
            \leq \frac{s}{2}.
        \end{split}
    \end{equation}
    Let $ f = 1 - (\partial_n u )^2 $. Take $x_0\in B_L(0)\cap\{|u|<1\}$, such that
    \[
        \|f\|_{L^\infty(B_{L}\cap\{|u|<1\})}\leq 2f(x_0).
    \]
    For any $x\in B_r(x_0)\cap\{|u|<1\}$ (we choose $r$ later), by \eqref{eq: C2 uniform bound assumption}, we have
    \[
        |f(x_0)|\leq |f(x)|+r\|f\|_{C^1(B_{3L}\cap\{|u|<1\})}\leq |f(x)|+Cr.
    \]
    Therefore, integrate above in $B_r(x_0)\cap\{|u|<1\}$ and using
    \[
        f=1-(\partial_nu)^2=|\xi|+(1-\nu_n^2)|\nabla u|^2,
   \]
    we obtain
    \[
        |f(x_0)|\leq C(\eta r^{-n}+r).
    \]
    By taking $r=n^{\frac{1}{n+1}}\eta^{\frac{1}{n+1}}$, we conclude $|f(x_0)|\leq C\eta^{\frac{1}{n+1}}$. This implies that
    \[
       \|1-(\partial_nu)^2\|_{L^\infty(B_{L}(0)\cap\{|u|<1\})}=\|f\|_{L^\infty(B_{L}(0)\cap\{|u|<1\})}\leq C\eta^{\frac{1}{n+1}}.
    \]
    From this, we conclude that $u$ is $C^1$ close to $q(x_n)$ on $B_{L}(0)\cap\{|u|<1\}$. Take $\eta$ sufficiently small, the proof of \eqref{eq: energy density without b} follows from \eqref{eq: energy estimate for 1 dimensional profile}.

    Finally, we show that we can take small $b$ (depends on $n$ and $s$) so that \eqref{eq: energy density 1} follows. From the $C^1$ flatness of $u$, we can estimate $\|h_t-x_n\|_{L^\infty(B_L(0)\cap\{|u|<1\})}$ in algebraic order of $\eta$, where $h_t$ denotes the function whose graph is the $t$-level surface of $u$. Therefore, by taking $b$ small (independent of $\eta<1/4$), we have
    \[
        B_L(0)\cap\{|x_n|>1-b\}\subset B_L(0)\cap \{|u|>1-b\}.
    \]
    Therefore, by taking $b$ small compared to $s$, we can estimate the right hand side of
    \[
        \int_{B_L(0)\cap\{|x_n|>1-b\}}e_\varepsilon\leq\int_{B_L(0)\cap\{|u|>1-b\}}e_\varepsilon
    \]
    as small as we want. Then we complete the proof of \eqref{eq: energy density 1} from \eqref{eq: energy density without b}.
    \end{proof}

\begin{theorem}
\label{thm: integrality}
	Under the uniform $C^2$ bound assumption $ \lV D^2 u \rV_{ L^{\infty } } \leq C / \varepsilon^2 $, the varifold $ V $ as in Proposition \ref{prop: rec} is integral.
\end{theorem}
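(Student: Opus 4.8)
The plan is to combine the rectifiability and stationarity of $V$ obtained in Proposition \ref{prop: rec} with a local analysis at $\mathcal{H}^{n-1}$-almost every point of the underlying rectifiable set, coupling the sheet-separation machinery of Lemmas \ref{lem: vartically monotonicity formula}--\ref{lem: sheet separation 2} with the energy quantization of Proposition \ref{prop: flatness}; the uniform $C^2$ bound \eqref{eq: C2 uniform bound assumption} will be used only through the latter. Concretely, I would first write $\mu=\lV V\rV=\theta\,\mathcal{H}^{n-1}\lfloor M$ with $M$ countably $(n-1)$-rectifiable and reduce the statement to showing $\theta(x_0)\in4\N$ for $\mathcal{H}^{n-1}$-a.e.\ $x_0\in M$, which gives integrality since $4\N\subset\N$. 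I would work at a point $x_0$ at which the approximate tangent plane $T_{x_0}M$ exists, $\Theta^{n-1}(\mu,x_0)=\theta(x_0)\in(0,\infty)$, and the tilt-excess of $V$ vanishes; after a translation and rotation, assume $x_0=0$, $T_{x_0}M=\R^{n-1}\times\{0\}$, write $\theta_0:=\theta(0)$, so that $r^{1-n}\int_{\overline{B_r(0)}}(e_n\cdot S e_n)\,dV(x,S)\to0$ as $r\to0$ (recall that for the approximating varifolds $e_n\cdot S e_n=1-\nu_n^2$). All of these conditions hold $\mathcal{H}^{n-1}$-a.e. Since $4\N$ is closed and discrete, it suffices to produce, for each $\sigma>0$, an integer $N$ with $\lv\theta_0-4N\rv<\sigma$.

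\emph{Local quantization.} Given $\sigma>0$, I would fix $s=s(n,\sigma)$ small, then take $\eta=\eta(n,E_0,s,N_{\max})$, $b$ and $L$ as supplied by Lemma \ref{lem: sheet separation 2} and Proposition \ref{prop: flatness}, where $N_{\max}=N_{\max}(\theta_0,n)$ is an a priori bound on the number of connected components of $\{\lv u_\varepsilon\rv<1\}$ meeting a fixed fraction of a ball — available from the upper density bound and the clean ball property (Lemma \ref{lem: nondegeneracy}), since by near-flatness each such component, a thin slab carrying energy density $\asymp1/\varepsilon$, contributes a definite amount of energy. I would then choose $r>0$ small, and $i$ large (so $\varepsilon_i\ll r$), so that $\mu_i(B_r(0))\le(\theta_0+\sigma)\,\omega_{n-1}r^{n-1}$; so that the scale-uniform smallness
\[
\int_{B_\tau(x)\cap\{\lv u_{\varepsilon_i}\rv<1\}}\big(\lv\xi_{\varepsilon_i}\rv+(1-\nu_n^2)\,\varepsilon_i\lv\nabla u_{\varepsilon_i}\rv^2\big)\le\eta\,\tau^{n-1}
\]
holds for every $x$ in the transition set near $0$ and every $\varepsilon_i\le\tau\le r$; and so that, by the interior $C^1$-flatness argument run inside the proof of Proposition \ref{prop: flatness}, each connected component of $\{\lv u_{\varepsilon_i}\rv<1\}\cap B_{r/2}(0)$ is a slab of thickness $(2+o(1))\varepsilon_i$ between two $C^1$ graphs over $\R^{n-1}\times\{0\}$ of small gradient. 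Let $N$ be the number of such components meeting $B_{r/4}(0)$; then $N\le N_{\max}$ and, along a subsequence, $N$ is constant. For a.e.\ slicing level $z\in T(B_{r/8}(0))$ the line $\{T=z\}$ meets all $N$ slabs, so picking one point per slab gives $Y=Y(z)$ with $\lv Y\rv=N$, $T(Y)=\{z\}$ and $\lv u_{\varepsilon_i}\rv<1$ on $Y$, and Lemma \ref{lem: sheet separation 2} applies (with $R\asymp r$, $a\asymp\varepsilon_i$), producing disjoint horizontal strips $\mathcal{S}_y$, one per sheet and containing only that sheet, with $\sum_{y\in Y}\rho^{1-n}\int_{\mathcal{S}_y\cap B_\rho(y)\cap\{\lv u_{\varepsilon_i}\rv<1\}}e_{\varepsilon_i}\le s+(1+s)R^{1-n}\int_{\{\dist(\cdot,Y)<R\}}e_{\varepsilon_i}$ for $\varepsilon_i\le\rho<\eta R$. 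On each strip I would cover the base $T(B_{r/8}(0))$ by $(n-1)$-balls of radius $L\varepsilon_i$, lift each to a ball $B_{L\varepsilon_i}$ centred at a point where $u_{\varepsilon_i}$ vanishes on that sheet (such a point exists by the intermediate value theorem across the slab), and apply Proposition \ref{prop: flatness} there — the one place \eqref{eq: C2 uniform bound assumption} enters — obtaining $(4\pm s)\,\omega_{n-1}(L\varepsilon_i)^{n-1}$ for the sheet's central-slab energy over that ball. Summing over the cover and over the $N$ sheets, using that $\{\lv u_{\varepsilon_i}\rv<1\}\cap B_{r/8}(0)$ is the disjoint union of these sheets and that the energy outside the central slabs is $O(s)$ by the tilt and discrepancy bounds (recall $e_{\varepsilon_i}=2\varepsilon_i\lv\nabla u_{\varepsilon_i}\rv^2+\lv\xi_{\varepsilon_i}\rv$ on $\{\lv u_{\varepsilon_i}\rv<1\}$ by \eqref{eq: modica}), and ruling out sheets that fail to span $B_{r/8}(0)$ via the Hausdorff closeness of $\{\lv u_{\varepsilon_i}\rv<1\}$ to $\spt\mu\approx\R^{n-1}\times\{0\}$ (a consequence of Proposition \ref{prop: density} and the clean ball property), I expect $\mu_i(B_{r/8}(0))=4N\,\omega_{n-1}(r/8)^{n-1}\,(1+O(s))$. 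Letting $i\to\infty$ (for a.e.\ $r$, so $\mu(\partial B_{r/8}(0))=0$), then $r\to0$ and invoking $\Theta^{n-1}(\mu,0)=\theta_0$, yields $\lv\theta_0-4N\rv\le C(n)\,s<\sigma$, completing the reduction and hence the theorem.

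\emph{Main obstacle.} The genuinely hard step is establishing the scale-uniform smallness in the display above — hypothesis (3) of Lemma \ref{lem: sheet separation 2} and (5)--(6) of Lemma \ref{lem: vartically monotonicity formula} — for \emph{all} radii down to $\tau\asymp\varepsilon_i$, not merely at one fixed scale. For the discrepancy the naive bound $\int_{B_\tau(x)}\lv\xi_{\varepsilon_i}\rv\le\int_{B_r(x_0)}\lv\xi_{\varepsilon_i}\rv$ is useless once $\tau^{n-1}\ll1$, because Proposition \ref{prop: dicrepancy vanishes} only gives $L^1_{\mathrm{loc}}$ convergence; the remedy is that Modica's inequality \eqref{eq: modica} forces $\xi_{\varepsilon_i}\le0$ on $\{\lv u_{\varepsilon_i}\rv<1\}$, so the monotonicity identity \eqref{eq: monotonicity} reads $\frac{d}{dr}\big(r^{1-n}\int_{B_r(x)}e_{\varepsilon_i}\big)\ge r^{-n}\int_{B_r(x)}\lv\xi_{\varepsilon_i}\rv$, and integrating this against the upper density bound of Proposition \ref{prop: density} controls the dyadic average of $\tau^{1-n}\int_{B_\tau(x)}\lv\xi_{\varepsilon_i}\rv$, which a covering argument then upgrades to the pointwise-in-scale estimate at the transition-set points. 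The tilt term $(1-\nu_n^2)\varepsilon_i\lv\nabla u_{\varepsilon_i}\rv^2$ is handled in the same spirit, starting from the vanishing tilt-excess of the rectifiable $V$ and the first-variation control from Proposition \ref{prop: rec}, propagated down scales through \eqref{eq: monotonicity}. Apart from this, the argument is a bookkeeping adaptation of Hutchinson--Tonegawa; the one new analytic ingredient is Proposition \ref{prop: flatness} and its essential reliance on the uniform $C^2$ bound.
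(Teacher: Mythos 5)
Your overall strategy coincides with the paper's: blow up at an $\mathcal{H}^{n-1}$-a.e.\ point with tangent plane, get vanishing discrepancy and tilt-excess in the blow-up scale, build a ``good set'' of scale-uniform smallness, apply the sheet-separation Lemma~\ref{lem: sheet separation 2} to vertical fibers, quantize each sheet's energy to $\approx 4$ via Proposition~\ref{prop: flatness} (this is exactly where \eqref{eq: C2 uniform bound assumption} enters in the paper, too), and conclude $\theta\in 4\N$. The paper reaches the conclusion slightly differently from your direct two-sided quantization $\theta\approx 4N$: it fixes $N$ as the smallest integer with $\theta<4N$, shows by contradiction (using the per-sheet lower bound $\ge 4-s$ from Proposition~\ref{prop: flatness} and Lemma~\ref{lem: sheet separation 2}) that a vertical fiber in the good set meets at most $N-1$ sheets, and then uses the coarea/area formula to close the gap, $\theta\le 4(N-1)$, forcing equality. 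Both routes work; the paper's one-sided counting reduces the two-sided quantization to a counting bound on fibers and avoids having to argue that a sheet ``spans'' $B_{r/8}$, which in your version is not actually guaranteed and would need separate care.

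The place where your sketch as written would not go through is the claim that the scale-uniform smallness
\[
\int_{B_\tau(x)\cap\{\lv u_{\varepsilon_i}\rv<1\}}\big(\lv\xi_{\varepsilon_i}\rv+(1-\nu_n^2)\,\varepsilon_i\lv\nabla u_{\varepsilon_i}\rv^2\big)\le\eta\,\tau^{n-1}
\]
``holds for every $x$ in the transition set near $0$''. That is false for any fixed $\eta$: the estimate can only be obtained on a large subset. The paper defines precisely such a good set $G_i$ and, via Besicovitch covering applied to the already-established $L^1$-vanishing (\eqref{eq: discrepancy limit}, \eqref{eq: flatness limit}, which follow from the blow-up and Proposition~\ref{prop: dicrepancy vanishes}), controls \emph{both} $\lV V_i\rV(B_2\setminus G_i)$ \emph{and} $\mathcal{L}^{n-1}(T(B_2\setminus G_i))$. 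The projection bound is indispensable for the coarea counting step and your proposal does not supply it; without it, bad fibers could cover a macroscopic portion of the base and the per-sheet quantization would not aggregate to the density. Relatedly, your ``Main obstacle'' remedy conflates the derivation of $L^1$-vanishing of the discrepancy (which is Proposition~\ref{prop: dicrepancy vanishes}, where Modica and the monotonicity formula are indeed used) with the construction of $G_i$: once the $L^1$-smallness is in hand, the scale-uniform estimate on a large set with small projected complement is a straight Besicovitch argument paired with the upper density bound of Proposition~\ref{prop: density}, and no further use of the monotonicity formula or of $\xi_\varepsilon\le 0$ is needed at that stage. With those two points fixed, your argument becomes essentially the paper's.
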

\begin{proof}
	Since $ V $ is rectifiable, $ V $ has an approximate tangent plane for $ \mathcal{ H }^{ n - 1 } $-almost everywhere on $ \spt \lV V \rV $. We may assume that there is the approximate tangent plane at the origin and choose coordinates so that the tangent plane is $ T = \{ x \in \R^n \mid x_n = 0 \} $. We set $ \Phi_r ( x ) = x / r $ and take a sequence $ \{ r_i \}_{ i \in \N } $ so that $ r_i \to 0 $ and $ ( \Phi_{ r_i } )_{ \# } V \to \theta \, \lv T \rv $ in the varifold sense, where $ \theta $ denotes the density of $ V $ at the origin and $ ( \Phi_r )_{ \# } $ is the usual push-forward. By taking a subsequence if necessary, we may assume $ \lim_{ i \to \infty } ( \Phi_{ r_i } )_{ \# } V_i = \theta \, \lv T \rv $ and $ \varepsilon_i / r_i \to 0 $. Let $ u_i ( x ) = u_{ \varepsilon_i } ( r_i x ) $. We observe that $ u_i $ is the solution of the following
	\[
		\begin{cases}
			\Delta u_i = 0 &\text{in } \{ \lv \tilde{ u }_i \rv < 1 \}\\
			\lv \nabla u_i \rv = 1 / \tilde{ \varepsilon }_i &\text{on } \partial \{ \lv \tilde{ u }_i \rv < 1 \}
		\end{cases}
	\]
	with $ \tilde{ \varepsilon }_i = \varepsilon_i / r_i \to 0 $. In the following, we abuse the notation and use $\varepsilon_i$ in place of $\tilde{\varepsilon}_i$, and write $ V_i $ and $\xi_i$, for the varifold and the discrepancy associated to $ u_i $, respectively.
	
	By Proposition \ref{prop: dicrepancy vanishes}, we can see that
	\begin{equation}
	\label{eq: discrepancy limit}
		\lim_{ i \to \infty } \int_{ B_3 ( 0 ) \cap \{ \lv u_i \rv < 1 \} } |\xi_i| = 0,
	\end{equation}
	and the Radon measure $ 2 \lv \nabla u_i \rv \, d x $ converges to the same limit $ \theta \, \lv T \rv $. Since $ V_i \to \theta \, \lv T \rv $ in the varifold sense, we have
	\begin{equation}
	\label{eq: flatness limit}
		\lim_{ i \to \infty } \int_{ B_3 ( 0 ) \cap \{ \lv u_i \rv < 1 \} } \big( 1 - ( \nu_n )^2 \big) \varepsilon_i \lv \nabla u_i \rv^2 = 0.
	\end{equation}

	Let $ N $ be the smallest positive integer greater than $ \theta / 4 $ and let $ s > 0 $ be arbitrary small. Corresponding to $ s $, we choose $ \eta $ and $ L $ by Proposition \ref{prop: flatness}. We also restrict $ \eta $ so that $ 1 / ( 1 - \eta )^{ n - 1 } \leq 1 + s $. For all large $ i $, we define 
	\begin{equation*}
		\begin{split}
			G_i &= B_2 ( 0 ) \cap \{ \lv u_i \rv < 1 \} \cap\\
			&\bigg\{ x \mid \int_{ B_r ( x ) \cap \{ \lv u_i \rv < 1 \} } \lv \xi_{ \varepsilon_i } \rv + \big( 1 - ( \nu_n )^2 \big) \varepsilon_i \lv \nabla u_i \rv^2 \leq \eta r^{ n - 1 }, \text{ for all }r\in[2L\varepsilon_i,1] \bigg\}.
		\end{split}
	\end{equation*}
    Combined with Proposition \ref{prop: density} and the Besicovitch covering theorem, one shows
	\begin{equation*}
		\begin{split}
			&\lV V_i \rV ( B_2 ( 0 ) \cap \{ \lv u_i \rv < 1 \} \setminus G_i ) + \mathcal{L}^{n-1}( T ( B_2 ( 0 ) \cap \{ \lv u_i \rv < 1 \} \setminus G_i))\\
            &\leq c \eta^{ - 1 } \int_{ B_3 ( 0 ) } \lv \xi_{ \varepsilon_i } \rv + \big( 1 - ( \nu_n )^2 \big) \varepsilon_i \lv \nabla u_i \rv^2
		\end{split}
	\end{equation*}
	for some $ c = c ( n ) > 0 $. Thus, by \eqref{eq: discrepancy limit} and \eqref{eq: flatness limit}, we obtain
	\begin{equation}
	\label{eq: Gi convergence}
		\lim_{ i \to \infty } \lV V_i \rV ( B_2 ( 0 ) \cap \{ \lv u_i \rv < 1 \} \setminus G_i ) = 0 \text{ and } \lim_{i\to\infty}\dist(T,G_i)=0.
	\end{equation}
    
    We let $ Y = B_1 ( 0 ) \cap T^{ - 1 } ( x ) \cap G_i \cap \{ u_i = t \} $ for $ x \in B_1^{ n - 1 } ( 0 ) := ( \R^{ n - 1 } \times \{ 0 \} ) \cap B_1 ( 0 ) $ and $ t \in ( - 1 , 1 ) $. By Proposition \ref{prop: flatness}, each element $ y $ of $ Y $ is contained a distinct connected component $ U_y $ of $ \{ \lv u_i \rv < 1 \} \cap \{ ( x' , x_n ) \in B_1^{ n - 1 } ( 0 ) \times \R \mid \lv x' \rv \leq L \varepsilon_i \} \cap B_1 ( 0 ) $. Suppose $|Y|\geq N$. We construct $\tilde{Y}$ from $Y$ in the following manner: we pick arbitrary $N$ points in $Y$, then move these points to the closest nodal set $\{u_i=0\}$ in $x_n$ direction. Precisely, let $Y^*$ be a family of $N$ points, arbitrary chosen from $Y$. Then (where $\mathbf{e}_n:=(0,\dots,0,1)$ is the unit vector in $n$ direction)
    \[
        \tilde{Y}:=\left\{y-\mathbf{e}_nd_\textup{N}(y):y\in Y^*\right\}\subset\{u_i=0\},
    \]
    where $d_\textup{N}(y)$ is the signed distance in $x_n$ direction, from the nodal set $\{u_i=0\}$. Then, as in the proof Proposition \ref{prop: flatness}, we know that $u_i$ is $C^1$ flat around $y\in Y\subset G_i$, thus the following holds:
    \begin{enumerate}
        \item $ | \tilde{ Y } | = N $,
        \item $ \tilde{ Y } \subset B_1 ( 0 ) \cap T^{ - 1 } ( x ) \cap G_i $,
        \item $ | \tilde{y} - \tilde{z} | \geq 2 \varepsilon $ for $ \tilde{y} , \tilde{z} \in \tilde{Y} $ with $ \tilde{y} = \tilde{z} $,
        \item for all $ \tilde{y} \in \tilde{Y} $, there exists $ y \in Y $ such that $ \tilde{y} \in U_y $,
        \item \eqref{eq: energy density 1} holds for each element of $ \tilde{Y} $.
    \end{enumerate}
    Applying Lemma \ref{lem: sheet separation 2} with $ \delta = b / 2 $ to this $ \tilde{Y} $, there exist $ - \infty = t_{ 0 , - } < t_1 < \cdots < t_{ N - 1 } < t_{ 0 , + } = \infty $ such that $ t_j - t_{ j - 1 } \geq ( 1 - b / 2 )( 2 - b ) \varepsilon $ and \eqref{eq: sheet sparated monotonicity} holds. By Proposition \ref{prop: flatness},
    \[
        4 - s \leq \frac{ 1 }{ \omega_{ n - 1 } ( L \varepsilon_i )^{ n - 1 } } \int_{ U_y \cap B_{ L \varepsilon_i } ( \tilde{ y } ) \cap \{ \lv x_n - \tilde{ y }_n \rv \leq 2 ( 1 - b ) \varepsilon \} } e_{ \varepsilon_i }
    \]
    for each $ \tilde{ y } \in \tilde{ Y } $, where $ U_y $ is a connected component corresponding to $ \tilde{ y } $. Since $ V_i \to \theta \, \lv T \rv $ in $ B_3 ( 0 ) $,
	\[
		\sup_{ x \in B_1^{ n - 1 } ( 0 ) } \frac{ 1 }{ \omega_{ n - 1 } } \int_{ B_1 ( x ) } e_{ \varepsilon_i } \leq \theta + s
	\]
	holds for sufficiently large $ i $. Due to \eqref{eq: Gi convergence} and using $ V_i \to \theta \, \lv T \rv $ again, we also have $ \diam Y \leq \eta $ for large $ i $. By Lemma \ref{lem: sheet separation 2} and the above two inequality, we would have
	\[
		4 N \leq N s + ( 1 + s ) ( \theta + s ).
	\]
	This would be a contradiction to $ \theta < 4 N $ for sufficiently small $ s $ depending only on $ N $.
	
	From \eqref{eq: Gi convergence} and the fact that $ 2 \lv \nabla u_i \rv \, d x $ converges to $ \lV V \rV $, it follows that
	\begin{equation}
	\label{eq: 1}
		\lim_{ i \to \infty } \int_{ B_1 ( 0 ) \cap \{ \lv u_i \rv < 1 \} \cap G_i } e_{ \varepsilon_i } = \lim_{ i \to \infty } \int_{ B_1 ( 0 ) \cap \{ \lv u_i \rv < 1 \} \cap G_i } 2 \lv \nabla u_i \rv = \lim_{ i \to \infty } \int_{ B_1 ( 0 ) \cap \{ \lv u_i \rv < 1 \} } 2 \lv \nabla u_i \rv.
	\end{equation}
	By \eqref{eq: flatness limit}, \eqref{eq: Gi convergence} and Proposition \ref{prop: dicrepancy vanishes}, we have
	\begin{equation}
		\label{eq: 2}
		\lim_{ i \to \infty } \int_{ B_1 ( 0 ) \cap \{ \lv u_i \rv < 1 \} \cap G_i } 2 \lv \nabla u_i \rv = \lim_{ i \to \infty } \int_{ B_1 ( 0 ) \cap \{ \lv u_i \rv < 1 \} \cap G_i } 2 \lv \nu_n \rv \lv \nabla u_i \rv.
	\end{equation}
	Using the coarea formula and the area formula (see \cite[10.6 and 12.4]{simon1983lectures} for example), we see that
	\begin{equation}
	\label{eq: 3}
		\begin{split}
			&\int_{ B_1 ( 0 ) \cap \{ \lv u_i \rv \leq 1 \} \cap G_i } 2 \lv \nu_n \rv \lv \nabla u_i \rv
			= \int_{ - 1 }^{ 1 } \int_{ B_1 ( 0 ) \cap \{ u_i = t \} \cap G_i } 2 \lv \nu_n \rv \, d \mathcal{ H }^{ n - 1 } d t\\
			&= \int_{ - 1 }^{ 1 } \int_{ \{ x_n = 0 \} } 2 \mathcal{ H }^0 ( B_1 ( 0 ) \cap \{ u_i = t \} \cap G_i \cap T^{ - 1 } ( x ) ) \, d \mathcal{ H }^{ n - 1 } ( x ) d t. 
		\end{split}
	\end{equation}
	Therefore, noting that the number of elements of $ B_1 ( 0 ) \cap \{ u_i = t \} \cap G_i \cap T^{ - 1 } ( x ) $ is less than $ N - 1 $ for all $ x \in B_1^{ n - 1 } ( 0 ) $ and all $ t \in ( - 1 , 1 ) $ and thanks to \eqref{eq: 1}-\eqref{eq: 3}, we obtain
	\begin{equation*}
		\begin{split}
			\omega_{ n - 1 } \theta
			&= \lV \theta \, \lv T \rv \rV ( B_1 ( 0 ) )
			= \lim_{ i \to \infty } \bigg( \int_{ B_1 ( 0 ) \cap \{ \lv u_i \rv < 1 \} } e_{ \varepsilon_i } \bigg)\\
			&= \lim_{ i \to \infty } \int_{ - 1 }^{ 1 } \int_{ \{ x_n = 0 \} } 2 \mathcal{ H }^0 ( B_1 ( 0 ) \cap \{ u_i = t \} \cap G_i \cap T^{ - 1 } ( x ) ) \, d \mathcal{ H }^{ n - 1 } ( x ) d t\\
			&\leq 4 \omega_{ n - 1 } ( N - 1 ).
		\end{split}
	\end{equation*}
	Since $ N $ is the smallest positive integer greater than $ \theta / 4 $, $ \theta = 4 ( N - 1 ) $ holds. This completes the proof.
\end{proof}

\section{Proofs of main results}
\label{sec: proofs of main}
\subsection{Proof of Theorem \ref{thm: main-euclidean}}
    We collect the results above to complete the proof of Theorem \ref{thm: main-euclidean}. (1) is shown in Proposition \ref{prop: rec}. (2) is precisely Corollary \ref{cor: uniform}. (3) follows form a contradiction argument from Proposition \ref{prop: density} and Corollary \ref{cor: uniform}. We finally prove (4). Let us use the same notation as in the proof of Theorem \ref{thm: integrality}. We note that $ u_i $ converges locally uniformly to $ + 1 $ on one side of $ T $ and $ - 1 $ on the other side at $ \mathcal{ H }^{ n - 1 } $-a.e.\,$ x \in \partial^* \{ u_0 = 1 \} $, and to the same value for $ \mathcal{ H }^{ n - 1 } $-a.e.\,$ x \in \Omega \setminus \partial^* \{ u_0 = 1 \} $. We also note that we may choose $ x_i \in B_1^{ n - 1 } ( 0 ) $ and $ t \in ( - 1 , 1 ) $ such that $ x_i \in T ( \{ \lv u_i \rv < 1 \} \cap ( B_2 ( 0 ) \setminus G_i ) ) $ and $ T^{ - 1 } ( x_i ) \cap G_i \cap \{ u_i = t \} $ has precisely $ N - 1 $ elements. We thus see that $ T^{ - 1 } ( x_i ) \cap \{ u_i = t \} $ has $ N - 1 $ elements. This observation immediately implies that the density is either odd or even, depending on the sign of $ u_i $ away from $ T $, and this distinction corresponds to whether the origin is in $ \partial^* \{ u_0 = 1 \} $ or not.

\subsection{Convergence of Minimizers: \texorpdfstring{$\Gamma$}{}-convergence}\label{subsec: Gamma convergence}
In this subsection, we show that the minimizers of $ J_\varepsilon $ converge to a minimizer of $ J_0 $ defined by
\[
    J_0 ( u ) := \begin{cases}
        2 \int_\Omega \lv \nabla u \rv &(\text{if } u \in BV ( \Omega ; \{ \pm 1 \} ) ),\\
        + \infty &(\text{otherwise}),
    \end{cases}
\]
which is the area functional in $\Omega$. To this end, we prove Theorem \ref{thm: Gamma convergence}, well known as the $ \Gamma $-convergence by De Giorgi. Once Theorem \ref{thm: Gamma convergence} is obtained, the general theory of the $ \Gamma $-convergence allows to reach the desired conclusion.

In order to prove Theorem \ref{thm: Gamma convergence}, we need the following lemma coming from the general facts of the set of finite perimeter (see \cite[Lemma 1 and Lemma 2]{sternberg1988effect}, for example).

\begin{lemma}
\label{lem: smooth appro BV}
    Let $ \Omega \subset \mathbb{R}^n$ be an bounded open set with smooth boundary. Then the following hold.
    \begin{enumerate}
        \item Let $ A \subset \Omega $ be a set of finite perimeter in $ \Omega $ with $ 0 < \lv A \rv < \lv \Omega \rv $. Then there exists a sequence of open sets $ \{ A_k \}_{ k \in \N } $ satisfying the following:
        \begin{enumerate}
            \item $ \partial A_k \cap \Omega $ is $ C^2 $ for all $ k $,
            \item $ \lim_{ k \to \infty } \lV \chi_{ A_k \cap \Omega } - \chi_A \rV_{ L^1 ( \Omega ) } = 0 $,
            \item $ \lim_{ k \to \infty } \mathcal{ H }^{ n - 1 } ( \partial A_k ) = \mathcal{ H }^{ n - 1 } ( \partial^* A ) $,
            \item $ \mathcal{ H }^{ n - 1 } ( \partial A_k \cap \partial \Omega ) = 0 $ for all $ k $.
        \end{enumerate}
        \item Let $ A \subset \Omega $ be an open set with $ C^2 $, compact, nonempty boundary such that
        \[
            \mathcal{ H }^{ n - 1 } ( \partial A \cap \partial \Omega ) = 0.
        \]
        Define the signed distance function of $ \partial A $ by
        \[
            d_{ \partial A } ( x ) := \begin{cases}
                \dist ( x , \partial A ) &( x \in \Omega \setminus A ),\\
                - \dist ( x , \partial A ) &( x \in A \cap \Omega ).
            \end{cases}
        \]
        Then, for sufficiently small $ \delta > 0 $, $ d $ is a $ C^2 $ function in $ \{ \lv d \rv < \delta \} $ with $ \lv \nabla d \rv = 1 $ and we have
        \begin{equation}
        \label{eq: convergence of sd level area}
            \lim_{ \delta \to 0 } \mathcal{ H }^{ n - 1 } ( \{ d_{ \partial A } = \delta  \} ) = \mathcal{ H }^{ n - 1 } ( \partial A ).
        \end{equation}
    \end{enumerate}
\end{lemma}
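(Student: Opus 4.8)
This lemma collects two classical facts about sets of finite perimeter (cf.\ \cite{sternberg1988effect}), and I would prove them separately. The plan is: for (1), mollify $\chi_A$ and slice the mollifications by the coarea formula, using lower semicontinuity of perimeter together with Sard's theorem; for (2), pass to normal coordinates around the compact $C^2$ hypersurface $\partial A$ and compute directly. The only step that requires care is in (1): I want the \emph{equality} of perimeters in (1)(c) together with the boundary condition (1)(d), not merely the lower semicontinuity inequality. I obtain the former by forcing equality in Fatou's lemma applied to the coarea identity, and the latter by a second application of Sard's theorem, this time to the restrictions of the mollifications to $\partial\Omega$.

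\emph{Item (1).} After a preliminary reduction (enlarge $\Omega$ to a smooth domain $\Omega'\supset\overline{\Omega}$ and extend $A$ across $\partial\Omega$ in a collar by reflection, so that the extended set $A'\subset\Omega'$ carries no perimeter on $\partial\Omega$), set $u_\delta:=\chi_{A'}*\rho_\delta$ for a standard mollifier $\rho_\delta$. Then $u_\delta\in C^\infty(\R^n)$, $0\le u_\delta\le 1$, $u_\delta\to\chi_A$ in $L^1(\Omega)$, and $\int_\Omega|\nabla u_\delta|\,dx\to|D\chi_A|(\Omega)=\mathcal{H}^{n-1}(\partial^*A)=:P$. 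By the coarea formula, $\int_0^1\mathcal{H}^{n-1}(\{u_\delta=t\}\cap\Omega)\,dt=\int_\Omega|\nabla u_\delta|\,dx$. For each fixed $t\in(0,1)$ one has $\chi_{\{u_\delta>t\}}\to\chi_A$ in $L^1(\Omega)$ (Chebyshev's inequality), so lower semicontinuity of perimeter gives $\liminf_{\delta\to0}\mathcal{H}^{n-1}(\{u_\delta=t\}\cap\Omega)\ge P$; integrating this inequality in $t$ and comparing with the coarea identity through Fatou's lemma forces
\[
\liminf_{\delta\to0}\mathcal{H}^{n-1}(\{u_\delta=t\}\cap\Omega)=P\qquad\text{for a.e.\ }t\in(0,1).
\]
By Sard's theorem applied to $u_\delta$ and to $u_\delta|_{\partial\Omega}$ (a smooth function on the $(n-1)$-manifold $\partial\Omega$), one may then fix $t\in(0,1)$ and a sequence $\delta_k\to0$ realizing the equality above such that, for every $k$, $t$ is a regular value of both $u_{\delta_k}$ and $u_{\delta_k}|_{\partial\Omega}$. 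Set $A_k:=\{u_{\delta_k}>t\}\cap\Omega$. Then $A_k$ is open and $\partial A_k\cap\Omega=\{u_{\delta_k}=t\}\cap\Omega$ is a $C^\infty$ hypersurface, which is (a); $\partial A_k\cap\partial\Omega\subseteq\{u_{\delta_k}=t\}\cap\partial\Omega$ is a $C^\infty$ $(n-2)$-submanifold of $\partial\Omega$, hence $\mathcal{H}^{n-1}$-null, which is (d); $\chi_{A_k}\to\chi_A$ in $L^1(\Omega)$ because $t\in(0,1)$ is fixed, which is (b); and $\mathcal{H}^{n-1}(\partial A_k)=\mathcal{H}^{n-1}(\{u_{\delta_k}=t\}\cap\Omega)\to P=\mathcal{H}^{n-1}(\partial^*A)$ by the choice of $t$ and $\delta_k$, which is (c).

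\emph{Item (2).} Since $\partial A$ is a compact embedded $C^2$ hypersurface with $C^1$ unit normal field $\nu$, the normal map $\Phi(y,s):=y+s\,\nu(y)$ is a $C^1$ diffeomorphism of $\partial A\times(-\delta_0,\delta_0)$ onto the tubular neighbourhood $\{|d_{\partial A}|<\delta_0\}$ for some $\delta_0>0$, on which $d_{\partial A}(\Phi(y,s))=s$; this gives $|\nabla d_{\partial A}|=1$ immediately, while the $C^2$ regularity of $d_{\partial A}$ in $\{|d_{\partial A}|<\delta_0\}$ is the classical regularity of the distance function to a $C^2$ hypersurface. For $0<\delta<\delta_0$ one has $\{d_{\partial A}=\delta\}=\Phi(\cdot,\delta)(\partial A)$, and the tangential Jacobian of $y\mapsto y+\delta\,\nu(y)$ on $\partial A$ equals $\prod_{i=1}^{n-1}(1+\delta\kappa_i(y))$, where $\kappa_1(y),\dots,\kappa_{n-1}(y)$ are the principal curvatures of $\partial A$, which are continuous and hence uniformly bounded on the compact surface $\partial A$. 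By the area formula (see \cite{simon1983lectures}),
\[
\mathcal{H}^{n-1}(\{d_{\partial A}=\delta\})=\int_{\partial A}\prod_{i=1}^{n-1}(1+\delta\kappa_i(y))\,d\mathcal{H}^{n-1}(y)\xrightarrow[\delta\to0]{}\mathcal{H}^{n-1}(\partial A),
\]
by dominated convergence, which is \eqref{eq: convergence of sd level area}; here the hypothesis $\mathcal{H}^{n-1}(\partial A\cap\partial\Omega)=0$ serves only to identify $\mathcal{H}^{n-1}(\partial A)$ with the perimeter of $A$ relative to $\Omega$ and plays no role in the local computation. I expect item (2) to be routine; the genuine (if modest) obstacle is the joint control of (1)(c) and (1)(d), where forcing equality in Fatou's lemma rather than settling for lower semicontinuity is the decisive point.
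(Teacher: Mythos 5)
The paper does not prove this lemma at all; it is quoted directly from Sternberg (\cite[Lemma 1 and Lemma 2]{sternberg1988effect}). Your outline --- mollify, use the coarea identity plus Fatou to upgrade lower semicontinuity of perimeter to equality for a.e.\ level, apply Sard's theorem both to $u_\delta$ and to $u_\delta|_{\partial\Omega}$ to select a good level, and use normal coordinates and the area formula for item~(2) --- is precisely the standard route that underlies that reference, and item~(2) is fine as written.

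There is, however, a genuine error in your treatment of item~(1). With $A_k := \{u_{\delta_k} > t\} \cap \Omega$, the assertion ``$\partial A_k \cap \partial\Omega \subseteq \{u_{\delta_k} = t\} \cap \partial\Omega$'' is false. Take any $x \in \partial\Omega$ with $u_{\delta_k}(x) > t$. Since $\{u_{\delta_k} > t\}$ is an open neighbourhood of $x$ and $x \in \partial\Omega$, there are points of $\Omega$ arbitrarily close to $x$ lying in $\{u_{\delta_k} > t\}$, so $x \in \overline{A_k}$; yet $x \notin A_k$ because $x \notin \Omega$. Hence $\{u_{\delta_k} > t\} \cap \partial\Omega \subseteq \partial A_k \cap \partial\Omega$, which is generically an open subset of $\partial\Omega$ with positive $\mathcal{H}^{n-1}$-measure: conclusion (d) fails for your choice. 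The fix is simply not to intersect with $\Omega$: take $A_k := \{u_{\delta_k} > t\}$ as an open subset of the enlarged domain $\Omega'$. Then, for $t$ a common regular value of $u_{\delta_k}$ and $u_{\delta_k}|_{\partial\Omega}$, $\partial A_k = \{u_{\delta_k} = t\}$ is a smooth hypersurface in $\Omega'$, so (a) holds, and $\partial A_k \cap \partial\Omega = \{u_{\delta_k} = t\} \cap \partial\Omega$ is a smooth $(n-2)$-dimensional submanifold of $\partial\Omega$, hence $\mathcal{H}^{n-1}$-null, giving (d). With this choice (c) should be read as $\mathcal{H}^{n-1}(\partial A_k \cap \Omega) \to \mathcal{H}^{n-1}(\partial^*A)$, i.e.\ convergence of the perimeter relative to $\Omega$ (as in Sternberg's formulation), since the portion of $\{u_{\delta_k} = t\}$ outside $\overline{\Omega}$ carries the perimeter of the reflected extension $A'$ and does not vanish. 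One further organizational point: fix a null sequence $\delta_j$ first, apply Sard countably many times together with the Fatou argument along that sequence to produce a single $t$ that is simultaneously a regular value for all $j$ and achieves $\liminf_j \mathcal{H}^{n-1}(\{u_{\delta_j}=t\}\cap\Omega) = P$, and only then pass to a further subsequence realizing the $\liminf$; as written, the order of quantifiers in ``fix $t$ and a sequence $\delta_k$'' is backwards.
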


\begin{proof}[Proof of Theorem \ref{thm: Gamma convergence}]
(1) follows from the definition of $J_0$, Assumption \ref{assm: energy} and \eqref{eq: AMGM for J}.

We prove (2). Take $ u \in L^1 ( \Omega ; [ - 1 , 1 ] ) $, and write $ u = 2 \chi_A - 1 $ for some set of finite perimeter $ A \subset \Omega $. If $u$ is of other form, then $J_0(u)=+\infty$ from the definition, and the claim is trivial. It is sufficient to prove that there exists a sequence $ \{ u_{ \varepsilon_i } \}_{ i \in \N } \subset W^{ 1 , 2 } ( \Omega ; [ - 1 , 1 ] ) $ such that
\begin{equation}
\label{eq: desired conculusion (2)}
    \lim_{ i \to \infty } \lV u_{ \varepsilon_i } - u \rV_{ L^1 ( \Omega ) } = 0 \text{ and } 2 \int_\Omega \lv \nabla u \rv = 4 \mathcal{ H }^{ n - 1 } ( \partial^* A ) = \lim_{ i \to \infty } J_{\varepsilon_i} ( u_{ \varepsilon_i } ),
\end{equation}
for some $ \{ \varepsilon_i \}_{ i \in \N } $ with $ \varepsilon_i = 0 $ as $ i \to \infty $. If $ \lv A \rv = \lv \Omega \rv $ or $ 0 $, then taking $ u_{ \varepsilon } = 1 $ or $ - 1 $ is sufficient. Thus, by Lemma \ref{lem: smooth appro BV} (1) and the diagonal argument, it is sufficient to prove \eqref{eq: desired conculusion (2)} when $ \partial A $ is $ C^2 $ taking a subsequence if necessary.

For $ \partial A $, we define $ u_{ \varepsilon } $ by
\[
    u_{ \varepsilon } ( x ) := \begin{cases}
        \varepsilon^{ - 1 } d_{ \partial A } ( x ) &( x \in \{ \lv d_{ \partial A } \rv < \varepsilon \} ),\\
        - 1 &( x \in \{ d_{ \partial A } \geq \varepsilon \} ),\\
        1 &( x \in \{ d_{ \partial A } \leq - \varepsilon \} ).
    \end{cases}
\]
Note that, for sufficiently small $ \varepsilon > 0 $, $ u_{ \varepsilon } $ is a Lipshitz function and $ \lv \nabla d_{ \partial A } \rv = 1 $ in $ \{ \lv d_{ \partial A } \rv < \varepsilon \} $. Therefore, by the coarea formula, we have
\begin{equation*}
    \begin{split}
        J_\varepsilon ( u_{ \varepsilon } ) &= \int_\Omega \varepsilon \lv \nabla u_{ \varepsilon } \rv^2 + \frac{ \chi_{ ( - 1 , 1 ) } ( u_{ \varepsilon } ) }{ \varepsilon } = \int_{ - \varepsilon }^{ \varepsilon } \int_{ \{ d_{ \partial A } = t \} } \frac{ 2 }{ \varepsilon } \, d \mathcal{ H }^{ n - 1 } d t\\
        &= 2 \int_{ - 1 }^1 \mathcal{ H }^{ n - 1 } ( \{ d_{ \partial A } = \varepsilon t \} ) \, d t.
    \end{split}
\end{equation*}
By \eqref{eq: convergence of sd level area} and the dominated convergence theorem, we have
\[
    \lim_{ \varepsilon \to 0 } J_\varepsilon ( u_{ \varepsilon } ) = 4 \mathcal{ H }^{ n - 1 } ( \partial A ).
\]
By the dominated convergence theorem again, we also have $ \lim_{ \varepsilon \to + 0 } \lV u_{ \varepsilon } - u \rV_{ L^1 ( \Omega ) } = 0 $. Hence we obtain \eqref{eq: desired conculusion (2)} and this completes the proof.
\end{proof}

\begin{proof}[Proof of Theorem \ref{thm: min to min}]
	If $ u_{ \varepsilon_i } $ converges to either $1$ or $-1$ uniformly, the conclusion is trivial. Thus, we may assume that $ u_{ \varepsilon_i } $ does not converge to a constant function $ + 1 $ or $ - 1 $ uniformly.
	
	First, we show that $ 4^{ - 1 } \lV V \rV = \lv \partial \{ u_0 = 1 \} \rv $. Let $ x \in \spt \lV V \rV \cap \Omega $ be a point with an approximate tangent plane. We also denote $V_i$ to be varifold associated to $u_{\varepsilon_i}$, and $\theta$ the density of $V$. Consider suitable rescaling and translations so that $ x = 0 $, $ V_i \to \theta \lv T \rv $ and $ T = \R^{ n - 1 } \times \{ 0 \} $. Since $ u_{ \varepsilon_i } $ does not converge to a constant function $ \pm 1 $ uniformly, we see that $ \{ \lv u_{ \varepsilon_i } \rv < 1 \} $ has at least one connected component. By Corollary \ref{cor: uniform}, $ u_{ \varepsilon_i } $ converges locally uniformly to either $ \pm 1 $ on $ \{ x_n > 0 \} $ and $ \{ x_n < 0 \} $. If $ u_{ \varepsilon_i } $ converges to $ + 1 $ in both $ \{ x_n > 0 \} $ and $ \{ x_n < 0 \} $, then one can compare $ u_{ \varepsilon_i } $ to $ + 1 $ on $ B_1 ( 0 ) $ and reduce the energy $J_\varepsilon$ by a definite amount, which would be a contradiction to the local energy minimality of $ u_{ \varepsilon_i } $. Hence, $ u_{ \varepsilon_i } $ converges to $ + 1 $ on one side and $ - 1 $ on the other side, which implies that $ \spt \lV V \rV = \spt \lv \partial \{ u_0 = 1 \} \rv $. Therefore, by Proposition \ref{prop: flatness}, we have $ \theta = 1 $, which completes the claim.
	
	We next use a contradiction argument for the minimality of $ u_0 $. Suppose that there exists a function $ \tilde{ u } \in BV ( \Omega ; \{ \pm 1 \} ) $ such that $ \int_\Omega \lv u_0 - \tilde{ u } \rv < c $ and $ J_0 ( \tilde{ u } ) < J_0 ( u_0 ) $. By Theorem \ref{thm: Gamma convergence} (2), there exists a sequence $ \{ \tilde{ u }_{ \varepsilon_i } \} \subset W^{ 1 , 2 } ( \Omega ; [ - 1 , 1 ] ) $ such that it follows that
	\begin{equation*}
	\label{eq: a}
		\lim_{ i \to \infty } \lV \tilde{ u }_{ \varepsilon_i } - \tilde{ u } \lV_{ L^1 ( \Omega ) } = 0 \text{ and } \limsup_{ i \to \infty } J_{\varepsilon_i} ( \tilde{ u }_{ \varepsilon_i } ) \leq J_0 ( \tilde{ u } ). 
	\end{equation*}
	Note that, since
	\[
		\int_\Omega \lv u_{ \varepsilon_i } - \tilde{ u }_{ \varepsilon_i } \rv \leq \int_\Omega \lv u_{ \varepsilon_i } - u_0 \rv + \lv u_0 - \tilde{ u } \rv + \lv \tilde{ u } - \tilde{ u }_{ \varepsilon_i } \rv
	\]
	holds, we have $ \int_\Omega \lv u_{ \varepsilon_i } - \tilde{ u }_{ \varepsilon_i } \rv < c $ for sufficiently large $ i $. By the local minimality of $ u_{ \varepsilon_i } $,
	\begin{equation*}
	\label{eq: b}
		J_{\varepsilon_i} ( u_{ \varepsilon_i } ) \leq J_{\varepsilon_i} ( \tilde{ u }_{ \varepsilon_i } )
	\end{equation*}
	holds for sufficiently large $ i $. From Theorem \ref{thm: Gamma convergence} (1), it follows that
	\begin{equation*}
		J_0 ( u_0 ) \leq \liminf_{ i \to \infty } J_{\varepsilon_i} ( u_{ \varepsilon_i } ).
	\end{equation*}
	The above equations lead to a contradiction and therefore this completes the proof.
\end{proof}

\appendix

\section{Interpolation between \texorpdfstring{$ L^1 $}{} and Lip.}
\label{app: classical}

\begin{prop}
\label{prop: interpolation L1 Lip}
	Let $ u : B_1 ( 0 ) \to \R $ be a Lipshitz function. Then
	\[
		\lV u \rV_{ L^{ \infty } ( B_R ( 0 ) ) } \leq C \max\Big\{ \lV u \rV_{ L^1 ( B_1 ( 0 ) ) }^{ 1 / ( n + 1 ) }\lV \nabla u \rV_{ L^{ \infty } ( B_1 ( 0 ) ) }^{ n / ( n + 1 ) }, \lV u \rV_{ L^1 ( B_1 ( 0 ) ) } \Big\}
	\]
	for some $ C > 0 $ depending only on $ n $.
\end{prop}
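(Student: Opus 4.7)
The plan is to use the classical chaining argument that converts a pointwise supremum into an $L^1$ lower bound on a ball of definite radius via Lipschitz continuity. Set $ M := \lV u \rV_{ L^\infty ( B_R ( 0 ) ) } $ and $ L := \lV \nabla u \rV_{ L^\infty ( B_1 ( 0 ) ) } $; we may assume both are positive, otherwise the conclusion is trivial. Pick $ x_0 \in \overline{ B_R ( 0 ) } $ with $ \lv u ( x_0 ) \rv \geq M / 2 $.

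By the Lipschitz estimate, for every $ x \in B_1 ( 0 ) $ we have
\[
    \lv u ( x ) \rv \geq \lv u ( x_0 ) \rv - L \lv x - x_0 \rv \geq \frac{ M }{ 2 } - L \lv x - x_0 \rv ,
\]
hence $ \lv u ( x ) \rv \geq M / 4 $ on $ B_{ M / ( 4 L ) } ( x_0 ) \cap B_1 ( 0 ) $. To exploit this, set $ r^* := \min \{ M / ( 4 L ) , ( 1 - R ) / 2 \} $ so that $ B_{ r^* } ( x_0 ) \subset B_1 ( 0 ) $ (using $ x_0 \in \overline{ B_R } $). Integrating the lower bound gives
\[
    \lV u \rV_{ L^1 ( B_1 ( 0 ) ) } \geq \int_{ B_{ r^* } ( x_0 ) } \lv u \rv \geq \frac{ M }{ 4 } \omega_n ( r^* )^n .
\]

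Now distinguish two cases according to which term realizes the minimum in the definition of $ r^* $. If $ r^* = M / ( 4 L ) $, then rearranging yields $ M^{ n + 1 } \leq C ( n ) L^n \lV u \rV_{ L^1 } $, i.e. $ M \leq C ( n ) L^{ n / ( n + 1 ) } \lV u \rV_{ L^1 }^{ 1 / ( n + 1 ) } $. If instead $ r^* = ( 1 - R ) / 2 $ is a positive constant, then $ M \leq C ( n , R ) \lV u \rV_{ L^1 } $. Taking the maximum of the two estimates produces the stated inequality.

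There is no real obstacle here, as the argument is essentially a one-line balancing optimization; the only subtle point is ensuring that the auxiliary ball $ B_{ r^* } ( x_0 ) $ actually sits inside $ B_1 ( 0 ) $, and this geometric constraint is precisely what forces the two-term structure of the $ \max $ on the right-hand side. Note also that the proof explains the exponent $ 1 / ( n + 1 ) $: it arises from $ M \cdot M^n / L^n \sim \lV u \rV_{ L^1 } $ by volume scaling of the ball of radius $ M / L $.
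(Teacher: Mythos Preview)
Your argument is essentially the same as the paper's: pick a near-maximizer $x_0$, use the Lipschitz bound to control $|u|$ from below on a small ball, and integrate. The only difference is how the boundary issue is handled. You take $x_0\in\overline{B_R(0)}$ and shrink to $r^*=\min\{M/(4L),(1-R)/2\}$ so that $B_{r^*}(x_0)\subset B_1(0)$; this makes your constant depend on $R$ in the second case, whereas the statement asserts $C=C(n)$ only. The paper instead takes $x_0\in B_1(0)$, sets $r=\min\{\lV u\rV_{L^\infty}/(16L),\,1/4\}$, and uses the elementary geometric fact that $\mathcal L^n\big(B_1(0)\cap B_r(x_0)\big)\ge c(n)\,r^n$ for $x_0\in B_1(0)$ and $r\le 1/4$, which removes any $R$-dependence (and indeed the $B_R$ in the statement appears to be a typo for $B_1$, consistent with how the proposition is applied in the paper). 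With that small adjustment your proof matches the paper's.
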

\begin{proof}
	Let $ x_0 \in B_1 ( 0 ) $ be such that $ \lv u ( x_0 ) \rv \geq \lV u \rV_{ L^{ \infty } ( B_1 ( 0 ) ) } / 2 $. We then have
    \[
        \lv u ( x ) \rv \geq \frac{ \lV u \rV_{ L^{ \infty } ( B_1 ( 0 ) ) } }{ 2 } - \lV \nabla u \rV_{ L^{ \infty } ( B_1 ( 0 ) ) } \lv x - x_0 \rv.
    \]    
    Therefore, letting
    \[
        r = \min\bigg\{ \frac{ \lV u \rV_{ L^{ \infty } ( B_1 ( 0 ) ) } }{ 16 \lV \nabla u \rV_{ L^{ \infty } ( B_1 ( 0 ) ) } }, \frac{ 1 }{ 4 } \bigg\},
    \]
    we have $ \lv u \rv \geq \lV u \rV_{ L^{ \infty } ( B_1 ( 0 ) ) } / 4 $ in $ B_r ( x_0 ) $. Hence, we obtain
	\begin{equation*}
    \begin{split}
		\int_{ B_1 ( 0 ) } \lv u \rv &\geq \int_{ B_1 ( 0 ) \cap B_r ( x_0 ) } \lv u \rv \geq \frac{ \lV u \rV_{ L^{ \infty } ( B_1 ( 0 ) ) } }{ 4 } \mathcal{ L }^n ( B_1 ( 0 ) \cap B_r ( x_0 ) )\\
        &\geq C \min\bigg\{\frac{ \lV u \rV_{ L^{ \infty } ( B_1 ( 0 ) ) }^{ n + 1 } }{ \lV \nabla u \rV_{ L^{ \infty } ( B_1 ( 0 ) ) }^n }, \frac{ 1 }{ 4^n } \lV u \rV_{ L^{ \infty } ( B_1 ( 0 ) ) } \bigg\}.
    \end{split}
	\end{equation*}
	This completes the proof.
\end{proof}

\section*{Acknowledgement}

The first author thanks Dr. Hardy Chan for careful advices. The first author have received funding from the Swiss National Science Foundation
under Grant PZ00P2\_202012/1.
The second author acknowledges the support of the Grand-in-Aid for JSPS Fellows, Grant number 25KJ1245.

\bibliography{FBAC.bib}

\begin{thebibliography}{10}

\bibitem{allard1972first}
W.~K. Allard.
\newblock On the {F}irst {V}ariation of a {V}arifold.
\newblock {\em Ann. Math.}, 95(3):417--491, (1972).

\bibitem{allen1972ground}
S.~M. Allen and J.~W. Cahn.
\newblock Ground {S}tate {S}tructures in {O}rdered {B}inary {A}lloys with
  {S}econd {N}eighbor {I}nteractions.
\newblock {\em Acta Metall.}, 20(3):423--433, (1972).

\bibitem{allen1973correction}
S.~M. Allen and J.~W. Cahn.
\newblock A {C}orrection to the {G}round {S}tate of {FCC} {B}inary {O}rdered
  {A}lloys with {F}irst and {S}econd {N}eighbor {P}airwise {I}nteractions.
\newblock {\em Scr. Metall.}, 7(12):1261--1264, (1973).

\bibitem{an2025second}
J.~An.
\newblock Second {O}rder {E}stimates for a {F}ree {B}oundary {P}hase
  {T}ransition.
\newblock {\em arXiv preprint, arXiv:2507.03810}, (2025).

\bibitem{caffarelli1995uniform}
L.~A. Caffarelli and A.~C{\'o}rdoba.
\newblock Uniform {C}onvergence of a {S}ingular {P}erturbation {P}roblem.
\newblock {\em Commun. Pure Appl. Math.}, 48(1):1--12, (1995).

\bibitem{caffarelli2005geometric}
L.~A. Caffarelli and S.~Salsa.
\newblock {\em A {G}eometric {A}pproach to {F}ree {B}oundary {P}roblems},
  volume~68 of {\em Grad. Stud. Math.}
\newblock American Mathematical Soc., Providence, RI, 2005.

\bibitem{chan2025global}
H.~Chan, X.~Fern{\'a}ndez-Real, A.~Figalli, and J.~Serra.
\newblock Global {S}table {S}olutions to the {F}ree {B}oundary {A}llen--{C}ahn
  and {B}ernoulli {P}roblems in 3{D} are {O}ne-{D}imensional.
\newblock {\em arXiv preprint, arXiv:2503.21245}, (2025).

\bibitem{chodosh2020minimal}
O.~Chodosh and C.~Mantoulidis.
\newblock Minimal {S}urfaces and the {A}llen--{C}ahn {E}quation on 3-manifolds:
  {I}ndex, {M}ultiplicity, and {C}urvature {E}stimates.
\newblock {\em Ann. Math.}, 191(1):213--328, (2020).

\bibitem{de2009existence}
D.~De~Silva.
\newblock Existence and {R}egularity of {M}onotone {S}olutions to a {F}ree
  {B}oundary {P}roblem.
\newblock {\em Am. J. Math.}, 131(2):351--378, (2009).

\bibitem{du2022four}
Z.~Du, C.~Gui, and K.~Wang.
\newblock Four {E}nd {S}olutions of a {F}ree {B}oundary {P}roblem.
\newblock {\em Adv. Math.}, 404(108395), (2022).

\bibitem{evans2015measure}
L.~C. Evans and R.~F. Gariepy.
\newblock {\em Measure {T}heory and {F}ine {P}roperties of {F}unctions}.
\newblock Textb. Math. CRC Press, Boca Raton, FL, revised edition, 2015.

\bibitem{gaspar2018allen}
P.~Gaspar and M.~A. Guaraco.
\newblock The {A}llen--{C}ahn {E}quation on {C}losed {M}anifolds.
\newblock {\em Cal. Var. Partial Differ. Equ.}, 57(4):101, (2018).

\bibitem{guaraco2018min}
M.~A. Guaraco.
\newblock Min--max for {P}hase {T}ransitions and the {E}xistence of {E}mbedded
  {M}inimal {H}ypersurfaces.
\newblock {\em J. Differ. Geom.}, 108(1):91--133, (2018).

\bibitem{hutchinson2000convergence}
J.~E. Hutchinson and Y.~Tonegawa.
\newblock Convergence of {P}hase {I}nterfaces in the {V}an der
  {W}aals-{C}ahn-{H}illiard {T}heory.
\newblock {\em Cal. Var. Partial Differ. Equ.}, 10(1):49--84, (2000).

\bibitem{irie2018density}
K.~Irie, F.~C. Marques, and A.~Neves.
\newblock Density of {M}inimal {H}ypersurfaces for {G}eneric {M}etrics.
\newblock {\em Ann. Math.}, 187(3):963--972, (2018).

\bibitem{kamburov2013free}
N.~Kamburov.
\newblock A {F}ree {B}oundary {P}roblem {I}nspired by a {C}onjecture of {D}e
  {G}iorgi.
\newblock {\em Commun. Partial Differ. Equations}, 38(3):477--528, (2013).

\bibitem{liu2018free}
Y.~Liu, K.~Wang, and J.~Wei.
\newblock On a {F}ree {B}oundary {P}roblem and {M}inimal {S}urfaces.
\newblock In {\em Annales de l'IHP Analyse non lin{\'e}aire}, volume~35, pages
  993--1017, (2018).

\bibitem{modica1985gradient}
L.~Modica.
\newblock A {G}radient {B}ound and a {L}iouville {T}heorem for {N}onlinear
  {P}oisson {E}quations.
\newblock {\em Commun. Pure Appl. Math.}, 38(5):679--684, (1985).

\bibitem{modica1987gradient}
L.~Modica.
\newblock The {G}radient {T}heory of {P}hase {T}ransitions and the {M}inimal
  {I}nterface {C}riterion.
\newblock {\em Arch. Ration. Mech. Anal.}, 98(2):123--142, (1987).

\bibitem{mortola1977esempio}
L.~Modica and S.~Mortola.
\newblock Un {E}sempio di {$\Gamma$}-{C}onvergenza.
\newblock {\em Boll. Un. Mat. Ital. B}, 14:285--299, (1977).

\bibitem{simon1983lectures}
L.~Simon.
\newblock {\em Lectures on {G}eometric {M}easure {T}heory}, volume~3 of {\em
  Proc. Cent. Math. Anal. Aust. Natl. Univ.}
\newblock Australian National University, Canberra, 1983.

\bibitem{song2023existence}
A.~Song.
\newblock Existence of {I}nfinitely {M}any {M}inimal {H}ypersurfaces in
  {C}losed {M}anifolds.
\newblock {\em Ann. Math.}, 197(3):859--895, (2023).

\bibitem{sternberg1988effect}
P.~Sternberg.
\newblock The {E}ffect of a {S}ingular {P}erturbation on {N}onconvex
  {V}ariational {P}roblems.
\newblock {\em Arch. Ration. Mech. Anal.}, 101(3):209--260, (1988).

\bibitem{valdinoci2004plane}
E.~Valdinoci.
\newblock Plane-like {M}inimizers in {P}eriodic {M}edia: {J}et {F}lows and
  {G}inzburg-{L}andau-type {F}unctionals.
\newblock {\em J. Reine Angew. Math.}, 2004(574):147--185, (2004).

\bibitem{valdinoci2006flatness}
E.~Valdinoci.
\newblock Flatness of {B}ernoulli {J}ets.
\newblock {\em Math. Z.}, 254(2):257--298, (2006).

\bibitem{wang2015structure}
K.~Wang.
\newblock The {S}tructure of {F}inite {M}orse {I}ndex {S}olutions to {T}wo
  {F}ree {B}oundary {P}roblems in {$\mathbb{R}^2$}.
\newblock {\em arXiv preprint, arXiv:1506.00491}, (2015).

\bibitem{wang2019finite}
K.~Wang and J.~Wei.
\newblock {F}inite {Morse} {I}ndex {I}mplies {F}inite {E}nds.
\newblock {\em Commun. Pure Appl. Math.}, 72(5):1044--1119, (2019).

\bibitem{wang2019second}
K.~Wang and J.~Wei.
\newblock Second {O}rder {E}stimate on {T}ransition {L}ayers.
\newblock {\em Adv. Math.}, 358(106856), (2019).

\bibitem{zhou2020multiplicity}
X.~Zhou.
\newblock On the {M}ultiplicity {O}ne {C}onjecture in {M}in-max {T}heory.
\newblock {\em Ann. Math.}, 192(3):767--820, (2020).

\end{thebibliography}
\bibliographystyle{abbrv}

\end{document}